\DeclarePairedDelimiter\abs{\lvert}{\rvert}%
\newsavebox{\@brx}
\newcommand{\llangle}[1][]{\savebox{\@brx}{\(\m@th{#1\langle}\)}%
  \mathopen{\copy\@brx\kern-0.5\wd\@brx\usebox{\@brx}}}
\newcommand{\rrangle}[1][]{\savebox{\@brx}{\(\m@th{#1\rangle}\)}%
  \mathclose{\copy\@brx\kern-0.5\wd\@brx\usebox{\@brx}}}
\begin{document}
%%%%%%%%%%%%%%%%%%%%%%%%%%%%%%%%%%%%%%%%%%%%%%%%%%%%%%%%%%%%%%%%%%%%%%%%
%%%%%%%%%%%%%%%%%%%%%%%%%%     Macros      %%%%%%%%%%%%%%%%%%%%%%%%%%%%%
%%%%%%%%%%%%%%%%%%%%%%%%%%%%%%%%%%%%%%%%%%%%%%%%%%%%%%%%%%%%%%%%%%%%%%%%
\def\e#1\e{\begin{equation}#1\end{equation}}
\def\ea#1\ea{\begin{align}#1\end{align}}
\def\eq#1{{\rm(\ref{#1})}}
\theoremstyle{plain}% default
\newtheorem{thm}{Theorem}[section]
\newtheorem{lem}[thm]{Lemma}
\newtheorem{prop}[thm]{Proposition}
\newtheorem{cor}[thm]{Corollary}
\theoremstyle{definition}
\newtheorem{dfn}[thm]{Definition}
\newtheorem{ex}[thm]{Example}
\newtheorem{rem}[thm]{Remark}
\newtheorem{conjecture}[thm]{Conjecture}
\newtheorem{convention}[thm]{Convention}
\newtheorem{assumption}[thm]{Assumption}
\newtheorem{notation}[thm]{Notation}

\newcommand{\D}{\mathrm{d}}
\newcommand{\A}{\mathcal{A}}
\newcommand{\LL}{\llangle[\Big]}
\newcommand{\RR}{\rrangle[\Big]}
\newcommand{\LD}{\Big\langle}
\newcommand{\RD}{\Big\rangle}
\newcommand{\F}{\mathcal{F}}
\newcommand{\HH}{\mathcal{H}}
\newcommand{\X}{\mathcal{X}}

\newcommand{\K}{\mathscr{K}}
\newcommand{\q}{\mathbf{q}}

%%%%%%%%%%%%%%%%%%%%%%%%%%%%
\newcommand{\op}{\operatorname}
\newcommand{\C}{\mathbb{C}}
\newcommand{\N}{\mathbb{N}}
\newcommand{\R}{\mathbb{R}}
\newcommand{\Q}{\mathbb{Q}}
\newcommand{\Z}{\mathbb{Z}}
\renewcommand{\H}{\mathbf{H}}
\newcommand{\PP}{\mathbb{P}}

\newcommand{\Etau}{\text{E}_\tau}
\newcommand{\E}{{\mathcal E}}
\newcommand{\G}{\mathbf{G}}
\newcommand{\eps}{\epsilon}

\newcommand{\im}{\op{im}}

\newcommand{\h}{\mathbf{h}}

\newcommand{\Gmax}[1]{G_{#1}}
\newcommand{\AW}{E}
%%%%%%%%%%%%%%%%%%%%%%         Functions         %%%%%%%%%%%%%%%%%%%%%%%%%%%%%%%%%%%
%\providecommand{\abs}[1]{\left\lvert#1\right\rvert}
%\providecommand{\norm}[1]{\left\lVert#1\right\rVert}
\newcommand{\abracket}[1]{\left\langle#1\right\rangle}
\newcommand{\bbracket}[1]{\left[#1\right]}
\newcommand{\fbracket}[1]{\left\{#1\right\}}
\newcommand{\bracket}[1]{\left(#1\right)}
\newcommand{\ket}[1]{|#1\rangle}
\newcommand{\bra}[1]{\langle#1|}

\newcommand{\ora}[1]{\overrightarrow#1}

\providecommand{\from}{\leftarrow}
\providecommand{\to}{\rightarrow}
\newcommand{\bl}{\textbf}
\newcommand{\mbf}{\mathbf}
\newcommand{\mbb}{\mathbb}
\newcommand{\mf}{\mathfrak}
\newcommand{\mc}{\mathcal}
\newcommand{\cinfty}{C^{\infty}}
\newcommand{\pa}{\partial}
\newcommand{\prm}{\prime}
\newcommand{\dbar}{\bar\pa}
\newcommand{\OO}{{\mathcal O}}
\newcommand{\hotimes}{\hat\otimes}
\newcommand{\BV}{Batalin-Vilkovisky }
\newcommand{\CE}{Chevalley-Eilenberg }
\newcommand{\suml}{\sum\limits}
\newcommand{\prodl}{\prod\limits}
\newcommand{\into}{\hookrightarrow}
\newcommand{\Ol}{\mathcal O_{loc}}
\newcommand{\mD}{{\mathcal D}}
\newcommand{\iso}{\cong}
\newcommand{\dpa}[1]{{\pa\over \pa #1}}
\newcommand{\Kahler}{K\"{a}hler }
\newcommand{\0}{\mathbf{0}}

\newcommand{\B}{\mathcal{B}}
\newcommand{\V}{\mathcal{V}}

\newcommand{\M}{\mathfrak{M}}

%%%%%%%%%%%%%%%%%%%%%%%%%%%%%
\newcommand{\DD}{\Omega^{\text{\Romannum{2}}}}

%%%%%%%%%%%%%%%%%%%%%%%%%%%

\numberwithin{equation}{section}

\newcommand{\comment}[1]{\textcolor{red}{[#1]}} %for displaying red texts
%\newcommand{\comment}[1]{} %for not displaying red texts
%\newcommand{\com}[2]{\textcolor{blue}{[#2]}} %for displaying red texts

%%%
\makeatletter
\newcommand{\subjclass}[2][2010]{%
  \let\@oldtitle\@title%
  \gdef\@title{\@oldtitle\footnotetext{#1 \emph{Mathematics Subject Classification.} #2}}%
}
\newcommand{\keywords}[1]{%
  \let\@@oldtitle\@title%
  \gdef\@title{\@@oldtitle\footnotetext{\emph{Key words and phrases.} #1.}}%
}
\makeatother
%%%

\makeatletter
\let\orig@afterheading\@afterheading
\def\@afterheading{%
   \@afterindenttrue
  \orig@afterheading}
\makeatother

\makeatletter
\newcommand*\bigcdot{\mathpalette\bigcdot@{.5}}
\newcommand*\bigcdot@[2]{\mathbin{\vcenter{\hbox{\scalebox{#2}{$\m@th#1\bullet$}}}}}
\makeatother

%%%%%%%%%%%%%%%%%%%%%%%%%%%%%%%%%%%%%%%%%%%%%%%%%%%%%%%%%%%%%%%%%%%%%%%%
%%%%%%%%%%%%%%%%%%%%%%%    Text of paper    %%%%%%%%%%%%%%%%%%%%%%%%%%%%
%%%%%%%%%%%%%%%%%%%%%%%%%%%%%%%%%%%%%%%%%%%%%%%%%%%%%%%%%%%%%%%%%%%%%%%%
%The following title can be changed according to needs.%
\title{\bf Gromov-Witten Theory of $A_n$ type quiver varieties and Seiberg Duality}
\author{Yingchun Zhang}
\date{}
%\subjclass{14N35, 11Fxx}
\maketitle
\begin{abstract}
Seiberg duality conjecture asserts that the Gromov-Witten theories (Gauged Linear Sigma Models) of two quiver varieties related by quiver mutations are equal via variable change. 
In this work, we prove this conjecture  for $A_n$ type quiver varieties.
 \end{abstract}

\setcounter{tocdepth}{2} \tableofcontents
\newpage
\section{Introduction}
Various dualities in physics have driven many mathematical developments in recent years. Mirror symmetry and LG/CY correspondence are two such examples.
A much less studied example in mathematics is the famous Seiberg duality which asserts the equivalence of gauge theories on quiver varieties. One can construct a new quiver variety by mutating an existing quiver.
Seiberg duality claims that the corresponding gauge theories in all dimensions are equivalent! Since gauge theories in different dimensions can be quite different in mathematics, this is a striking statement. To the author, its mathematical implication has not been studied nearly as much as those from other dualities. In this article, we will restrict ourselves to the 2d case where an excellent mathematical conjecture is available 
(see physical origin \cite{Hori,HoriTong,benini2015cluster,gomis2016m2} and  mathematical conjecture in \cite{nonabelianGLSM:YR}).

\subsection{Seiberg duality conjecture}
Suppose we have a quiver diagram $(Q_f\subset Q_0, Q_1, W)$, where $Q_0$ is the set of all nodes among which $Q_f$ is the set of frame nodes and $Q_0\backslash Q_f$ is the set of gauge nodes, $Q_1$ is the set of arrows, and $W$ is the potential. 
Denote an arrow between nodes $i$ and $j$ by $i\rightarrow j\in Q_1$, and denote the number of all arrows  $i\rightarrow j\in Q_1$ by $b_{ij}$. 
Assume there are no 1-cycle and 2-cycles, and such a quiver is called a cluster quiver. 
Let $\vec v=(v_i)_{i\in Q_0}$ be a collection of non-negative integers for each node $i\in Q_0$. Consider the affine variety algebraic variety $V=\bigoplus_{i\rightarrow j\in Q_1}Hom(\C^{N_i},\C^{N_j})$  and the connected linear reductive group $G=\prod_{i\in Q_0\backslash Q_f} GL(N_i)$.
For a choice of characters $\theta\in\chi(G)$, the associated quiver variety is defined by the GIT quotient $\mc X:=V\sslash_\theta G$.
See  Definition \ref{dfn:quiver} and Definition \ref{dfn:quivervariety} for details. 

\begin{dfn}[Definition \ref{dfn:mutation}] 
Fix a gauge node $k\in Q_0\backslash Q_f$. 
A quiver mutation at the gauge node $k$ is defined by the following steps. 
\begin{itemize}
    \item \textbf{Step (1)} Add another arrow $i\rightarrow j$ for each path $i\rightarrow k\rightarrow j$ passing through $k$, and invert directions of all arrows that start or end at the $k$. Suppose that we have a cycle containing this path in the potential $W$, and then we use the arrow $i\rightarrow j$ to replace the path $i\rightarrow k\rightarrow j$.
    \item \textbf{Step (2)} Convert $N_k$ to $N'_k:=\max(N_f(k), N_a(k))-N_k$, where $N_f(k)=\sum_{k\rightarrow j}b_{kj}N_j$ is called the outgoing and $N_a=\sum_{i\rightarrow k}b_{ik}N_i$ is called the incoming of the node $k$.
    \item \textbf{Step (3)} Remove  all pairs of opposite arrows between two nodes introduced by the mutation until all arrows between two nodes are in a unique direction. 
    \item \textbf{Step (4)} Add the new cubic terms arising from the quiver mutation. We get a new potential $\tilde W$. 
\end{itemize}
After performing a quiver mutation once, we denote the new quiver diagram by $(\tilde Q_f\subset \tilde Q_0, \tilde Q_1, \tilde W)$ and denote the associated input data for the quiver variety by $(\tilde V, \tilde G, \tilde \theta)$.
\end{dfn}
Consider the critical locus $Z:=Z(dW)$ of the potential $W$ and the GIT quotient $\mc Z=[Z\sslash_\theta G]$ before the quiver mutation. 
Consider the critical locus $\tilde Z:=Z(d\tilde W)$ of the potential $\tilde W$ and the GIT quotient $\tilde {\mc Z}:=[\tilde Z\sslash_{\tilde \theta} {\tilde G}]$, where $\tilde \theta$ is a carefully chosen character of the gauge group $\tilde G$. 
We aim to study the relations of Gromov-Witten (GW) theories of the two varieties $\mc Z$ and $\tilde{\mc Z}$. Genus $g$ Gromov-Witten invariants of a variety count stable maps from genus $g$ Riemann surfaces to the variety, see \cite{GW:kontsevich1995enumeration,GW:BM,GW:LT,GW:B}. 
Denote generating functions of genus $g$ GW invariants of $\mc Z$ and $\tilde{\mc Z}$ by $\mathscr{F}_g^{\mc Z}(\vec q)$ and $\mathscr{F}_{g}^{\tilde{\mc Z}}(\vec q')$ respectively, where $\vec q$ and $\vec q'$ are their k\"ahler variables. 
\begin{conjecture}[Seiberg duality conjecture \cite{benini2015cluster,nonabelianGLSM:YR}]\label{conj}
\footnote{One can find that the original mathematical Seiberg duality conjecture in \cite{nonabelianGLSM:YR} is about the transformations of GLSM under a quiver mutation, and in our work we are investigating the transformations of GW theory. 
In fact, the phases \eqref{eqn:phaseflag} and \eqref{eqn:quiverphaseofdual} before and after a quiver mutation we are choosing are geometric phases, where the GLSM theories are equal to GW theories of critical loci. Hence our results actually coincide with the original conjecture.}
\begin{equation}
    \mathscr{F}^{\mc Z}_g(\vec q)=\mathscr{F}^{\tilde{\mc Z}}_{g}(\vec q')\,,
\end{equation}
and the cluster transformations on the K\"ahler coordinates\footnote{Great thanks to Peng Zhao for suggesting this illuminating terminology.} are : $q_k'=q_k^{-1}$ and for $j\neq k$,
\begin{itemize}
        \item  if $N_f(k)>N_a(k)$, $\frac{{e}^{\pi i(N_f(j)'-1)}q_j'}{ {e}^{\pi i(N_f(j)-1)}q_j}=({e}^{\pi i N_k'}q_k)^{[b_{kj}]_+}
        ({e}^{\pi i N_k'})^{[-b_{kj}]_+} \prod_{i\neq k}e^{\pi iN_i a_{ij}} $;
        \item if $N_f(k)=N_a(k)$, $\frac{{e}^{\pi i(N_f(j)'-1)}q_j'}{ {e}^{\pi i(N_f(j)-1)}q_j}=\left(\frac{{e}^{\pi i N_k'}q_k}{1+(-1)^{N_1'}q_k}\right)^{[b_{kj}]_+}
        \left({e}^{\pi i N_k'}(1+(-1)^{N_1'}q_k)\right)^{[-b_{kj}]_+} \prod_{i\neq k}e^{\pi iN_i a_{ij}} $.
        \item If $N_f(k)<N_a(k)$, $\frac{{e}^{(N_f(j)'-1)}q_j'}{ {e}^{(N_f(j)-1)}q_j}=({e}^{\pi i N_k'})^{[b_{kj}]_+}
        \left({e}^{\pi i(N_f(k)-N_k)}q_k\right)^{-[-b_{kj}]_+} \prod_{i\neq k}e^{\pi iN_i a_{ij}} $
    \end{itemize}
    where $a_{ij}$ denotes the number of “annihilated” 2-cycles between the nodes i and j in step $(3)$ of the quiver mutation mechanism.
\end{conjecture}
This work will focus on the genus-zero Seiberg duality conjecture.
We utilize the quasimap theory and wall-crossing theorems to investigate the Gromov-Witten theory of GIT quotients $\mc Z$ and $\tilde{\mc Z}$, see \cite{MR2729000,MR3126932,MR3412343} for the quasimap theory, see \cite{MR3272909,MR3586512,MR3412343,ciocan2017higher,clader2017higher,wang2019mirror,zhou2019quasimap,ciocan2020quasimap} for wall-crossing theorems for various targets and genera.

Assume both $\mc Z$ and $\tilde{\mc Z}$ are quasiprojective, semi-positive, and admit a good torus action $S$. Further, we assume that both  $Z$ and $\tilde Z$ have at most lci singularities. All of our examples satisfy those conditions. 
The genus-zero wall-crossing theorem  \cite{MR3272909} states that the equivariant quasimap small $I$-function and the equivariant small $\mc J$-function of a GIT quotient are equal under the mirror map when the GIT quotient satisfies the above conditions, see a review in Section \ref{Sec:GW}.
Therefore, in order to prove the genus-zero Seiberg duality conjecture, we only have to prove that the equivariant quasimap small $I$-function of $\mc Z$ denoted by $I^{\mc Z, S}$ and that of $\tilde{\mc Z}$ denoted by $I^{\tilde{\mc Z}, S}$ satisfy all relations in the conjecture. 

All of our examples are nonabelian GIT quotients, so we apply the results of Rachel Webb about the abelian-nonabelian correspondence to study the equivariant quasimap small $I$-functions \cite{abeliannonabelian:Webb,abelianizationlef:Webb}. 
Let $T\subset G$ be the maximal torus. Then, consider the GIT quotient $Z\sslash_\theta T$ and its equivariant quasimap small $I$-function.
The abelian-nonabelian correspondence proves that the equivariant quasimap $I$-function of $\mc Z$ is the equivariant quasimap $I$-function of $Z\sslash_\theta T$ twisted by a factor.
There are some earlier nice works about the abelian-nonabelian correspondence \cite{givental1995quantum,abelian/nonabelian:CKB,abeliannonabelian:BCK2,abalian/nonabelian:BCK3,flag:quantumcohomology}.

\subsection{Main results}
In this work, we mainly consider $A_n$ type quivers in Figure \ref{diag:flagintrod} with $N_1\leq \ldots\leq N_D$.
\begin{figure}[H]
\centering
 \includegraphics[width=3.2in]{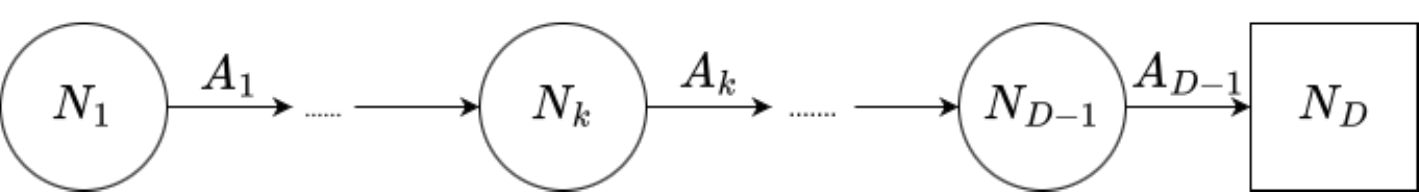}
 \caption{$A_n$-type quiver of length $D$}
 \label{diag:flagintrod}
\end{figure}
For the character $\theta$ in \eqref{eqn:phaseflag} of the gauge group $G=\prod_{i=1}^{D-1}GL(N_i)$, the quiver variety is a flag variety  $F(N_1,\ldots,N_D)$.

 Applying a quiver mutation, we obtain another quiver diagram with a potential $W=tr(BA_kA_{k-1})$ as Figure \ref{diag:dualofflagintrod}.
 \begin{figure}[H]
     \centering
     \includegraphics[width=3.5in]{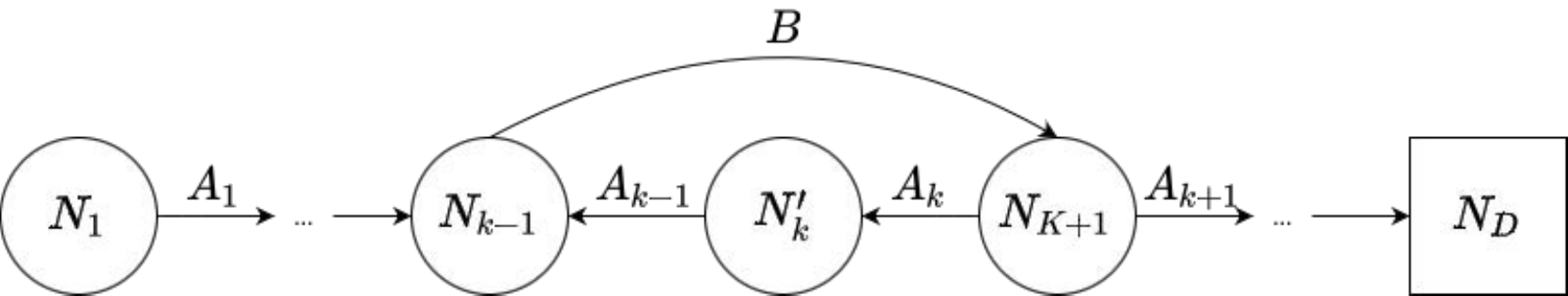}
     \caption{Quiver mutation of an $A_n$-type quiver}
     \label{diag:dualofflagintrod}
 \end{figure}
Consider the critical locus of the potential $Z=Z(dW)$. In the character $\tilde \theta$ in \eqref{eqn:quiverphaseofdual}, 
\begin{equation}
    Z^s(G)=\{ BA_k=0, \,s.t. \,A_i,  \text { for } i\neq k-1, \text{ and }\,B \text{ nondegenerate}  \}.
\end{equation}
Hence the GIT quotient
\begin{equation}\label{eqn:intr_complete}
    \mc Z^1:=Z\sslash_{\tilde \theta} G=\{BA_k=0\}\sslash_{\tilde\theta} G\,,
\end{equation}
is a subvariety in $\mc X^1$ which is defined by the quiver diagram below in the character $\tilde \theta$ in \eqref{eqn:quiverphaseofdual}.
\begin{figure}[H]
    \centering
    \includegraphics[width=3.5in]{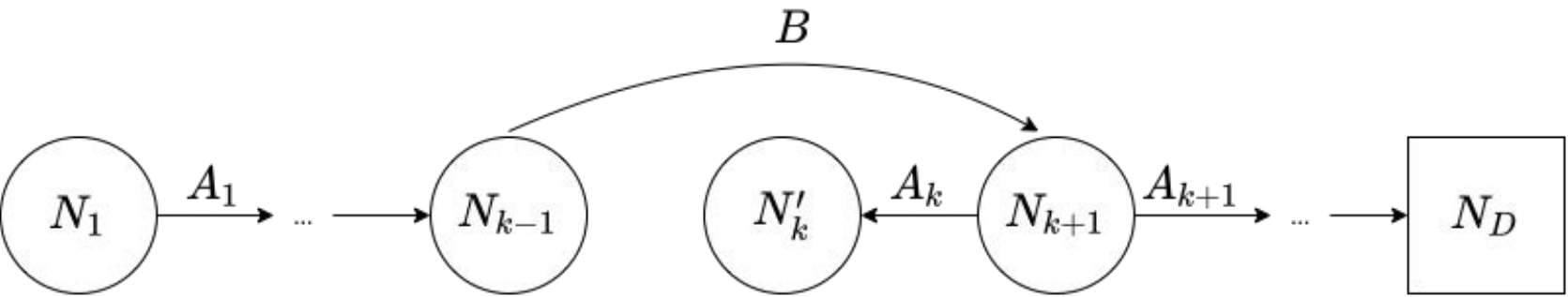}
    %\caption{}
    %\label{Diag:dualofflagintrod2}
\end{figure}
Both $F(N_1,\ldots, N_D)$ and $\mc Z^1$ admit a good torus action $S=(\C^*)^{N_D}$ coming from the frame node, see Equations \eqref{eqn:Saction}\eqref{eqn:torusS2}.
Denote the equivariant quasimap small $I$-functions of $F(N_1,\ldots, N_D)$ and $\mc Z^1$ by $I^{F, S}(\vec q)$ and $I^{\mc Z^1,S}(\vec q')$.
One of our main results proves that the two functions $I^{F, S}(\vec q)$ and $I^{\mc Z^1,S}(\vec q')$ satisfy the relations in Seiberg Duality conjecture.
\begin{thm}[Theorem \ref{thm:main}]\label{thm:intro1}
\begin{itemize}
    \item When $N_{k+1}\geq N_{k-1}+2$,
    \begin{equation}
    I^{F, S}(\vec q)=I^{\mc Z^1,S}(\vec q')\,,
\end{equation}
and the cluster transformations on the K\"ahler coordinates are,
\begin{equation}
    q_{k}'=q_k^{-1},\, q_{k+1}'=q_{k+1}q_k, \,q_i'=q_i\, \text{ for } i\neq k, k+1\,.
\end{equation}
\item When $N_{k+1}=N_{k-1}+1$, 
\begin{equation}
    I^{F,S}(\vec q)={e}^{(-1)^{N_{k}'}q_k}I^{\mc Z^1,S}(\vec q')\,, 
\end{equation}
with the cluster transformations on the K\"ahler coordinates
\begin{equation}
    q_{k}'=q_k^{-1}, \,q_{k+1}'=q_{k+1}q_k,\,q_i'=q_i\, \text{ for } i\neq k, k+1\,.
\end{equation}
\item When $N_{k+1}=N_{k-1}$,
 \begin{equation}
    I^{F,S}(\vec q)=I^{\mc Z^1,S}(\vec q')\,,
    \end{equation}
    with the cluster transformations on the K\"ahler coordinates,
    \begin{equation}
    q_k'=q_k^{-1}\,,
        q_{k+1}'=\frac{q_kq_{k+1}}{1+q_k}\,, q_{k-1}={q_{k-1}}({1+q_k})\,,q_i'=q_i \,\text{ for } i\neq k, k+1,k-1\,.
    \end{equation}
\end{itemize}
\end{thm}

Furthermore, we consider the generalized $A_n$-type quiver in Figure \ref{diag:tautflag} with $N_0\leq N_2$ and $N_1\leq \ldots\leq N_D$
\begin{figure}[H]
    \centering
    \includegraphics[width=3.4in]{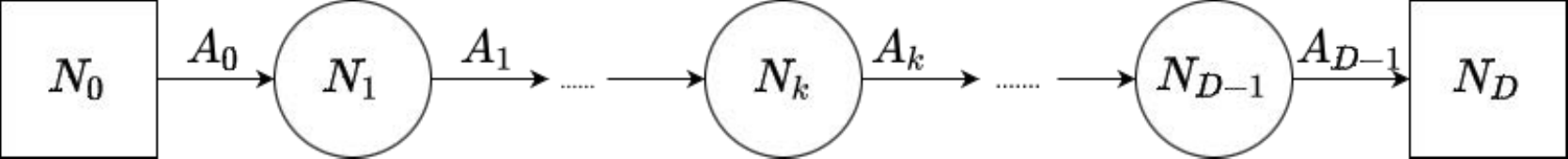}
    \caption{$A_n$-type quiver with two frame nodes}
    \label{diag:tautflag}
\end{figure}
\noindent which gives the total space of $N_0$-copies of the tautological bundle $S_1$ over a flag variety, denoted by $S_1^{\oplus N_0}\rightarrow F(N_1,\ldots,N_D)$, see Example \ref{ex:tautflag}. 
There are two different situations when performing a quiver mutation.
In the first case, the quiver mutation is at a gauge node $k\neq 1$. Repeating the construction we do for the flag variety, we obtain $\mc Z^3:=\{ BA_k=0 \}\sslash_{\tilde \theta} \tilde G\subseteq \mc X_3$, where $\mc X_3$ is the quiver variety defined by the quiver diagram below, 
\begin{figure}[H]
    \centering
    \includegraphics[width=3.4in]{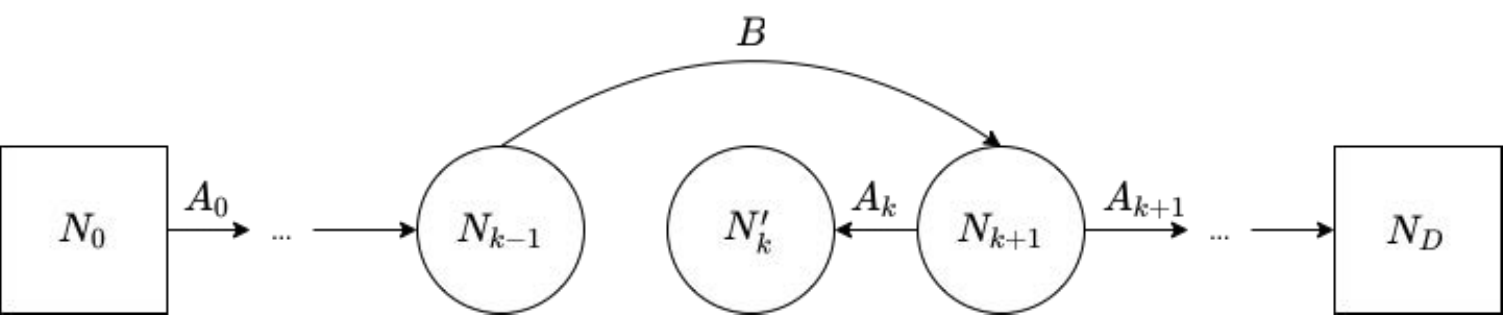}
\end{figure}
\noindent Notice that $\mc Z^3$ is the total space of $N_0$-copies of the tautological bundle $S_1$ over $\mc Z^1$, which is known as the local target over $\mc Z^1$ in GW theory.
In the second case, we apply a quiver mutation at gauge node $k=1$, and get $\mc Z^4 =\{BA_1=0\}\sslash_{\tilde \theta} \tilde G \subseteq \mc X^4$, where $\mc X^4$ is the quiver variety defined by the quiver diagram below,
\begin{figure}[H]
    \centering
    \includegraphics[width=3.4in]{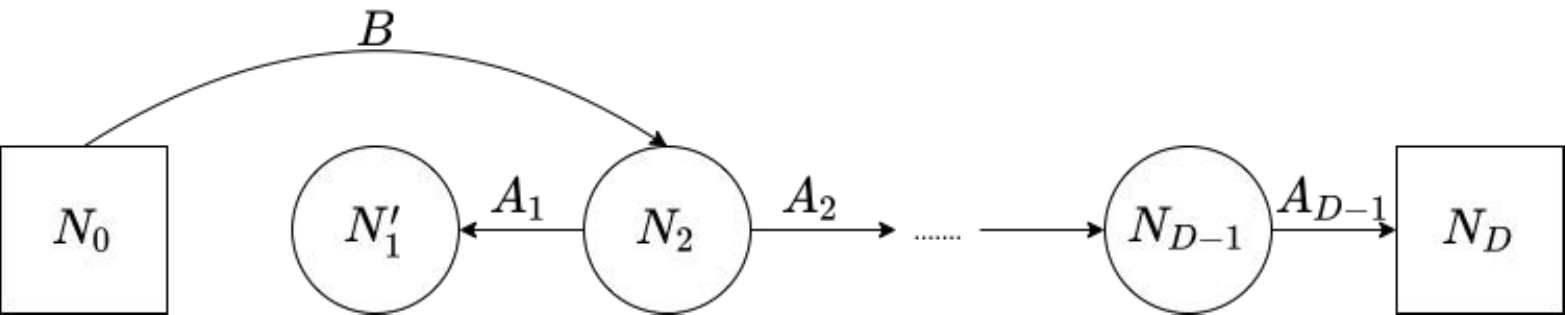}
    %\caption{}
    %\label{pic:dualoftautflag6intro}
\end{figure}
\noindent The two cases behave differently, since in the first one the leftmost frame node in the quiver diagram is far away from the gauge node $k$, but in the second case it's connected to the mutated gauge node. This slight difference causes different results  when $N_0=N_2$.

All three varieties $S_1^{\oplus N_0}\rightarrow F(N_1,\ldots, N_D)$ and $\mc Z^3$ $\mc Z^4$ admit a good torus action $S^2=(\C^*)^{N_0}\times (\C^*)^{N_D}$.
Denote their equivariant quasimap small $I$-functions by $I^{tF,S^2}(\vec q)$, $I^{\mc Z^3,S^2}(\vec q')$ and $I^{\mc Z^4,S^2}(\vec q')$ respectively. 
\begin{thm}[Theorem \ref{thm:main2}]
\begin{enumerate}
\item
Applying a quiver mutation at a gauge node $k\neq 1$,  we prove that $I^{tF,S^2}(\vec q)$ and $I^{\mc Z^3,S^2}(\vec q')$ exactly satisfy all relations in Theorem \ref{thm:intro1}.
\item
Applying a quiver mutation at the gauge node $k=1$, we obtain the following results. 
\begin{enumerate}
    \item When $N_2\geq N_0+1$, $I^{tF,S^2}(\vec q)$ and $I^{\mc Z^4,S^2}(\vec q')$ satisfy the same relations with those in Theorem \ref{thm:intro1} with the same K\"ahler coordinates transformations.
    \item  When $N_0=N_2$, 
    \begin{equation}
    I^{tF,S^2}(\vec q)=(1+(-1)^{N_1'}q_1)^{\sum_{A=1}^{N_0}\eta_A-\sum_{F=1}^{N_2}x_{F}^{(2)}+N_1'}I^{\mc Z^4,S^2}(\vec q')\,,
    \end{equation}
   with cluster transformations on the K\"ahler coordinates
         \begin{equation}
         q_1'=q_1^{-1},\, q_{2}'=\frac{q_1q_{2}}{1+(-1)^{N_1'}q_1},\, q_i'=q_i,\text{ for } i\neq 1,2\,,
    \end{equation}
 where $(1+(-1)^{N_1'}q_1)^{\sum_{A=1}^{N_0}\eta_A-\sum_{F=1}^{N_2}x_{F}^{(2)}+N_1'}$ is formally expanded as 
  \begin{equation}
\sum_{m\geq 0}\frac{\prod_{l=0}^{m-1}(\sum_{A=1}^{N_0}\eta_A-\sum_{F=1}^{N_2}x_{F}^{(2)}+N_1'-l)}{m!} ((-1)^{N_1'}q_1)^m\,.
  \end{equation}
See Section \ref{sec:proof} for notations in the above expressions.
\end{enumerate}
\end{enumerate}
\end{thm}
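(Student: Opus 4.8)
The plan is to establish both parts of the theorem through explicit manipulation of quasimap $I$-functions, which are computed by equivariant fixed-point localization / the abelianization procedure for GIT quotients of quiver representations. The starting point is that for an $A_n$-type quiver (flag variety or tautological bundle thereon), the quasimap $I$-function admits a completely explicit combinatorial expression as a sum over effective curve classes $\mathbf{d}=(d_1,\dots,d_D)\in\mathbb{Z}_{\geq 0}^D$, with each summand a product over the arrows of the quiver of ratios of products of linear forms $\prod(\text{Chern root}+m z)$; these are the standard hypergeometric-type building blocks. For part (1) — mutation at a node $k\neq 1$ — the key observation is that the tautological bundle $S_1^{N_0}$ contributes only a single extra factor to every summand of the $I$-function (the Euler-class factor of the bundle over the flag-variety base), and this factor depends only on the Chern roots $x^{(1)}_\bullet$ at node $1$ and the curve degree $d_1$, hence is completely untouched by a mutation localized at node $k\neq 1$. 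Therefore the proof of Theorem~\ref{thm:intro1} transplants verbatim: I would factor the $I$-function as $I_{tautf}(q)=\big(\text{bundle factor}\big)\cdot I_{flag}(q)$ degree-class by degree-class, invoke Theorem~\ref{thm:intro1} for the flag-variety factor under the stated variable change, and check that the bundle factor is invariant because none of $q_1,q_2,\dots$ entering it is modified in a way that reaches node $1$ (the variable changes touch only $q_k,q_{k\pm1}$). This reduces part (1) to bookkeeping.

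For part (2) — mutation at node $1$, which is adjacent to the tautological bundle — the two cannot be decoupled, and I would attack it by the same mechanism that proves the $N_{k+1}=N_{k-1}$ case of Theorem~\ref{thm:intro1}, namely a resummation identity. In that borderline balanced case the mutated-node summation in the ``before mutation'' $I$-function, after the substitution $q_1'=q_1^{-1}$, produces a geometric-type series in $q_1$ that must be recognized as $(1+(-1)^{N_1'}q_1)$ raised to a power determined by the representation-theoretic data at node $1$; the presence of the tautological bundle shifts that exponent by exactly the difference $\sum_{A=1}^{N_0}\eta_A-\sum_{F=1}^{N_2}x^{(2)}_F+N_1'$ appearing in the statement — the $\eta_A$ being the Chern roots of the $N_0$-dimensional bundle fiber, the $x^{(2)}_F$ the roots at node $2$, and $N_1'$ the Witten-effective shift from the mutated rank. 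Concretely, I would: (i) write the before-mutation $I$-function with the node-$1$ sum isolated; (ii) perform the Pochhammer-symbol manipulation that identifies the inner sum, now including the bundle's $\prod(\eta_A + \text{roots})$ contribution, as a single generalized-binomial factor $(1+(-1)^{N_1'}q_1)^{(\cdots)}$ times the after-mutation summand at the matched degree $d_2'=d_1+d_2$ etc.; (iii) verify that the variable change $q_2'=q_1q_2/(1+(-1)^{N_1'}q_1)$ is precisely what absorbs the remaining denominator powers, exactly paralleling the $q_{k+1}'=q_kq_{k+1}/(1+q_k)$ substitution of the flag case; (iv) read off the leftover prefactor, expand it via the stated $\sum_{m\geq0}$ formula, and confirm it matches.

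The main obstacle I anticipate is step (ii) of part (2): the combinatorial identity identifying the inner $q_1$-sum. Unlike the pure flag-variety computation, here each term of the node-$1$ sum carries the extra bundle factor $\prod_{A=1}^{N_0}\prod(\eta_A + \cdots)$, whose degree in the summation index $d_1$ is $N_0$ — exactly matching $N_0=N_2$ — so the numerator and denominator polynomial degrees in the relevant variable become equal and the series fails to be a naive geometric series; it converges to a binomial power only after a nontrivial regrouping (essentially a Chu--Vandermonde or Saalschütz-type summation in disguise). Getting the exponent right, including the integer shift $N_1'$ and the signs $(-1)^{N_1'}$, requires careful tracking of the mutation's effect on $R$-charges and on the orientation of the arrow $A_{k-1}$ that receives $R$-charge $1$; a sign error here propagates into the whole prefactor. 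I would isolate this as a standalone lemma (the ``balanced resummation lemma with a line-bundle insertion'') and prove it by induction on $N_0$, the base case $N_0=0$ being the flag-variety identity already available from Theorem~\ref{thm:intro1}, and the inductive step peeling off one Chern root $\eta_{N_0}$ at a time. Everything else — matching degree classes, checking that non-adjacent $q_i$ are inert, expanding the generalized binomial — is routine once this lemma is in hand.
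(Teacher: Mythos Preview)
Your overall architecture matches the paper's: Part~(1) is handled exactly as you describe---the tautological factor \eqref{taut:2} depends only on data at node~$1$, so when $k\neq1$ it rides along unchanged through the proof of Theorem~\ref{thm:main}. For Part~(2), the paper likewise restricts to torus fixed points, isolates the node-$1$ summation, and recognizes it (after a shift of summation index $k_I\mapsto n_I+m_I$) as the $I$-function of $S_1^{\oplus N_0}\to Gr(N_1,N_2)$ with \emph{shifted} equivariant parameters $\lambda_F\mapsto\lambda_F+m_F$; the after-mutation side is similarly recognized as $(S_1^\vee)^{\oplus N_0}\to Gr(N_1',N_2)$ with the same shifts. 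The identity between these two Grassmannian $I$-functions is then quoted as Theorem~\ref{thm:Haidong}, whose balanced case $N_0=N_2$ supplies precisely the prefactor $(1+(-1)^{N_1'}q_1)^{\sum\eta_A-\sum(\lambda_F+m_F)+N_1'}$; the $m_F$-dependence of the exponent is what produces the extra $(1+(-1)^{N_1'}q_1)^{-|\vec m|}$ absorbed into $q_2'$.

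The substantive difference is how the key resummation identity itself is proved. The paper does \emph{not} argue by induction on $N_0$; Theorem~\ref{thm:Haidong} is established by a contour-integral/residue argument (equating residues on the left and right half-planes of an auxiliary meromorphic function), followed by a limiting procedure: the $N_2=N_0+1$ case is obtained from the $N_2\geq N_0+2$ case by adding a fictitious $(N_2{+}1)$-st flavor and sending $\lambda_{N_2+1}\to\infty$, and the $N_2=N_0$ case by iterating this once more and extracting the leading coefficient of a polynomial $Q_m(y,\dots)$ computed in \cite{benini2015cluster}. Your proposed induction on $N_0$ with base case $N_0=0$ is problematic as stated: in the balanced case one has $N_0=N_2$, so $N_0=0$ forces $N_2=0$, which is degenerate and does not connect to the flag-variety identity of Theorem~\ref{thm:intro1}. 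If you want an inductive route, the natural one is the paper's---induct on $N_2-N_0$ downward by adjoining and then decoupling an extra equivariant parameter---rather than on $N_0$ upward.
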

Let $\mc J^{\mc Z, S}(\vec q)$ and $\mc J^{\tilde{\mc Z}, S}$ denote the small $J$-functions of $\mc Z$ and $\tilde{\mc Z}$ which comprises their genus-zero GW invariants.
By wall-crossing theorem \cite{MR3272909,zhou2019quasimap}, when $\mc Z$ and $\tilde{\mc{Z}}$ are Fano of index at least 2, 
\begin{equation}
   \mc J^{\mc Z,S}(\vec q)=I^{\mc Z,S}(\vec q),\,\, \mc J^{\tilde{\mc Z},S}(\vec q)=I^{\tilde{\mc Z},S}(\vec q)\,.
\end{equation}
The above two theorems conclude the Seiberg duality conjecture for small $\mc J$-functions. 
\begin{cor}[Theorem \ref{cor:seiberg}]
When $N_1<N_2<\ldots<N_D$ and $N_0+2\leq N_2$, 
the genus-zero Seiberg duality conjecture holds for $A_n$-type quivers: let $(\mc Z, \tilde {\mc Z})$ represent pairs of varieties $(F(N_1,\ldots, N_D), \mc Z^1),\, (S_1^{\oplus N_0}\rightarrow F(N_1,\ldots, N_D), \mc Z^3),\,S_1^{\oplus N_0}\rightarrow (F(N_1,\ldots, N_D), \mc Z^4)$, and then  
\begin{equation}
    \mc J^{\mc Z, S}(\vec q)=\mc J^{\tilde{\mc Z}, S}(\vec q')
\end{equation}
under the k\"ahler coordinates transformations $q_{k}'=q_{k}^{-1},\, q_i'=q_i$ for $i\neq k$. 
\end{cor}
\begin{rem}
The easiest case of Seiberg duality is that of the Grassmannian $Gr(r,n)$ and the dual Grassmannian $Gr(n-r, n)$. However, $Gr(r,n)$  and $Gr(n-r, n)$ are isomorphic, and the 2d Seiberg duality holds for a trivial reason. Our cases probably provide some examples with different underlying geometries.
\end{rem}
\begin{rem} 
3d gauge theories are thought to correspond to quantum K-theory \cite{benini2011comments,xie2013three,closset2012seiberg}. Mathematically, there is a nice result relating K-theoretic $I$-functions of the Grassmannian and the dual Grassmannian with a nontrivial change of level structures in \cite{dong2020level}.
\end{rem}
Let's describe ideas for our proofs. The fundamental building block is Seiberg duality between the pair of quivers in Figure \ref{figure5}. 
\begin{figure}[H]
    \centering
    \includegraphics[width=4.2in]{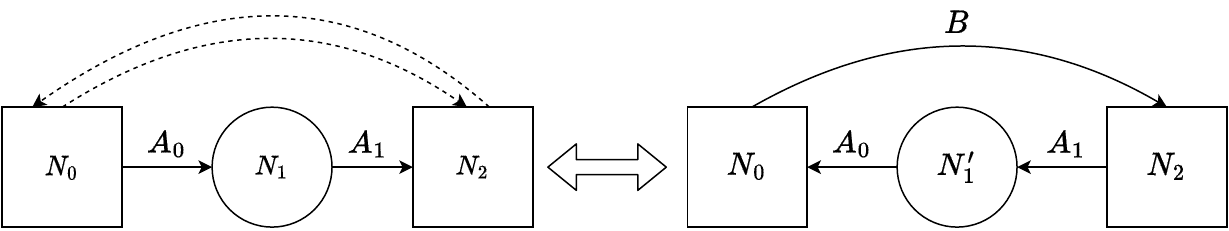}
    \caption{The two can be transferred to each other by mutating the middle gauge node. From the left to the right, there is a $3$-cycle arising and hence we have a potential $W=tr(BA_1A_0)$. From the right to the left, there are two opposite arrows which are dashed in the figure, and they are are annihilated.}
    \label{figure5}
\end{figure}
The two geometries before and after the quiver mutation are the total space of $N_0$-copies of the tautological bundle  $S_1$ over a Grassmannian $S_1^{\oplus N_0}\rightarrow Gr(N_1,N_2)$ and the total space of $N_0$-copies of the dual of the tautological bundle $S_1^\vee$ over the dual Grassmannian $(S_1^\vee)^{\oplus N_0}\rightarrow Gr(N_1',N_2)$. The two quasiprojective varieties admit a good torus action $S^{2}=(\C^*)^{N_0}\times (\C^*)^{N_2}$. 
The two varieties' equivariant quasimap small $I$-functions are equal via variable change under the condition $N_0\leq N_2-1$, see \cite{donghai}.

Seiberg duality is a local property, which means 
 a quiver mutation at a gauge node $k$ only affects the nodes $k-1$, $k+1$ that admit arrows with the node $k$, as shown in Figure \ref{diag:ideaofproof} below.  
Let $\mc Z$ and $\tilde{\mc Z}$ denote the two varieties before and after a quiver mutation.  
By isolating the terms of $I^{\mc Z, S}$ and $I^{\tilde {\mc Z}, S}$ that involve information of nodes $k,k-1$ and $k+1$, we find these terms behave in the same manner as $I$-functions of the fundamental building block. We are done if we could identify those terms. 
\begin{figure}[H]
    \centering
    \includegraphics[width=5.2in]{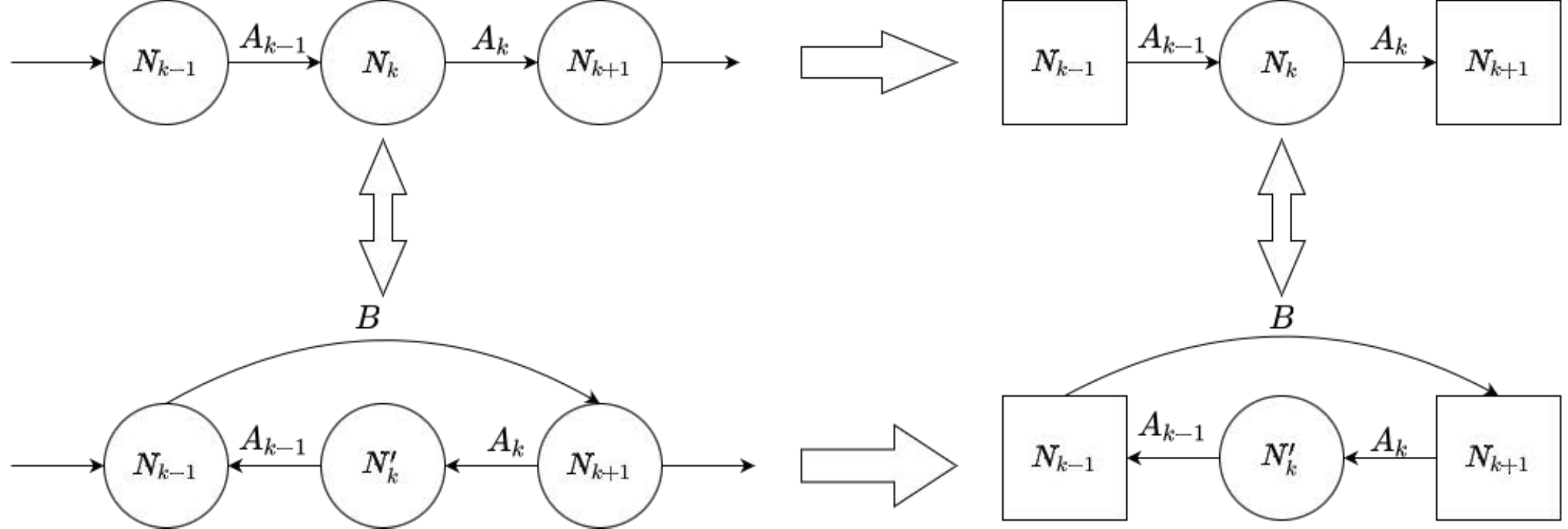}
    \caption{Local Seiberg duality}
    \label{diag:ideaofproof}
\end{figure}

In this work, we only consider $A_n$-type quivers with gauge group $GL(m)$ assigned to each gauge node, and we will consider star-shaped quivers in another work \cite{starquiver:hezhang}. 
Hori has studied another fascinating duality with Lie groups other than $GL(m)$, and it is not clear if there are any relations with Seiberg duality, see \cite{HoriTong,HoriKnapp,Hori}.

\subsection{Organization of the paper}
Section \ref{Sec:quiver} is mainly about some facts about quiver varieties, definitions of quiver mutations, and the construction of varieties after a quiver mutation. Section 3 is devoted to Gromov-Witten theory and wall-crossing theorems. 
Section 4 is about the equivariant quasimap small $I$-functions of our examples. Finally, section 5 consists of all proofs of our theorems.
\subsection*{Acknowledgments}
The author is grateful to Yongbin Ruan for proposing this topic, to Weiqiang He for his helpful discussion in many aspects,  Rachel Webb for help with abelian/nonabelian correspondence and Nawaz Sultani for clarification about $I$-functions for GIT quotients and suggestions in polishing the work, to Aaron Pixton for suggestions on polishing this work and more interesting questions related to this topic, to Peng Zhao for helpful discussion on related works in this topic. 
\section{Quiver varieties and its mutations}\label{Sec:quiver}
There are many good references for quiver varieties, and we mainly consult the nice book \cite{quiver}. 
\subsection{Quiver varieties}
Let $V=\op{Spec}(A)$ be an affine algebraic variety over $\C$ with at most lci singularities and let $G$ be a connected reductive algebraic group acting on $V$.
Let $\chi(G): G\rightarrow \C^*$ be the character group of $G$ and let $\theta\in \chi(G)$ be a character. 
Each character $\theta \in \chi(G)$ determines an one-dimensional representation $\C_\theta$ of $G$ and a line bundle 
\begin{equation}
    L_\theta:=V\times \C_\theta\in \text{Pic}^G(V)\,.
\end{equation}
\begin{dfn} Given an input data $(V,G,\theta)$,
$x\in V$ is called $\theta$-semistable if 
     $\exists\, k>0$ and $s\in {H}^0(V, {L}_\theta^k)^G$, such that $s(x)\neq 0$ and every $G$-orbit in $D_s=\{s\neq 0\}$ is closed. 
     Further, a $\theta$-semistable point $x\in V$ is called $\theta$-stable if its
    stabilizer $\op{Stab}_{G}(x)=\{ g\in G, g\cdot x=x\}$ is finite.
Let  $V^{ss}_\theta (G)$ denote the set of semistable points, $V^{s}_\theta(G)$ the set of stable points, and $V^{us}_\theta(G)$ the set of unstable points. 
The GIT quotient of $(V, G, \theta)$ is defined as $V\sslash_\theta G:=V^{ss}_\theta (G)\slash G$. 
\end{dfn}
The following will be assumed throughout. 
\begin{enumerate}[label=\roman*]
    \item $V^s=V^{ss}\neq \emptyset$.
    \item The subscheme $V^s$ is nonsingular. 
    \item The group $G$ acts freely on $V^s$. 
\end{enumerate}
We will instead denote the set of semistable points, the set of stable points, and the set of unstable points by $V^{ss}(G)$, $V^s(G)$, and $V^{us}(G)$, and denote the GIT quotient of $(V, G, \theta)$ by $V\sslash G$ when there is no confusion arising for the character.

\begin{dfn}[\cite{quiver}]\label{dfn:quiver}
A  {quiver diagram} is a finite oriented graph consisting of $(Q_f\subset Q_0, Q_1, W)$, where 
\begin{itemize}
    \item $Q_0$ is the set of vertices among which $Q_f$ is the set of frame nodes, usually denoted by $\framebox(3,3){}$ in the graph, and $Q_0\backslash Q_f$ is the set of gauge nodes, usually denoted by $\bigcirc $ in the graph.
    \item $Q_1$ is the set of arrows. An arrow from nodes $i$ to $j$ is denoted by $i\rightarrow j\in Q_1$, and the number of all such arrows is denoted by $b_{ij}$.
    \item $W$ is the potential, defined as a function on cycles in the diagram. 
\end{itemize}
\end{dfn}
We always assume the quiver diagram has no $1$-cycle or $2$-cycles, and this type of quiver is known as a cluster quiver.
For a cluster quiver, $b_{ij}\in \Z$ can be positive or negative. $b_{ij}>0$ indicates that all arrows are from $i$ to $j$, and $b_{ij}<0$ indicates that all arrows are from $j$ to $i$.

\begin{dfn}\label{dfn:quivervariety}
For a quiver diagram $(Q_f\subseteq Q_0, Q_1, W)$, let $\vec v=(N_i)_{i\in Q_0}$ be a collection of nonnegative integers for each node $i\in Q_0$.
Let $V=\bigoplus_{i\rightarrow j\in Q_1}\C^{N_i\times N_j}$ and $G=\prod_{i\in Q_0\backslash Q_f}GL(N_i)$ be an affine variety and a connected reductive group.
Fix the action of $G$ on $V$ firmly in the following way. For each $g=(g_i)_{i\in Q_0\backslash Q_f}\in G$ and each $A=(A_{i\rightarrow j})_{i\rightarrow j\in Q_1}\in V$, where $A_{i\rightarrow j}$ is an element in the vector space of matrices $\C^{N_i\times N_j}$,
\begin{equation}\label{eqn:sec2GonV}
    g\cdot (A_{i\rightarrow j})=(g_iA_{i\rightarrow j}g_{j}^{-1})\,.
\end{equation}
For a character $\theta\in \chi(G)$
\begin{equation}\label{eqn:polarizationchar}
\theta(g)=\prod_{i\in Q_{0}\backslash Q_f} \det(g_i)^{\sigma_i}\,,
\end{equation}
where  $\sigma_i\in \R$,
the quiver variety is defined to be the GIT quotient $V\sslash _{\theta}G$.
For a cycle $k_1\rightarrow k_2\rightarrow \cdots\rightarrow k_i\rightarrow k_1$ in the quiver diagram, we can define a $G$-invariant function 
\begin{equation}
    tr(A_{k_1\rightarrow k_2}\cdots A_{k_i\rightarrow k_1})\,.
\end{equation}
The potential $W$ is the sum of such $G$-invariant functions on cycles.
\end{dfn}
\begin{dfn}\label{dfn:outgoingincoming}
Given a quiver diagram $(Q_f\subset Q_0, Q_1,W)$, the outgoing of a node $k$ is defined as $N_f(k):=\sum_{i}[b_{ki}]_+N_i$, and the incoming is defined as $N_a(k):=\sum_{i}[b_{ik}]_+N_i$, with $[b]_{+}:=\max\{ b,0\}$ for any integer $b$.
\end{dfn}
Notice that the potential $W$ is $G$-invariant, so $W$ descends to a function on $V\sslash_\theta G$. 
\begin{ex}\label{ex:flag}
An $A_n$-type quiver diagram is as follows with $N_1\leq \ldots \leq N_D$.
\begin{figure}[H]
    \centering
    \includegraphics[width=3.2in]{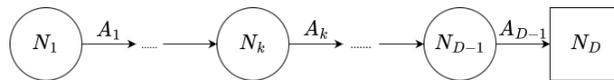}
   \caption{$A_n$ type quivers of length $D$}
    \label{diag:flag}
\end{figure}
\noindent
The $A_i$ above arrows represent matrices in $\C^{N_i\times N_{i+1}}$.
In this example, $V=\oplus_{i=1}^{D-1}\C^{N_i\times N_{i+1}}$, $G=\prod_{i=1}^{D-1}GL(N_i)$, and $G$ acts on $V$ as \eqref{eqn:sec2GonV}. 
Choose a character
\begin{equation}
    \theta(g)=\prod_{i=1}^{D-1}\det(g_i)^{\sigma_i}\,,
\end{equation}
with positive phase
\begin{equation}\label{eqn:phaseflag}
    \sigma_i>0,\,\, \forall i\in Q_0 \backslash Q_f\,.
\end{equation}
The semistable locus is
\begin{equation}
    V^{ss}(G)=\{A_{i}\text{ nondegenerate }, \forall i \}\,.
\end{equation} 
The GIT quotient $V\sslash G$ is a flag variety and we denote it by $F(N_1,\ldots,N_D)$.
In particular, when there is only one gauge node and one frame node, the quiver variety is a Grassmannian $Gr(N_1, N_2)$.

The flag variety $F(N_1,\ldots, N_D)$ admits a set of tautological bundles $S_1\subseteq S_2\subseteq \ldots\subseteq S_{D}=\C^{N_D}$.
\end{ex}

\begin{ex}\label{ex:tautflag}
We can also consider the generalized 
$A_n$-type quiver diagram  with $N_1\leq \ldots \leq N_D$
\begin{figure}[H]
    \centering
    \includegraphics[width=3.5in]{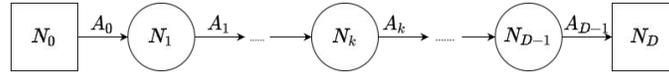} 
    \caption{Generalized $A_n$ type quivers with two frame nodes}
    \label{fig:tautflag}
\end{figure}
\noindent
With abuse of notation, we denote by $(V, G, \theta)$ the input data of the quiver variety, where $G$ acts on $V$ in the standard way \eqref{eqn:sec2GonV} and the phase is chosen as \eqref{eqn:phaseflag}.
Then
\begin{equation}
   V^{ss}(G)=\{A_{k}\text{ nondegenerate }, \text{ for } k=1,\ldots, D-1 \}\,,
\end{equation}
 and the GIT quotient is the total space of $N_0$ copies of the tautological bundle $S_1$ over a flag variety $F(N_1,N_2,\ldots,N_D)$, which we denoted by
    $V\sslash G:=S_1^{\oplus N_0}\rightarrow F(N_1,\ldots,N_D)$.
In particular, when $D=2$, the quiver variety is the total space of $N_0$ copies of the tautological bundle $S_1$ over a Grassmannian $S_1^{\oplus N_0}\rightarrow Gr(N_1, N_2)$. 
\end{ex}
\subsection{Quiver Mutation}
We introduce the quiver mutation applet in this section.
Assume we are given a quiver diagram $(Q_f\subseteq Q_0,Q_1,W)$ and a collection of integers $\vec v=(N_i)_{i\in Q_0}$.
\begin{dfn}\label{dfn:mutation}
Fix a gauge node $k$ of the quiver diagram $(Q_f\subset Q_0, Q_1, W)$.
A quiver mutation at the gauge node $k$ is defined by the following steps. 
\begin{itemize}
    \item \textbf{Step (1)} Add another arrow $i\rightarrow j$ for each path $i\rightarrow k\rightarrow j$ passing through $k$, and invert directions of all arrows that start or end at the $k$. Suppose that we have a cycle containing this path in the potential $W$, and then we use the arrow $i\rightarrow j$ to replace the path $i\rightarrow k\rightarrow j$.
    \item \textbf{Step (2)} Convert $N_k$ to $N'_k=\max(N_f(k), N_a(k))-N_k$. 
    \item \textbf{Step (3)} Remove  all pairs of opposite arrows between two nodes introduced by the mutation until all arrows between two nodes are in a unique direction. 
     \item \textbf{Step (4)} Add new cubic terms arising from the quiver mutation to $W$. 
\end{itemize}
\end{dfn}
A quiver mutation does not generate any 1-cycle or 2-cycles by step $(3)$, so a cluster quiver is transformed to another cluster quiver via a quiver mutation, denoted by $(\tilde Q_f\subset\tilde Q_0, \tilde Q_1,\tilde W)$. 
Throughout the paper, we will reserve the letter $k$ for the gauge node at which we perform quiver mutations.

\begin{ex}\label{ex:dualofflag}
Performing a quiver mutation to an $A_n$-type quiver at a gauge node $k$, we obtain the following quiver diagram with a potential $\tilde W=tr(BA_kA_{k-1})$ .
\begin{figure}[H]
    \centering
    \includegraphics[width=3.6in]{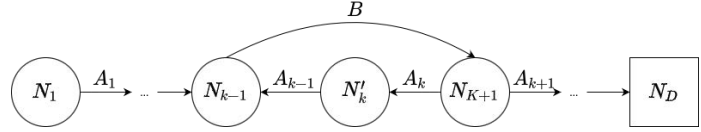}
   \caption{Dual quiver diagram by performing mutation at a gauge node $k$ of $A_n$-type quiver.}
    \label{pic:dualofflagD}
\end{figure}
\noindent
Then  $\tilde V=\bigoplus_{i\neq k-1,k}\C^{N_i\times N_{i+1}}\oplus \C^{N_{k-1}\times N_{k+1}}\oplus \C^{N_{k+1}\times N_{k}'}\oplus \C^{N_k'\times N_{k-1}}$  and $\tilde G=\prod_{i\neq k}GL(N_i)\times GL(N_k')$ are the associated affine variety and the gauge group. $\tilde G$ acts on $\tilde V$ as \eqref{eqn:sec2GonV}. 
Choose a character  ${\theta}(g)=\prod_{i=1}^{D-1}\det{g_i}^{\sigma_i}$ of $\tilde G$ as follows,
\begin{align}\label{eqn:quiverphaseofdual}
    &\sigma_k<0\,,\nonumber\\
    &\sigma_{i}>0\,\,\forall i\neq k\,,\nonumber\\
    & \sigma_{k}+\sigma_{k+1}>0\,.
\end{align}
One can check that 
\begin{equation}
     \tilde V^{ss}_{\tilde \theta}(\tilde G)=\{B, A_i \text{ nondegenerate for } i\neq k-1,k+1 ,\, \begin{bmatrix} A_{k}&A_{k+1} \end{bmatrix} \,\text{nondegenerate}\}\,.
\end{equation}
Comparing the phase in \eqref{eqn:quiverphaseofdual} and the phase of $G$ in \eqref{eqn:phaseflag}, we notice that only the gauge group of the gauge node $k$ changes its phase to the negative.

Consider the critical locus of the potential $Z^1:=Z(d \tilde W)$ which is equivalent to the following three equations,
\begin{equation}
    BA_k=0,\,A_kA_{k-1}=0,\,A_{k-1}B=0\,.
\end{equation}
 Then one can check
 \begin{equation}
     Z^1\cap \tilde V^{ss}(\tilde G)=\{BA_k=0, A_{k-1}=0,\text{ s.t. } A_i, \text{ for } i\neq k-1, \text{ and } B \text{ nondegenerate} \}\,.
 \end{equation}
Consider the quiver diagram in Figure \ref{pic:dualofflag2D} by deleting the arrow $k\rightarrow k-1$ in the Figure \ref{pic:dualofflagD} to respond to the $A_{k-1}=0$ equation in the critical locus of $W$.
\begin{figure}[H]
     \centering
     \includegraphics[width=3.5in]{figure2a.pdf}
     \caption{}
     \label{pic:dualofflag2D}
 \end{figure}
\noindent In the character $\tilde\theta$ in Equation \eqref{eqn:quiverphaseofdual}, the semistable locus consists of
\begin{equation}
    \{A_i \,\forall i \text{ and } B  \text{ nondegenerate} \}\,.
\end{equation} 
Denote the corresponding quiver variety by  ${\mc X^1}$, and then one can find the GIT quotient $\mathcal Z^1:=Z^1\sslash G= Z^1\cap \tilde V^{ss}(\tilde G) \slash G$ is a subvariety in $\mathcal X^1$ defined by the equation $\{BA_k=0\}$, where we use $A_i$ and $B$ to denote the matrix element in the quiver variety $\mathcal X^1$ by abuse of notation. 
Alternatively, $\mc Z^1$ is a zero locus of a regular section $s$ of the bundle $S_{k-1}^\vee\otimes S_k^\vee$ over the quiver variety $\mc X^1$.

A special situation is when we apply a quiver mutation at the node $k=1$ as Figure \ref{pic:dualofflagnode1} without potential. 
The resulting quiver diagram is 
\begin{figure}[H]
    \centering
   \includegraphics[width=3in]{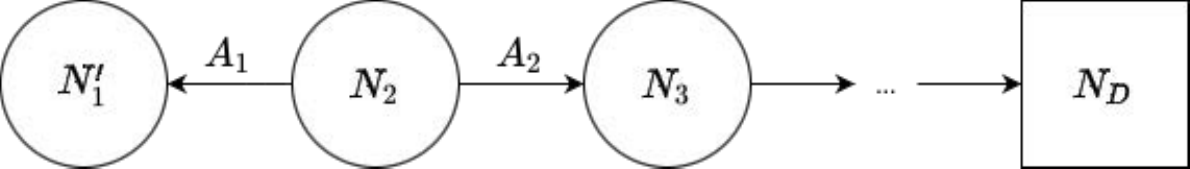}
    \caption{The quiver diagram by performing a quiver mutation at the gauge node $k=1$}
    \label{pic:dualofflagnode1}
\end{figure}
\noindent
In the same phase \eqref{eqn:quiverphaseofdual}, the semistable locus consists of 
\begin{equation}
    \{A_i \text{ nondegenerate }\}\,.
\end{equation}
Since there is no potential, 
the quiver variety denoted by $\mc X^{2}:=\tilde V\sslash \tilde G$ is what we are looking for in the dual side in this case. 
\end{ex}

\begin{ex}\label{ex:dualoftautflag}
In this example, we study quiver mutations to a generalized $A_n$-type quiver diagram in Figure \ref{fig:tautflag} and the corresponding varieties in the dual side.  
\begin{itemize}
    \item Case 1: Applying a quiver mutation at a gauge node $k\neq 1$, we get the quiver diagram below with a potential $\tilde W=tr(BA_kA_{k-1})$
\begin{figure}[H]
    \centering
 \includegraphics[width=3.5in]{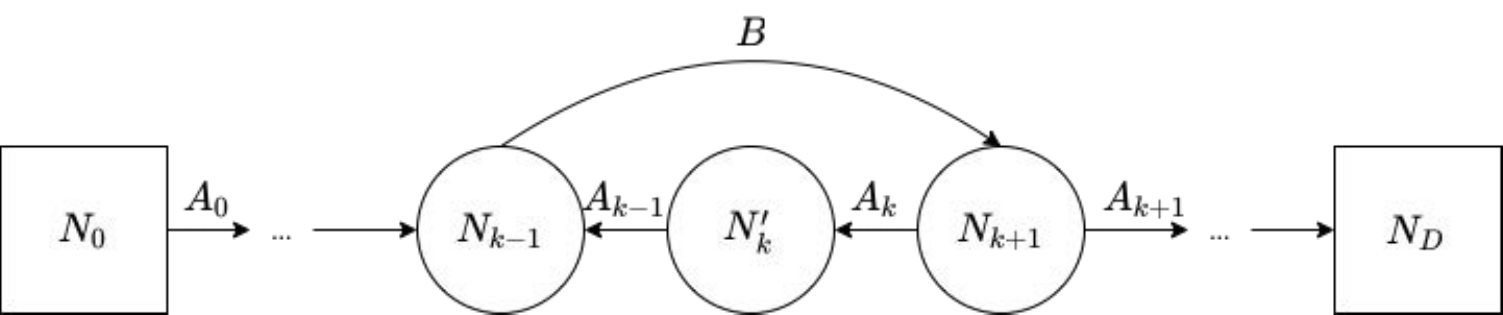}
    \caption{Quiver diagram by performing a quiver mutation at a gauge node $k\neq 1$ to the $A_n$-type quiver with two frame nodes.}
    \label{pic_dualoftautflag4}
\end{figure}
\noindent
This situation is almost the same with that of flag variety. 
By abuse of notation, we still denote the input data of the GIT quotient by $(\tilde V, \tilde G, \tilde \theta)$.
Under the standard group action \eqref{eqn:sec2GonV} and the phase of $\tilde G$  in \eqref{eqn:quiverphaseofdual}, we have
\begin{equation}
     \tilde V^{ss}(\tilde G)=\{B, A_i \text{ nondegenerate for } i\neq 0, k-1,k+1,\,\begin{bmatrix} A_{k}&A_{k+1} \end{bmatrix} \,\text{nondegenerate}  \}\,.
\end{equation}
We consider another quiver diagram in Figure \ref{figure4b} by deleting the arrow $k\rightarrow k-1$ in Figure \ref{pic_dualoftautflag4}.

\begin{figure}[H]
    \centering
   \includegraphics[width=3.4in]{figure4b.pdf}
    \caption{}
    \label{figure4b}
\end{figure}
\noindent Under the standard group action and in the phase \eqref{eqn:quiverphaseofdual}, the semistable locus consists of 
\begin{equation}
    \{A_i, B \text{ nondegenerate }\}\,.
\end{equation}
Denote the corresponding quiver vareity by $\mathcal X^3$, and notice that $\mathcal X^3$ is the total space of $ S_1^{\oplus N_0}\rightarrow \mathcal X^1$.
Consider the critical locus of the potential $Z^3:=Z(d\tilde W)$. It is equivalent to the following three equations
\begin{equation}
    BA_k=0,\,A_kA_{k-1}=0,\,A_{k-1}B=0\,.
\end{equation}
$Z^3\cap \tilde V^{ss}(\tilde G)=\{BA_k=0, A_{k-1}=0\}$, and $\mc Z^3:=Z^3\cap \tilde V^{ss}(\tilde G)\slash \tilde G=\{BA_k=0\}\sslash \tilde G$ is a subvariety of the quasiprojective variety $\mc X^3$. We note that $\mc Z^3=S_1^{\oplus N_0}\rightarrow \mc Z^1$ is the total space of $N_0$ copies of the tautological bundle over $\mc Z^1$.
\item Case 2: Applying a quiver mutation at the gauge node $k=1$, we get the quiver diagram in Figure \ref{figure4c} together with a potential $\tilde W=tr(BA_1A_0)$.
\begin{figure}[H]
    \centering
   \includegraphics[width=3.5in]{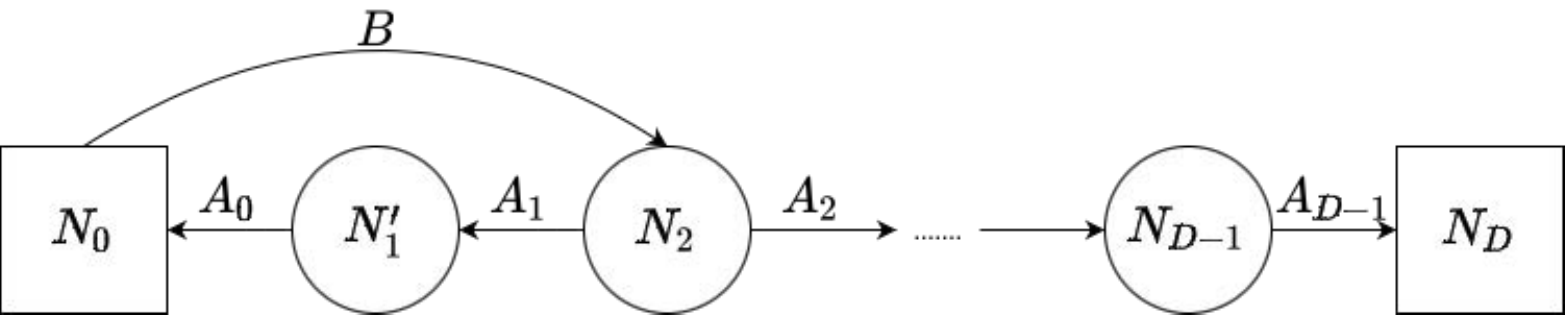}
    \caption{The quiver diagram by performing a quiver mutation at the gauge node $k=1$.}
    \label{figure4c}
\end{figure}
     \noindent
     Again, we denote the input data of this quiver variety by $(\tilde V, \tilde G,\tilde \theta)$. 
 In the phase \eqref{eqn:quiverphaseofdual},
one can check that
 \begin{equation}
     \tilde V^{ss}(\tilde G) =\{A_i,\, i\neq 0, 2 \text{ nondegenerate},\,\begin{bmatrix} A_{1}&A_{2} \end{bmatrix} \,\text{nondegenerate}\}\,.
 \end{equation}
 Consider the quiver diagram in Figure \ref{pic-dualoftautflag6} and its quiver variety. 
 \begin{figure}[H]
    \centering
   \includegraphics[width=3.5in]{figure4d.pdf}
    \caption{}
   \label{pic-dualoftautflag6}
\end{figure}
In the character \eqref{eqn:quiverphaseofdual}, one can find the semistable locus consists of 
 \begin{equation}
     \{A_1,\ldots,A_{D-1} \text{ nondegenerate}\}\,. 
 \end{equation}
 Denote the quiver variety by $\mathcal X^4$, and we notice that $\mathcal X^4$ is the total space of the vector bundle $S_2^{\oplus N_0}\rightarrow \mathcal X^2$.
 Consider the critical locus $Z^4=Z(d\tilde W)$ which is equivalent to 
\begin{equation}
    BA_1=0,\, A_1A_0=0, \,A_0B=0\,.
\end{equation}
Then we have 
\begin{equation}
    Z^4\cap \tilde V^{ss}(\tilde G)=\{A_0=0,\,BA_1=0, A_1,\ldots,A_{D-1} \text{ nondegenerate}\}\,.
\end{equation}
 The GIT quotient of critical locus
\begin{equation}
     \mc Z^4: =\tilde V^{ss}(\tilde G)\cap Z^4/\tilde G=\{ BA_1=0\}\sslash \tilde G\,
\end{equation} 
is a subvariety in $\mathcal X^4$.
\end{itemize}
\end{ex}

\section{Gromov-Witten invariants and wall-crossing theorem}\label{Sec:GW}
\subsection{Gromov-Witten invariants}
Since the examples we are interested in are all smooth quasiprojective varieties, we will only introduce the Gromov-Witten theory of smooth varieties. We refer to the beautiful book \cite{GW:mirrorsym} about the basic properties of GW theory.

\begin{dfn}
Let $\mc X$ be a smooth quasiprojective variety.
A stable map to $\mc X$ denoted by $(C, p_1,\ldots,p_n;f)$ consists of the following data:
\begin{itemize}
    \item $(C,p_1,\ldots,p_n)$ or simply $C$ is a connected reduced curve with $n\geq 0$ distinct marked non-singular points and at most ordinary double singular points; 
    \item every component of $C$ of genus 0 which is contracted by $f$ must have at least 3 special (marked or singular) points, and every component of $C$ of genus 1 which is contracted by $f$ must have at least 1 special point. 
    \end{itemize}
\end{dfn}
The class or the degree of a stable map $(C, p_1,\ldots, p_n; f)$ is defined as the homology class of the image $\beta=f_*[C]$.
For a fixed curve class $\beta\in H_2(\mc X, \Z)$, let $\overline{M}_{g,n}(\mc X, \beta)$ denote the stack of stable maps from $n$-marked and genus-g curves $C$ to $\mc X$ such that $f_*[C]=\beta$.
When $\mc X$ is projective, $\overline{M}_{g,n}(\mc X, \beta)$ is a proper separated DM stack and admits a perfect obstruction theory. Hence we can construct the virtual fundamental class $[\overline{M}_{g,n}(\mc X, \beta)]^{vir}\in A_{\text{vdim}}(\overline{M}_{g,n}(\mc X, \beta))$ where $\text{vdim}=\int_{\beta}c_1(X)+(\dim(\mc X)-3)(1-g)+n$. See  \cite{GW:LT,GW:BFintrinsic,GW:B}.

Define the universal curve by
\begin{equation}
\pi:\mc C_{g,n}\rightarrow \overline{M}_{g,n}(\mc X,\beta)\,,
\end{equation}
and $s_i$ are sections of $\pi$ sending each $(C, p_1,\ldots,p_n;f)$ to $p_i$. 
Let $\omega_\pi$ be the relative dualizing sheaf and $\mc P_i=s_i^*(\omega_\pi)$ be the cotangent bundle at the $i$-th marking.
Define the $\psi$-class by $\psi_i:=c_1(\mc P_i) \in H^2(\overline M_{g,n}(\mc X,\beta))$. 
Define evaluation maps by
\begin{align}
   ev_i:\overline M_{g,n}(\mc X,\beta)&\longmapsto \mc X\nonumber\\
   (C, p_1,\ldots,p_n;f)&\longmapsto f(p_i)\,.
\end{align}
Let $\gamma_1,\ldots,\gamma_n\in H^*(\mc X)$ be cohomology classes and $d_i$ $i=1,\ldots,n$ be some positive integers. The GW invariant is defined by
\begin{equation}\label{eqn:GWinv}
    \langle\tau_{d_1}\gamma_1,\ldots,\tau_{d_n}\gamma_n \rangle_{g,n,\beta}:=\int_{[\overline M_{g,n}(\mc X, \beta)]^{vir}}\prod_{i=1}^{n}\psi_i^{d_i}ev_i^*(\gamma_i)\,.
\end{equation}
Let $\alpha_0=1,\alpha_1,\ldots, \alpha_m\in H^*(\mc X)$ be a set of generators of cohomology group, and $\alpha^0,\alpha^2,\ldots, \alpha^m\in H^*(\mc X)$ be the Poincar\'e dual. 
The small $\mc J$-function of $\mc X$ which comprises genus-zero GW invariants is defined by
\begin{equation}\label{eqn:GWJ}
    \mc J^{\mc X}(q,\mathbf{t},z)= 
    %\mathbb 1+{\mathbf{t}\over z}+
    %\sum_{(k,\beta)\neq (0,0),(1,0)}\frac{q^{\beta}}{k!}(ev_\bullet)_*
    %\frac{\prod_{i=1}^kev_i^*(\mathbf{t})\cap[\overline M_{0,k+\bullet}(\mc %X,\beta)]^{vir}}{z(z-\psi_\bullet)}\,.
  \sum_{i=0}^{m} \sum_{(k\geq 0,\beta )} \alpha^i \langle \frac{\alpha_i}{z(z-\psi_{\bullet})}\mathbf{t}\ldots \mathbf{t}\rangle_{g,k+1,\beta}\frac{q^{\beta}}{k!} \,.
\end{equation}
where $\mathbf{t}\in H^{\leq 2}(\mc X)$.
When $\mc X$ is noncompact, the virtual class $[\overline M_{g,n}(\mc X, \beta)]^{vir}$ is not proper and the intersection again the virtual class doesn't make sense. 
However when $\mc X$ admits a torus action, denoted by $S$, then $S$ induces an action on $\overline M_{g,n}(\mc X, \beta)$ by sending a stable map $(C, p_1,\ldots, p_n;\,f)$ to $(C, p_1,\ldots, p_n;\, s\circ f)$ for each $s\in S$.
Suppose that the $S$-fixed substack $\overline M_{g,n}(\mc X, \beta)^S$ is proper. We can define the equivariant GW invariants as follows.
Let $H^*_S(\mc X):=H^*(\mc X\times_G EG)$ be equivariant cohomology of $\mc X$.
For $\gamma_1,\ldots,\gamma_n\in H^*_S(\mc X)$, the equivariant GW invariants are defined as,
\begin{equation}
      \langle\tau_{d_1}\gamma_1,\ldots,\tau_{d_n}\gamma_n \rangle_{g,n,\beta}^S:=\sum_{F}\int_{F}
      \frac{i_F^*\left(\prod_{i=1}^{n}\psi_i^{d_i}ev_i^*(\gamma_i)\right)}{e^S(N^{vir}_F)}\,,
\end{equation}
where the summation is over all torus fixed locus $F$, the map $i_F: F\rightarrow \overline M_{g,n}(\mc X, \beta)$ is the embedding, and $N_F^{vir}$ is the virtual normal bundle of $F$.
When $\mc X$ is projective, the nonequivariant limit is equal to the GW invariant defined in \eqref{eqn:GWinv}, see \cite{GW:GPequiv}.

Similarly,  we can define the equivariant small $\mc J$-function by changing each correlator in \eqref{eqn:GWJ} to equivariant invariant. We denote the equivariant smalll $\mc J$ function by $\mc J^{\mc X, S}( q,\mathbf{t},  z)$.

\subsection{genus-zero wall-crossing theorem}
In this subsection, we introduce the genus-zero wall-crossing theory in the context of Cheong, Ciocan-Fontanine, Kim, and Maulik  \cite{MR3126932,MR3412343,MR3272909,MR3586512}.  
We only involve the necessary parts for our purpose. 

Fix a valid input data for a GIT quotient $(V, G, \theta)$. 
\begin{dfn}\label{dfn:stablequasigraphmap}
A quasimap from $\mathbb P^1$ to $V\sslash G$ consists of the data $(P,u)$ where
\begin{itemize}
\item $P$ is a principle $G$-bundle on $\mbb P^1$,
\item $u$ is a section of the induced bundle $P\times_G V$ with the fiber $V$ on $\mbb P^1$.
\end{itemize}
The class of a quasimap is defined as $\beta\in \op{Hom}(\op{Pic}^G(V), \Z)$, such that for each line bundle $L\in \op{Pic}^G(V)$, 
\begin{equation}
    \beta(L)=\deg_{\mbb P^1}(u^*(P\times_G L))\,.
\end{equation}
\end{dfn}
\begin{dfn}\label{dfn:effectiveclass}
An element
$\beta\in \op{Hom}(\op{Pic}^G(V),\Z)$
is called an $I$-effective class if it is the class of a quasimap from $\mbb P^1$ to $V\sslash G$. These $I$-effective classes form a semigroup, denoted by $\op{Eff}(V,G,\theta)$.
\end{dfn}
\begin{dfn}
A quasimap $(P, u)$ from $\mbb P^1$ to $V\sslash G$ is stable if
\begin{enumerate}[label=\roman*]
    \item the set $B:=u^{-1}(V^{us})\subset \mbb P^1$ is finite, and points in $B$ are called base points of the quasimap. 
    \item $\mathbf L_\theta:=u^*(P\times_GL_\theta)$ is ample, where $L_{\theta}=V\times \C_\theta$.
\end{enumerate}
\end{dfn}
Denote the moduli stack of all stable qusimaps from $\mathbb P^1$ to $V\sslash G$ of class $\beta$ as $QG_{\beta}(V\sslash G)$. This moduli stack is the stable quasimap graph space in \cite{MR3126932}.
\begin{thm}[\cite{MR3126932}]\label{thm:quasimappot}
The stack $QG_{\beta}(V\sslash G)$ is a separated Deligne-Mumford stack of finite type, proper over the affine qouotient $Spec(H^0(V, \mc O_V)^G)$. It admits a canonical perfect obstruction theory if $V$ has at most lci singularities.
\end{thm}

Let  $[\zeta_0,\zeta_1]$ be coordinates on $\mathbb P^1$, then there is a standard $\C^*$ action, given by 
\begin{equation}
    t[\zeta_0,\zeta_1]=[t\zeta_0,\zeta_1], t\in \C^* \,.
\end{equation}
The $\C^*$-action on $\mbb P^1$ induces an action on $QG_{\beta}(V\sslash G)$. If a quasimap 
$(P,u)\in QG_{\beta}(V\sslash G)$ is $\C^*$-fixed, then all base points and the entire degree $\beta$ must be supported over the torus fixed points $[0:1]$ or $[1:0]$.

Consider the $\C^*$-fixed locus $F_\beta$ where everything is supported over the point $[0:1]\in \mbb P^1$ and the map $ev_\bullet: \mathbb P^1\backslash \{[0:1]\}\rightarrow V\sslash G$ is constant. 

 \begin{dfn}\label{dfn:smallIfunction}
Define the quasimap small $I$-function of a projective GIT quotient $V\sslash G$ as
 \begin{equation}
     I^{V\sslash G}(q,z)=1+\sum_{\beta\neq 0}q^{\beta} I_{\beta}^{V\sslash G}(z) \,\,, I_{\beta}^{V\sslash G}(z)=(ev_{\bullet})_*\left( \frac{[F_{\beta}]^{vir}}{e^{\C^*}(N_{F_{\beta}}^{vir})}  \right)\,,
 \end{equation}
 where the sum is over all $I$-effective classes of $(V, G,\theta)$.
 \end{dfn}

Assume $V\sslash G$ is quasi-projective, and $V$ admits a torus action ${S}$ which commutes with the action of $G$ on $V$. Hence the $S$ acts on $V\sslash G$.
The torus action is good if the torus fixed locus  $(V\sslash G)^S$ is a finite set.
There is an induced action of $S$ on $QG_\beta(V\sslash G)$ by sending $(P,u)\in QG_\beta(V\sslash G)$ to $s\circ u$ for each $s\in S$. 
 Moreover, the perfect obstruction theory is canonical $S$-equivariant \cite{MR3126932}. 
The same formula defines the equivariant small $I$-function of $V\sslash G$ as Definition \ref{dfn:smallIfunction} with all characteristic classes and pushforwards replaced by the equivariant version. 
We denote the equivariant quasimap small $I$-function of $V\sslash G$ by $I^{V\sslash G, S}(q, z)$.
\begin{thm}[\cite{MR3272909}]\label{thm:wallcrossing}
Assume $V\sslash G$ is a (quasi-)projective variety with a good torus action, and $V$ admits at most lci singularities. Then the following (equivariant) wall-crossing formula holds when $(V,G,\theta)$ is semi-positive,
\begin{equation}
    \mc J^{V\sslash G, S}( q, \mbf{t},z)=\frac{I^{V\sslash G,S}(q, z)}{I_0(q)}\,,
\end{equation} via  mirror map,
\begin{equation}
    \mbf{t}=\frac{I_1(q)}{I_0(q)} \in H^{\leq 2}(V\sslash G)\,,
\end{equation}
where the $I_0(q)$, $I_1(q)$ are defined as coefficients of $1$ and $z^{-1}$ in the following expansion,
\begin{equation}
    I^{V\sslash G,S}(q, z)=I_0(q)+\frac{I_1}{ z}(q)+O(\frac{1}{ z^2})\,.
\end{equation}
Further when $V\sslash G$ is Fano of index at least 2, $I_0(q)=1$ and $\mathbf{t}=0$. Hence in this situation
\begin{equation}
    \mc J^{V\sslash G,S}(q,z)=I^{V\sslash G,S}(q,z)\,.
\end{equation}
\end{thm}
\subsection{twisted  $I$-function}\label{sec:twistedIfunction}
Fix a valid input for GIT quotient $(V, G, \theta)$ and assume that $V$ has at most lci singularities.  Assume that $V\sslash G$ is quasiprojective  with a good torus action $S$. 
 Let $E$ be an equivariant $G$ bundle over $V$ with trivial $S$ action. Consider another torus $S^1$ acting on the fiber. Then we get an $S\times S^1$-equivariant bundle $E_G:=V^s\times_G E$ over $V\sslash_\theta G$. 
 
Assume $E$ is concave, which means $H^{0}(\mbb P^1, u^*(P\times_GE))=0$ for any stable quasimap $(P, u)$.
 Let
 \begin{equation}
     \pi: F_{\beta}(V\sslash_\theta G)\times \mbb P^1 \rightarrow F_{\beta}(V\sslash_\theta G)
 \end{equation}
 be the universal curve over $\C^*$-fixed locus supported at $[0:1]\in \mathbb{P}^1$, let $ \mathfrak{P}$ be the universal bundle over it, and let 
 \begin{equation}
     u: F_{\beta}(V\sslash_\theta G)\times \mbb P^1 \rightarrow \mathfrak P\times_G V
 \end{equation}
be the universal section.  
 Then $\mathfrak{P}\times_G E_G$ is a bundle over $\mathfrak P\times_G V$ and $R\pi_*(u^*)(\mathfrak{P}\times_G E)$ is an element in $K_{S\times G\times S^1}^\circ(F_{\beta})$.

 \begin{dfn}
Define the $S\times S^1$-equivariant $E$-twisted $I$-function as
 \begin{equation}
     I^{V\sslash_\theta G, S\times S^1, E}(q,z)=1+\sum_{\beta\neq 0}q^{\beta}I_\beta^{V\sslash_\theta G, S\times S^1, E}(z)\,,
 \end{equation}
where 
\begin{equation}
     I_{\beta}^{V\sslash_\theta G, S\times S^1, E}(z)=(ev_{\bullet})_*\left( \frac{[F_{\beta}]^{S\times S^1, vir}\cap e^{S\times S^1}(R^1\pi_*u^*E_G)}{e^{S\times S^1}(N_{F_{\beta}}^{vir})}  \right)\cdot e^{S\times S^1}(V^s\times_G E)\,.
     \end{equation}
 \end{dfn}
 Alternatively,  we may view the total space of the bundle $E_G\rightarrow V\sslash G$ as a quasiprojective variety $E_G$ admitting a good torus action $S^1\times S$.
The $S^1\times S$-fixed locus on the total space $E_G$ is the same as the $S$-fixed locus on $V\sslash G$.
 The twisted $I$-function defined above coincides with the quasimap $I$-function of the total space $E_G$, see \cite{MR3272909}.
 
 If the total space $E_G$ is semi-positive: $\beta(\det(T_{V\sslash G}))+\beta(\det{E})\geq 0$, the wall-crossing theorem holds for the above twisted $I$-function $I^{V\sslash G, S\times S^1, E}$.

%%%%%%%%%%%%%%%%%%%%%%%%%%%%%%%%
\section{Quasimap  $I$-Functions}\label{Sec:quasimapI}
\subsection{Abelian/nonabelian correspondence for  $I$-functions}\label{Sec:abelian/nonabelianforI}
We will mainly follow the work of Rachel Webb about the abelian-nonablian correspondence to display the quasimap $I$-functions of our examples, see \cite{abeliannonabelian:Webb,abelianizationlef:Webb}. 

Fix a valid input $(V, G, \theta)$ for a GIT quotient  $V\sslash_\theta G$, and we assume that $V$ has at most lci singularities.
Let $T=(\C^*)^{r}\subseteq G$ denote the maximal torus of $G$ and let $W_T=N_T/T$ denote the Weyl group. 
We will use a letter $w$ to represent a general element in the Weyl group $W_T$ and its representative in $N_T$ with abuse of notation. 
Notice that any character $\theta$ of $G$ is also a character of $T$ by the inclusion $\chi(G)\subseteq \chi(T)$. 
We denote the semistable, stable and unstable locus of $V$ under the action of $T$ in character $\theta$ by $V^{ss}_\theta(T)$, $V^s_\theta(T)$, and $V^{us}_\theta(T)$.
We may instead use notations $V^{ss}(T)$, $V^s(T)$, and $V^{us}(T)$ when there is no confusion for the character. 
Assume that $V^{ss}(T)=V^{s}(T)$ and $T$ acts freely on $V^{ss}(T)$, so that we obtain a smooth variety $V\sslash T:=V^{s}(T)/T$. 
Assume that a torus $S$ acts on $V$ and commutes with the action of G. Hence $S$ acts on $V\sslash G$ and $V\sslash T$. 
Assume that the torus action $S$ on $V\sslash G$ and $V\sslash T$ is good. 

The relations of $H^*(V\sslash G)$ and $H^*(V\sslash T)$ are studied by \cite{Ab_nab:EG,Ab_nab:Mar,Kir}. 
The rational map $V\sslash G\dashrightarrow V\sslash T$ is realized as follows
\begin{equation}\label{diag}
\begin{tikzcd}
V^s(G)/T \arrow[hookrightarrow]{r}{j} \arrow[]{d}{g}
  & V^s(T)/T \\
V^s(G)/G & 
\end{tikzcd}
\end{equation}
The Weyl group $W$ acts on $V^s(G)\slash T$, and therefore on $H^*(V^s(G)\slash T)$. 
 The above diagram induces the following classical identification for the cohomology groups
\begin{equation}\label{eqn:cohabelianization}
    H_S^*(V\sslash G, \Q)\iso H_S^*(V^s(G)\slash T,\Q)^W\,.
\end{equation}
See \cite[Proposition 2.4.1]{abeliannonabelian:Webb} for a proof for chow groups.
For each $\gamma \in H_S^*(V\sslash G,\Q)$, we say $\tilde \gamma \in H_S^*(V\sslash T, \Q)^W$ is a lifting of $\gamma$ if $j^*(\tilde\gamma)=g^*(\gamma)$. Such a lifting is usually not unique. 
For each $\eta\in \chi(G)\subset \chi(T)$, there are line bundles $V\times \C_\eta\in \op{Pic}^G(V)$ and $V\times \C_\eta\in \op{Pic}^T(V)$.
Also there is a natural map from $\op{Pic}^G(V)$ to $\op{Pic}^T(V)$ by restriction.
Therefore we have the following commutative diagram 
\begin{equation}
  \begin{tikzpicture}
\node (VG) at (-1,1) {$\op{Pic}^G(V)$};
\node[right=of VG] (VT) {$\op{Pic}^T(V)$};
\node[below=of VT] (XT) {$\chi(T)$};
\node [below=of VG] (XG) {$\chi(G)$};
\draw[->] (VG)--(VT) node [] {};
\draw[->] (XT)--(VT) node [] {};
\draw[->] (XG)--(VG) node [] {};
\draw[->] (XG)--(XT) node [] {};
\end{tikzpicture}  
\end{equation}
Taking $\op{Hom}(-,\Z)$ to the above diagram, we get the following commutative diagram, 
\begin{equation}\label{diag:pic}
  \begin{tikzpicture}
\node (VT) at (-1,1) {$\op{Hom}(\op{Pic}^T(V),\Z)$};
\node[right=of VT] (VG) {$\op{Hom}(\op{Pic}^G(V),\Z)$};
\node[below=of VT] (XT) {$\op{Hom}(\chi(T),\Z)$};
\node [below=of VG] (XG) {$\op{Hom}(\chi(G),\Z)$};
\draw[->] (VT)--(VG) node [midway,above] {$r_1$};
\draw[->] (VT)--(XT) node [midway, right] {$v_1$};
\draw[->] (VG)--(XG) node [midway, right] {$v_2$};
\draw[->] (XT)--(XG) node [midway,above] {$r_2$};
\end{tikzpicture}  
\end{equation}
For any $\xi\in \chi(T)$, denote by $\mc L_\xi:=V^s(T)\times_T\C_\xi$ the line bundle over $V\sslash T$. For any $\tilde\beta\in \op{Hom}(\op{Pic}^T(V),\Z)$, denote by $\tilde\beta(\xi):=\tilde\beta(c_1(\mc L_\xi))$, and it also equals $v_1(\tilde \beta)(\xi)$ by the above diagram. 
 
\begin{lem}(\cite{MR3126932})
When $r_1$ restricts to $I$-effective classes $\op{Eff}(V, T,\theta)\subseteq \op{Hom}(\op{Pic}^T(V),\Z)$ in the source and $\op{Eff}(V, G, \theta)\subseteq \op{Hom}(\op{Pic}^G(V),\Z)$ in the target, it has finite fibers. 
\end{lem}
 
%Denote the equivariant small $I$-functions of $V\sslash G$ and $V\sslash T$ by $I^{V\sslash G,S}$ and $I^{V\sslash T,S}$. 
 \begin{thm}[\cite{abeliannonabelian:Webb}]\label{thm:abelian-nonabelianequiv}
The equivariant quasimap small $I$-functions of $V\sslash G$ and $V\sslash T$ satisfy 
 \begin{equation}\label{eqn:abeliannonabelianforI}
     g^*I_{\beta}^{V\sslash G,S}(z)=j^*\left[ \sum_{\tilde\beta\rightarrow \beta} \prod_{\rho}\frac{\prod_{k\leq \tilde\beta(\rho)}(c_1(\mathcal{L}_{\rho})+kz)}{\prod_{k\leq 0}(c_1(\mathcal{L}_{\rho})+kz)}I_{\tilde \beta}^{V\sslash T,S}(z)\right]
 \end{equation}
 where the sum is over all preimages $\tilde\beta$ of $\beta$ under the map $r_1$ in above diagram \eqref{diag:pic} and the product is over all roots $\rho$ of $G$. 
 \end{thm}

Consider a $G$-equivariant bundle $E$ over $V$, and assume $s$ is a $G$-equivariant regular section of the bundle $E\times V\rightarrow V$. Let $Z:=Z(s)\subseteq V$ be the zero locus of $s$.
Taking $Z$ into consideration, we can extend the diagram \eqref{diag} to
 \begin{equation}
\begin{tikzcd}
Z^{s}(G)\slash T \arrow[hookrightarrow]{r}{k} \arrow[]{d}{\phi}&V^s(G)/T \arrow[hookrightarrow]{r}{j} \arrow[]{d}{g} & V\sslash T \\
Z\sslash G \arrow[hookrightarrow]{r}{i}  & V\sslash G&
\end{tikzcd}
\end{equation}
 and extend the diagram \eqref{diag:pic}  to
 \begin{equation}
\begin{tikzcd}
\op{Hom}(\op{Pic}^T(Z),\Q) \arrow[hookrightarrow]{r}{k_*} \arrow[]{d}{}&\op{Hom}(\op{Pic}^T(V),\Q)  \arrow[]{d}{r_1}  \\
\op{Hom}(\op{Pic}^G(Z),\Q) \arrow[hookrightarrow]{r}{i_*}  & \op{Hom}(\op{Pic}^G(V),\Q)
\end{tikzcd}
\end{equation}
For each $\xi \in \chi(T)$, and $\beta\in \op{Home}(\op{Pic}^T(V),\Z)$,
denote
\begin{equation}
    C(\beta,\xi):=\frac{\prod_{k\leq 0}(c_1(\mathcal{L}_\xi)+kz)}{\prod_{k\leq \beta(\xi)}(c_1(\mathcal{L}_\xi)+kz)}\,.
\end{equation}
Assume that the torus $S$ acts on $Z$ and is good. 
The equivariant $I$-functions of $Z\sslash G$ and $V\sslash T$ satisfy the following relation, which can be viwed as an abelian/nonabelian lefchetz theorem. 
\begin{thm}[\cite{abeliannonabelian:Webb,abelianizationlef:Webb}]\label{thm:abeliannonabelianlefchetz}
 Assume that weights of $E$ with respect to the action of $T$ are $\epsilon_j$, for $j=1,\ldots,m$, and $\rho_i$ for $i=1,\ldots,r$ are roots of $G$. Then for a fixed $\delta\in \op{Hom}(\op{Pic}^G(V),\Q) $,  we have the following relation between $I$-functions of $Z\sslash G$ and $V\sslash T$,
 \begin{equation}
    \sum_{\beta\rightarrow \delta} \phi^*I_{\beta}^{Z\sslash G, S}(z)=\sum_{\tilde\delta\rightarrow \delta}
    \left(\prod_{i=1}^m C(\tilde \delta, \epsilon_i)^{-1} \right)\left(\prod_{i=1}^r  C(\tilde \delta, \rho_i)^{-1}  \right) k^*j^* I_{\tilde\delta}^{V\sslash T, S}(z)
 \end{equation}
  where $\tilde\delta\in \op{Hom}(\op{Pic}^T(V),\Q)$ are preimages of $\delta$ via $r_1$, and $\beta\in \op{Hom}(\op{Pic}^G(Z),\Q)$.
\end{thm}

 \subsubsection{$I$-functions of $A_n$-type quivers}\label{subsec:Ifunctionbm}
 In this section, we mainly apply the abelian-nonabelian correspondence for $I$-functions to $A_n$-type quivers in Example \ref{ex:flag} and generalized $A_n$-type quivers in Example \ref{ex:tautflag}. We would use the notations in Section \ref{Sec:quiver} about quiver varieties.
 Let $(V, G, \theta)$ be the input data for $F(N_1,\ldots, N_D)$. There is a good torus action $S=(\C^*)^{N_D}$ on $F(N_1,\ldots, N_D)$ as follows,
\begin{equation}\label{eqn:Saction}
    (A_1,\ldots,A_{D-1})t=(A_1,\ldots,A_{D-1}t^{-1}), \,\,t=(t_1,\ldots,t_{N_D})\in S\,,
\end{equation}
where we have identified an element $t\in (\C^*)^{N_D}$ with a diagonal matrix $\op{diag}(t_1,\ldots,t_{N_D}) \in GL(N_D)$.
One can check that the $S$-fixed points are in one to one correspondence with the following sequences of subsets,
\begin{equation}\label{fixed:flag}
    \mathfrak{F}^{bm}=\{\vec C_{[N_1]}\subset \vec C_{[N_2]}\subseteq\cdots \subseteq [N_D]\}\,,
\end{equation}
where $[N_i]$ denotes a set of integers $\{1,\ldots, N_i\}$ and $\vec C_{[N_i]}=\{f_1<f_2<\ldots<f_{N_i}\}$ denotes a set of arbitrary distinct $N_i$ integers in $[N_{i+1}]$.
We denote the equivariant parameters of the $S$-action by $\lambda_1,\ldots, \lambda_{N_D}$.

Let $i_Q: Q\rightarrow F(N_1,\ldots,N_D)$ be an inclusion map from an $S$-fixed point $Q\in \mathfrak F^{bm}$ to $F(N_1,\ldots, N_D)$. Then the localization theorem of cohomology \cite{ATIYAH19841} states that
\begin{equation}\label{eqn:localization}
    {H}_{S}^*(F(N_1,\ldots, N_D))\iso \bigoplus_{Q\in \mathfrak F^{bm}}\op{H}^*_S(Q)\,.
\end{equation}

A stable quasimap $(\mathcal P, \sigma)$ from $\mathbb{P}^1$ to $F(N_1,\ldots,N_D)$ is equivalent to the following ingredients: 
 \begin{itemize}
     \item a bundle
     \begin{equation}
         \oplus_{i=1}^{D-2}\oplus_{I=1}^{N_i}\oplus_{J=1}^{N_{i+1}}\mc O_{\mbb{P}^1}(m_I^{(i)}-m_J^{(i+1)})\bigoplus\oplus_{I=1}^{N_{D-1}}\mc O_{\mbb P^1}(m_I^{(D-1)})^{\oplus N_D}
     \end{equation}
      over $\mathbb{P}^1$, where $m_{I}^{(i)}\in \Z$;
     \item a section $\sigma$ of the above bundle which maps $\mbb P^1$ to $V^{ss}(G)$ except for finite many points.
 \end{itemize}
 Denote $\vec m^{(i)}:=(m^{(i)}_1 \ldots m^{(i)}_{N_i})$. 
Then those integer vectors $(\vec m^{(1)}\ldots \vec m^{(D-1)})\in \oplus_{i=1}^{D-1}\Z^{N_i}$ that make the above two items hold must satisfy the following conditions,
\begin{enumerate}[label=\roman*]
  %   \item for all $i$ and $I$, $m_{I}^{(i)}\in \Z_{\geq 0}$ and $|\vec m^{(i)}|=m^{(i)}$.
     \item for a fixed $i$ and for each $I\in \{1,\ldots, N_i\}$, $\exists J_I\in \{1,\ldots, N_{i+1}\}$, s.t. $m_I^{(i)}-m_{J_J}^{(i+1)}\geq 0$, $i=1,\ldots, D-2$,
     \item for each fixed $i$ and for the index $(I,J_I)$ in the above item,  the $N_i\times N_{i+1}$ matrix whose $(I,J_I)$ entries are 1  and all other entries are  0 is nondegenerate.
 \end{enumerate}
 We denote the set of those $(\vec m^{(i)})$ by $\op{Eff}^T_{bm}$.
 
 The map $r_1: Hom(\op{Pic}^T(V),\Z)\rightarrow Hom(\op{Pic}^G(V),\Z)$ in commutative diagram \eqref{diag:pic} sends each $(\vec m^{(i)})\in \op{Eff}^T_{bm}$ to $(\abs{\vec m^{(i)}})\in \Z^{D-1}$ where $\abs{\vec m^{(i)}}=\sum_{I=1}^{N_i}m^{(i)}_I$. 
By the definition of effective classes, the images of $\op{Eff}^T_{bm}$ via $r_1$ are all $I$-effective classes for $F(N_1,\ldots, N_D)$ which we denote by $\op{Eff}_{bm}$.

Let $x^{(i)}_I$, $I=1,\ldots, N_i$, be chern roots of the dual bundles of the universal bundles $S_i$: 
\begin{equation}
    S_1\subseteq S_2\ldots\subseteq S_{D-1}\subseteq \C^{N_D}\,.
\end{equation}
%There is another interpretation for $x^{(i)}_I$. Let $\pi_I^{(i)}:(\C^*)^{N_i}\rightarrow \C^*$ be a character of $T$ given by the $I$-th projection  and $\mc L_{\pi^{(i)}_I}=V^s(T)\times_T\C_{\pi_I^{(i)}}$ be the bundle over $V\sslash T$. Then we have
%$x^{(i)}_{I}=c_1(\mathcal L_{\pi_I^{(i)}})$.

 %and then we can write down the equivariant small $I$-function of $F(N_1,\ldots, N_D)$.
 \begin{lem}\label{lem:Ifuncofflagbm}
The equivariant quasimap small $I$-function of a flag variety $F(N_1,\ldots,N_D)$, pulled back to $H_S^*(V^{ss}(G)/T,\Q)^{W_T}$ via $g^*$ in diagram \eqref{diag}, is given as follows,
\begin{align}
    &g^*I^{F,S}(\vec q,z)=\sum_{(m^{i})\in \op{Eff}_{bm}}I^{F,S}_{(m^{i})}(z)\prod_{i=1}^{D-1}q_{i}^{m^{(i)}}\,,\\ 
    & I^{F,S}_{(m^{i})}(z)=\sum_{\substack{\abs{\vec m^{(i)}}=m^{(i)}\\ (\vec m^{(i)})\in \op{Eff}_{bm}^T}} I^{F,S}_{(\vec m^{i})}(z)\,,
\end{align}
where 
\begin{equation}\label{I:flag}
I^{F,S}_{(\vec m^{i})}(z)= \prod_{i=1}^{D-1}\prod_{I\neq J}^{N_i}
 \frac{\prod_{l\leq m_I^{(i)}-m_J^{(i)}}(x_I^{(i)}-x_{J}^{(i)}+zl)}
 {\prod_{l\leq 0}(x_I^{(i)}-x_{J}^{(i)}+zl)}\prod_{I=1}^{N_i}\prod_{J=1}^{N_{i+1}}
 \frac{\prod_{l\leq 0}(x_I^{(i)}-x_J^{(i+1)}+lz)}
 {\prod_{l\leq m_I^{(i)}-m_J^{(i+1)}}(x_I^{(i)}-x_J^{(i+1)}+lz)}\,.
 \end{equation}
In the above formula, in order to simplify the expression, we have made the assumptions that $x^{(N)}_J$ are equivariant parameters $\lambda_{J}$ and $\vec m^{(D)}_J=\vec 0$. 
 \end{lem}
 \begin{proof}
 We only have to prove that the expression $I^{F,S}_{(\vec m^{i})}(z)$ is equal to the right hand side of the Equation \eqref{eqn:abeliannonabelianforI}. Roots $\rho$ of the group $G$ can viewed as elements of $\chi(T)=\prod_{i=1}^{D-1}\chi((\C^*)^{N_i})$. 
 Let $\vec e^{(i)}_I=(0\,\ldots\, 1\,\ldots\,0)\in \Z^{N_i}$ be the unit vector with the $I$-th component being 1 and all other components being zero. 
 It can be viewed as an element in $\oplus_{i=1}^{D-1}\Z^{N_i}$ by natural embedding: $(\vec 0\ldots \vec e^{(i)}_I\ldots\vec 0)$.  
 Roots of $G$ can then be expressed as $\vec e^{(i)}_I-\vec e^{(i)}_J$ in $\chi(T)=\oplus_{i=1}^{D-1}\Z^{N_i}$, for $i=1,\ldots, D-1$, and $I, J\in [N_i]$.  
 Then we can find the first factor in the expression of $I^{F,S}_{(\vec m^{i})}(z)$ is the product over roots in \eqref{eqn:abeliannonabelianforI}, and the second factor is $j^*I^{V\sslash T, S}_{\tilde \beta}(z)$. 
 \end{proof}

We want to mention that 
a flag variety's equivariant small $I$-function has been investigated in \cite{abelian/nonabelian:CKB} via a different method.

 Let's continue with the equivariant quasimap small $I$-function of the total space of $N_0$ copies of the tautological bundle over a flag variety: $S_1^{\oplus N_0}\rightarrow F(N_1,\ldots, N_D)$. We still use $(V, G, \theta)$ to denote the input data of the GIT quotient by abuse of notation. 
 Now we have to consider the torus action $S^2=(\C^*)^{N_0}\times (\C^*)^{N_D}$ on the quasiprojective variety  $S_1^{\oplus N_0}\rightarrow F(N_1,\ldots, N_D)$ in the following way,
\begin{equation}\label{eqn:torusS2}
    (A_0\ldots, A_{D-1})(s,t)=(sA_0,A_1,\ldots, A_{D-1}t^{-1})\,,\, (s, t)\in (\C^*)^{N_0}\times (\C^*)^{N_D}\,.
\end{equation}
The $S^2$-fixed locus in $S_1^{\oplus N_0}\rightarrow F(N_1,\ldots, N_D)$ is exactly the same with the $S$-fixed locus in $F(N_1,\ldots,N_D)$ which is $\mathfrak F^{bm}$ in the Equation \eqref{fixed:flag}. 
Furthermore, one can check that the $I$-effective classes of  $S_1^{\oplus N_0}\rightarrow F(N_1,\ldots, N_D)$ are also the same with those of $F(N_1,\ldots,N_D)$.

Denote equivariant parameters of the torus $(\C^*)^{N_0}$ by $\eta_A$, $A=1,\ldots, N_0$ and equivariant parameters of the torus $(\C^*)^{N_D}$ by $\lambda_i,\, i=1,\ldots,{N_D}$. 
\begin{lem}\label{lem:Ifuncoftautflag}
Applying the abelian-nonabelian correspondence for the $I$-function in Theorem \ref{thm:abelian-nonabelianequiv}, we obtain that the equivariant quasimap small
 $I$-function of $S_1^{\oplus N_0}\rightarrow F(N_1,\ldots,N_D)$ pulled back to $H^*(V^s(G)\slash T)^{W_T}$ is as follows,
 \begin{align}
      &g^*I^{tF,S^2}(\vec q,z)=\sum_{( m^{(i)})\in \op{Eff}_{bm}} I^{tF, S^2}_{( m^{(i)})}(z) \prod_{i=1}^{D-1}q_i^{m^{(i)}}\,,\\
&
     I^{tF, S^2}_{( m^{(i)})}(z)=\sum_{\substack{\abs{\vec m^{(i)}}=m^{(i)}\\ (\vec m^{(i)})\in \op{Eff}_{bm}^T}}I^{tF, S^2}_{( \vec m^{(i)})}(z)\,,
 \end{align}
 where 
\begin{align}\label{eqn:Itautflag}
I^{tF, S^2}_{(\vec m^{(i)})}(z)=
 &\prod_{i=1}^{D-1}\prod_{I\neq J}^{N_i}
 \frac{\prod_{l\leq m_I^{(i)}-m_J^{(i)}}(x_I^{(i)}-x_{J}^{(i)}+lz)}
 {\prod_{l\leq 0}(x_I^{(i)}-x_{J}^{(i)}+lz)}
 \prod_{A=1}^{N_0}\prod_{J=1}^{N_1}
 \frac{\prod_{l\leq 0}(\eta_A-x_J^{(1)}+lz)}{\prod_{l\leq -m_J^{(1)}}(\eta_A-x_J^{(1)}+lz)}\nonumber\\
 &\prod_{i=1}^{D-1}\prod_{I=1}^{N_i}\prod_{J=1}^{N_{i+1}}
 \frac{\prod_{l\leq 0}(x_I^{(i)}-x_J^{(i+1)}+lz)}
 {\prod_{l\leq m_I^{(i)}-m_J^{(i+1)}}(x_I^{(i)}-x_J^{(i+1)}+lz)}\,.
\end{align}
\end{lem}
\begin{proof}
The proof is exactly the same with that for $F(N_1,\ldots,N_D)$ in Lemma \ref{lem:Ifuncofflagbm}, which is omitted.
\end{proof}
A flag variety is always semi-positive, so the wall-crossing Theorem \ref{thm:wallcrossing} holds for $F(N_1,\ldots, N_D)$. 
 In particular, a  flag variety is Fano of index at least 2 if 
\begin{equation}
    N_1<N_2\ldots <N_D\,.
\end{equation} 
In this situation, the wall-crossing Theorem \ref{thm:wallcrossing} holds with trivial variable change.
The local target $S_1^{\oplus N_0}\rightarrow F(N_1,\ldots, N_D)$ is semi-positive when $N_2\geq N_0$ and Fano of index at least 2 when 
\begin{equation}
    1\leq N_1<\ldots<N_D\,, \,\,
    N_0+2\leq N_2\,. 
    \end{equation}
In this situation, the wall-crossing theorem holds for $S_1^{\oplus N_0}\rightarrow F(N_1,\ldots, N_D)$ via the trivial mirror map. 

\subsubsection{$I$-functions of varieties after quiver mutations}\label{subsec:Ifunctionam}
Revisit Example \ref{ex:dualofflag} and use the notations in that example. 
$\mc Z^1$ and $\mc X^2$ are the two varieties we have constructed after we perform a quiver mutation at a gauge node $k\neq 1$ and the node $k=1$ respectively to the quiver in Figure \ref{diag:flag}. 

Denote the input data of the quiver variety $\mc X^1$ corresponding to the quiver in Figure \ref{pic:dualofflag2D}  by $(\tilde V, \tilde G, \tilde\theta)$.
There is a good torus action $S=(\C^*)^{N_D}$ on $\mc X^1$ coming from the rightmost frame node as follows.
For any $(A_1,\ldots, A_{k-1},B,A_{k+1},\ldots, A_{D-1})\in \tilde V$, and $t\in S$, 
\begin{equation}\label{eqn:torusactionondualflag}
    (A_1,\ldots, A_{k-1},B,A_{k+1},\ldots, A_{D-1})t\cdot=
    (A_1,\ldots, A_{k-1},B,A_{k+1},\ldots, A_{D-1}t^{-1})\,.
\end{equation}
The above torus action commutes with $G$-action and preserves the relation $BA_{k}=0$, so it acts on $\mc Z^1$.
One can check that under the above torus action, torus fixed points in $\mc Z^1$ are in one to one with sequences of subsets,
\begin{align}\label{eqn:fixedpointsam}
   \mathfrak{F}^{am}=\{ \vec C_{[N_1]}\subset \cdots\subset \vec C_{[N_{k-1}]}\subset \vec C_{[N_{k+1}]}\subset\cdots\subseteq [N_D],\,\vec C_{[N_k']}\subseteq \vec C_{N_{k+1}},\, \vec C_{[N_k']}\cap \vec C_{[N_{k-1}]}=\emptyset\}\,.
\end{align}
The last requirement $\vec C_{[N_k']}\cap \vec C_{[N_{k-1}]}=\emptyset$ arises to make those points in $\mc Z^1$.

 A stable quasimap $(\mathcal P, \sigma)$ from $\mathbb{P}^1$ to $\mc X^1$ is equivalent to the following ingredients,
 \begin{itemize}
     \item a bundle 
     \begin{align}
         &\oplus_{i\neq k-1,k}^{D-2}\oplus_{I=1}^{N_i}\oplus_{J=1}^{N_{i+1}}\mc O_{\mbb{P}^1}(m_I^{(i)}-m_J^{(i+1)})
     \bigoplus \oplus_{I=1}^{N_{k-1}}\oplus_{J=1}^{N_{k+1}}\mc O_{\mbb P^1}(m_{I}^{(k-1)}-m_{J}^{(k+1)}) \nonumber\\
     &\bigoplus\oplus_{I=1}^{N_{k+1}}\oplus_{J=1}^{N_{k}'}\mc O_{\mbb P^1}(m_{I}^{(k+1)}-m_{J}^{(k)}) 
     \bigoplus\oplus_{I=1}^{N_{D-1}}\mc O_{\mbb P^1}(m_I^{(D-1)})^{\oplus N_D}
     \end{align}
     over $\mathbb{P}^1$, where $m_{I}^{(i)}\in \Z$,
     \item a section $\sigma$ of the above bundle such that it maps $\mbb P^1$ to $V^{ss}(G)$ except for finite points. 
 \end{itemize}
By the similar reasoning with the last subsection \ref{subsec:Ifunctionbm},
the vector $(\vec m^{(1)},\ldots, \vec m^{(D-1)})\in \oplus_{i=1}^{k-1}\Z^{N_i}\oplus \Z^{N_k'}\oplus_{i=k+1}^{D-1}\Z^{N_i}$ that makes the above two items hold if and only if the following conditions are satisfied.
\begin{enumerate}[label=\roman*]
   % \item  $\abs{ \vec m^{(i)}}=m^{(i)}$ for all $i=1, \ldots, D-1$,
    \item For each $i\in \{1,\ldots, D-1\}\backslash \{k-1,k\}$ and $I\in [N_i]$, $\exists J_I\in [N_{i+1}]$,
    such that $m_{I}^{(i)}-m^{(i+1)}_{J_I}\geq 0$; 
    for each $I\in [N_{k-1}]$, $\exists\, J_I\in [N_{k+1}]$, such that $m_I^{(k-1)}-m_{J_I}^{(k+1)}\geq 0$; 
    for each $I\in [N_{k}']$, $\exists\, J_I\in [N_{k+1}]$, such that $-m_{I}^{(k)}+m_{J_I}^{k+1}\geq 0$.
    \item For each $i\in \{1,\ldots, D-1\}$, and the related index $(I,J_I)$ above, the matrix whose $(I,J_I)$-entries are 1 and all other entries are zero is nondegenerate.
\end{enumerate}
We denote the set of vectors $(\vec m^{(1)},\ldots, \vec m^{(D-1)})$ satisfying the above two conditions by $\op{Eff}^{T}_{am}$.

The map $r_1: Hom(\op{Pic}^T(V),\Z)\rightarrow Hom(\op{Pic}^G(V),\Z)$ sends each $(\vec m^{(1)},\ldots, \vec m^{(D-1)})$ to $(\abs{\vec m^{(1)}},\ldots, \abs{\vec m^{(D-1)}})$, and the images of $\op{Eff}^{T}_{am}$ are $I$-effective classes of $\mc X^1$, which we denote  by $\op{Eff}_{am}$.

 \begin{lem}\label{lem:IfunctionofZ1}
Applying the abelian-nonabelian correspondence in Theorem \ref{thm:abeliannonabelianlefchetz}, we obtain the equivariant quasimap small $I$-function of $\mc Z^1$.
 \begin{align}\label{eqn:Iflagam1}
     &g^*I^{\mc Z^1,S}(\vec q', z)=\sum_{m^{(i)}\in \op{Eff}_{am}}I^{\mc Z^1, S}_{(m^{(i)})}(z)\prod_{i=1}^{D-1}(q_{i}')^{m^{(i)}}\,,\\
     & I^{\mc Z^1, S}_{(m^{(i)})}(z)=\sum_{\substack{\abs{\vec m^{(i)}}=m^{(i)}\\
    (\vec m^{(i)})\in \op{Eff}^T_{am} }} I^{\mc Z^1, S}_{(\vec m^{(i)})}(z)\,,
 \end{align}
 with 
\begin{align}\label{eqn:Iflagam}
    I^{\mc Z^1,S}_{(\vec m^{(i)})}(z)=&\prod_{\substack{i=1,\\i\neq k-1,k}}^{D-1}\prod_{I\neq J}^{N_i}
    \frac{\prod_{l\leq m_{I}^{(i)}-m_{J}^{(i)}}(x_{I}^{(i)}-x_{J}^{(i)}+lz)}
    {\prod_{l\leq 0}(x_{I}^{(i)}-x_{J}^{(i)}+lz)}
    %\prod_{\substack{i=1,\\i\neq k-1,k}}^{D-1}
    \prod_{I=1}^{N_i}\prod_{J=1}^{N_{i+1}}
    \frac{\prod_{l\leq 0}(x_{I}^{(i)}-x_{J}^{(i+1)}+lz)}
    {\prod_{l\leq m_{I}^{(i)}-m_{J}^{(i+1)}}(x_{I}^{(i)}-x_{J}^{(i+1)}+lz)}\nonumber\\
    &
    \prod_{I\neq J}^{N_{k-1}}
    \frac{\prod_{l\leq m_{I}^{(k-1)}-m_{J}^{(k-1)}}(x_{I}^{(k-1)}-x_{J}^{(k-1)}+lz)}
    {\prod_{l\leq 0}(x_{I}^{(k-1)}-x_{J}^{(k-1)}+lz)}
    \prod_{I=1}^{N_{k-1}}\prod_{J=1}^{N_{k+1}}
    \frac{\prod_{l\leq 0}(x_{I}^{(k-1)}-x_{J}^{(k+1)}+lz)}
    {\prod_{l\leq m_{I}^{(k-1)}-m_{J}^{(k+1)}}(x_{I}^{(k-1)}-x_{J}^{(k+1)}+lz)} \nonumber\\
    &
    \prod_{I\neq J}^{N_{k}'}
    \frac{\prod_{l\leq m_{I}^{(k)}-m_{J}^{(k)}}(x_{I}^{(k)}-x_{J}^{(k)}+lz)}
    {\prod_{l\leq 0}(x_{I}^{(k)}-x_{J}^{(k)}+lz)}
    \prod_{I=1}^{N_{k+1}}\prod_{J=1}^{N_{k}'}
    \frac{\prod_{l\leq 0}(x_{I}^{(k+1)}-x_{J}^{(k)}+lz)}
    {\prod_{l\leq m_{I}^{(k+1)}-m_{J}^{(k)}}(x_{I}^{(k+1)}-x_{J}^{(k)}+lz)}\nonumber\\
    &
    \prod_{I=1}^{N_k'}\prod_{J=1}^{N_{k-1}}
    \frac{\prod_{l\leq -m_{I}^{(k)}+m_{J}^{(k-1)}}(-x_{I}^{(k)}+x_{J}^{(k-1)}+lz)}
    {\prod_{l\leq 0}(-x_{I}^{(k)}+x_{J}^{(k-1)}+lz)}\,,
\end{align}
where we have made the assumptions that $x^{(D)}_J=\lambda_J$ and $m^{(D)}_J=0$.
\end{lem}
\begin{proof}
The variety $\mc Z^1$ is the zero locus of a section of the bundle $S_{k-1}^\vee\otimes S_{k}^\vee$ over $\mc X^1$.
The group $GL(N_i)$ for $i\neq k,k-1$ act trivially on the bundle.
Utilizing the notations in the proof of Lemma \ref{lem:Ifuncofflagbm}, the weights of the bundle $S_{k-1}^\vee\otimes S_{k}^\vee$ under the action of $T$ are vectors  $(\vec 0\ldots \vec e^{(k-1)}_{I}\, -\vec e^{(k)}_{J}\ldots\vec 0)$, for $I\in [N_{k-1}]$ and $J\in [N_{k}']$.
Then we are able to get the $I$-function by Theorem \ref{thm:abeliannonabelianlefchetz}.
\end{proof}
After we apply a quiver mutation to the gauge node $k=1$, we have obtained the quiver variety $\mc X^2$ in Example \ref{ex:dualofflag}.
Similar with $\mc Z^1$, $\mc X^2$ admits a good torus action $S$ as \eqref{eqn:torusactionondualflag}, and the torus fixed points are those in $ \mathfrak{F}^{am}$ in $\eqref{eqn:fixedpointsam}$ with  $ \vec C_{[N_{k-1}]}=\emptyset$. 
The semigroup of $I$-effective classes is also $\op{Eff}_{am}$, and the lifting to $Hom(\op{Pic}^T(\tilde V),\Z)$ via the $r_1$ map is $\op{Eff}^T_{am}$ by omitting a condition: for any $I\in [N_{k-1}]$, $\exists\, J_I\in [N_{k+1}]$, such that $m_I^{(k-1)}-m_{J_I}^{(k+1)}\geq 0$.

The equivariant quasimap small $I$-function of $\mc X^2$ is as follows, 
\begin{align}
    &I^{\mc X^2,S}(\vec q',z)=\sum_{(m^{(i)})\in  \op{Eff}_{am}} I^{\mc X^2,S}_{(m^{(i)})}(z)\prod_{i=1}^{D-1}(q_i')^{m^{(i)}}\,,\\
    &I^{\mc X^2,S}_{(m^{(i)})}(z) =\sum_{\substack{\abs{\vec m^{(i)}}=m^{(i)}\\
     (\vec m^{(i))})\in \op{Eff}_{am}^T}} I^{\mc X^2,S}_{(\vec m^{(i)})}(z)\,,
\end{align}
and 
 \begin{align}
     I^{\mc X^2,S}_{(\vec m^{(i)})}(z)=&\prod_{\substack{i=2}}^{D-1}\prod_{I\neq J}^{N_i}
    \frac{\prod_{l\leq m_{I}^{(i)}-m_{J}^{(i)}}(x_{I}^{(i)}-x_{J}^{(i)}+lz)}
    {\prod_{l\leq 0}(x_{I}^{(i)}-x_{J}^{(i)}+lz)}
    %\prod_{\substack{i=1,\\i\neq k-1,k}}^{D-1}
    \prod_{I=1}^{N_i}\prod_{J=1}^{N_{i+1}}
    \frac{\prod_{l\leq 0}(x_{I}^{(i)}-x_{J}^{(i+1)}+lz)}
    {\prod_{l\leq m_{I}^{(i)}-m_{J}^{(i+1)}}(x_{I}^{(i)}-x_{J}^{(i+1)}+lz)}\nonumber\\
    &
    \prod_{I\neq J}^{N_{1}'}
    \frac{\prod_{l\leq m_{I}^{(1)}-m_{J}^{(1)}}(x_{I}^{(1)}-x_{J}^{(1)}+lz)}
    {\prod_{l\leq 0}(x_{I}^{(1)}-x_{J}^{(1)}+lz)}
    \prod_{I=1}^{N_{2}}\prod_{J=1}^{N_{1}'}
    \frac{\prod_{l\leq 0}(x_{I}^{(2)}-x_{J}^{(1)}+lz)}
    {\prod_{l\leq m_{I}^{(2)}-m_{J}^{(1)}}(x_{I}^{(2)}-x_{J}^{(1)}+lz)}\,.
 \end{align}
 
Performing a quiver mutation at one gauge node of the quiver diagram of
 $S_1^{\oplus N_0}\rightarrow F(N_1,\ldots,N_D)$, we have constructed two varieties $\mc Z^3$ and $\mc Z^4$ in Example \ref{ex:dualoftautflag} depending on nodes we apply the mutation to.
We find $\mc Z^3$ is the total space of $N_0$-copies of the tautological bundle $S_1$ over $\mc Z^1$. 
The good torus action on $\mc Z^3$ is $S^2=(\C^*)^{N_0}\times (\C^*)^{N_D}$ as \eqref{eqn:torusS2}, which has torus fixed locus $\mathfrak{F}^{am}$ in  \eqref{eqn:fixedpointsam}.
The semigroup of $I$-effective classes of $\mc Z^3$ is $\op{Eff}_{am}$. 
By the same reasoning as Lemma \ref{lem:IfunctionofZ1}, we get the equivariant small I-function of $\mc Z^3$.
\begin{lem}\label{lem:Itauflagam}
The equivariant quasimap small $I$-function of $\mc Z^3$ can be written as
\begin{align}\label{eqn:Itautflagamk}
    &I^{\mc Z^3,S^2}(\vec q',z)=\sum_{(\vec m^{(i))}\in \op{Eff}^T_{am}}
    I^{\mc Z^1, S}_{(\vec m^{(i)})}(z)
    \prod_{A=1}^{N_0}\prod_{J=1}^{N_1}
 \frac{\prod_{l\leq 0}(\eta_A-x_J^{(1)}+lz)}{\prod_{l\leq -m_J^{(1)}}(\eta_A-x_J^{(1)}+lz)}\prod_{i=1}^{D-1}(q')^{\abs{\vec m^{(i)}}}\,.
\end{align}
\end{lem}
The situation for $\mc Z^4$ is a little different.
We may view $\mc Z^4$ as a subvariety in $S_2^{\oplus N_0}\rightarrow \mc X^2$ defined by $A_0B=0$. 
The semigroup of $I$-effective classes of $S_2^{\oplus N_0}\rightarrow \mc X^2$ is also $\op{Eff}_{am}$. 
The local target $S_2^{\oplus N_0}\rightarrow \mc X^2$ has good torus action $S^2$ whose torus fixed locus is $\mathfrak{F}^{am}$. 

\begin{lem}\label{lem:IofZ4}
 Applying the abelian-nonabelian correspondence in Theorem \ref{thm:abeliannonabelianlefchetz}, we get the $I$-function of $\mc Z^4$,
\begin{equation}
    g^*I^{\mc Z^4, S^2}(\vec q',z)=\sum_{(\vec m^{(i)})\in \op{Eff}_{am}^T}I^{\mc Z^4, S^2}_{(\vec m^{(i)})}(z) \prod_{i=1}^{D-1}(q_i')^{\abs{\vec m^{(i)}}}\,,
\end{equation}
and 
\begin{equation}\label{eqn:Itautflagam1}
   I^{\mc Z^4, S^2}_{(\vec m^{(i)})}(z)=I^{\mc X^2, S}_{(\vec m^{(i)})}(z)
    \prod_{A=1}^{N_{0}}\prod_{J=1}^{N_{2}}
    \frac{\prod_{l\leq 0}(\eta_A-x_{J}^{(2)}+lz)}
    {\prod_{l\leq -m_{J}^{(2)}}(\eta_A-x_{J}^{(2)}+lz)}
    \prod_{I=1}^{N_1'}
    \frac{\prod_{l\leq -m_{I}^{(1)}}(-x_{I}^{(1)}+\eta_A+lz)}
    {\prod_{l\leq 0}(-x_{I}^{(1)}+\eta_A+lz)}\,.
\end{equation}
\end{lem}

One can check that $\mc Z^1$ and $\mc X^2$ are Fano of index at least 2 when 
\begin{equation}\label{eqn:flagdualfano}
    1\leq N_1<N_2<\ldots <N_D\,,
\end{equation}
 and $\mc Z^3$ and $\mc Z^4$ are Fano of index at least $2$ when
\begin{equation}\label{eqn:dualtautfano}
    1\leq N_1<N_2<\ldots<N_D,\, N_0+2\leq N_2\,.
\end{equation}
Therefore, in this situation, the Theorem \ref{thm:wallcrossing} holds for all our examples with the trivial mirror map.

\section{Main theorems and proofs}\label{sec:proof}
We are ready to state our main theorems which induces the genus 0 Seiberg duality conjecture. In this section, we will always make $z=1$ in the expression of $I$-functions. Denote $I^{\mc X, S}(\vec q):=I^{\mc X, S}(\vec q, 1)$, and $I_\beta^{\mc X, S}:=I_\beta^{\mc X, S}(1)$.
\subsection{Statement of main theorem}
When a quiver mutation is performed to the quiver diagram of 
$F(N_1,\ldots, N_D)$ at a node $k$ in Example \ref{ex:dualofflag}, we have constructed $\mc Z^1$ if $k\neq 1$ and $\mc X^2$ if $k=1$.

\begin{thm}\label{thm:main} 
 The $I$-functions of pairs of varieties $F(N_1,\ldots, N_D)$ and $\mc Z^1$, $F(N_1,\ldots, N_D)$ and $\mc X^2$ satisfy the following relations.
\begin{enumerate}
    \item If $N_{k+1}\geq N_{k-1}+2$,
    \begin{equation}
    I^{F, S}(\vec q)=I^{\mc Z^1,S}(\vec q')\,,\, I^{F,S}(\vec q)=I^{\mc X^2,S}(\vec q')\,,
    \end{equation}
via the cluster transformations on k\"ahler coordinates
\begin{equation}\label{eqn:variablechangek}
    q_{k}'=q_k^{-1},\, q_{k+1}'=q_{k+1}q_k,\, q_i'=q_i\, \text{ for } i\neq k,\,k+1\,.
\end{equation}
\item If $N_{k+1}=N_{k-1}+1$, then we have $N_{k-1}=0$, $N_1=1,\, N_2=1$ in the case $k=1$. The $I$-functions satisfy 
\begin{equation}
    I^{F,S}(\vec q)={e}^{(-1)^{N_{k}'}q_k}I^{\mc Z^1,S}(\vec q')\,,\, I^{F, S}(\vec q)={e}^{q_1}I^{\mc X^2,S}(\vec q')\,,
\end{equation}
via the cluster transformations on k\"ahler coordinates
\begin{equation}
    q_{k}'=q_k^{-1},\, q_{k+1}'=q_{k+1}q_k,\, q_i'=q_i\, \text{ for } i\neq k,\,k+1\,;
\end{equation}
\item If $N_{k+1}=N_{k-1}$, the $k=1$ case is trivial. In the $k\neq 1$ case, we have 
 \begin{equation}
    I^{F, S}(\vec q)=I^{\mc Z^1,S}(\vec q')\,,
    \end{equation}
  via
    \begin{equation}
        q_{k+1}'=\frac{q_kq_{k+1}}{1+q_k},\, q_{k-1}'={q_{k-1}}({1+q_k}),\,q_i'=q_i\, \text{  for } i\neq k,\,k+1\,.
    \end{equation}

\end{enumerate}

\end{thm}

\begin{thm}\label{thm:main2}
\begin{enumerate}
\item 
When a quiver mutation is performed to the quiver diagram of 
$S_1^{\oplus N_0}\rightarrow F(N_1,\ldots, N_D)$ at a gauge node $k\neq 1$, we have constructed $\mc Z^3$ in Example \ref{ex:dualoftautflag}. Then
$I^{tF,S^2}(q)$ and $I^{\mc Z^3,S^2}(\vec q')$ satisfy exactly the same relations as Theorem \ref{thm:main}. 
\item  When a quiver mutation is applied to the quiver diagram of $S_1^{\oplus N_0}\rightarrow F(N_1,\ldots, N_D)$ at the gauge node $k=1$, we have obtained the variety $\mc Z^4$ in the dual side.  The equivariant quasimap small $I$-functions $I^{tF,S^2}(\vec q)$ and $I^{\mc Z^4,S^2}(\vec q')$ satisfy the following relations:
\begin{enumerate}
    \item when $N_{2}\geq N_0+2$ and $N_2\geq N_0+1$, $I^{tF,S^2}(q)$ and $I^{\mc Z^4,S^2}(q')$ satisfy the same relations with item 1 and item $2$ in Theorem \ref{thm:main};
    \item when $N_0=N_2$, 
    \begin{equation}
    I^{tF,S^2}(\vec q)=(1+(-1)^{N_1'}q_1)^{\sum_{A=1}^{N_0}\eta_A-\sum_{F=1}^{N_2}x_{F}^{(2)}+N_1'}I^{\mc Z^4,S^2}(\vec q')\,,
    \end{equation}
    via the cluster transformations on k\"ahler coordinates,
         \begin{equation}
         q_1'=q_1^{-1}, q_{2}'=\frac{q_1q_{2}}{1+(-1)^{N_1'}q_1}\,.
    \end{equation}
In the above formula, $(1+(-1)^{N_1'}q_1)^{\sum_{A=1}^{N_0}\eta_A-\sum_{F=1}^{N_2}x_{F}^{(2)}+N_1'}$ represents a formal power series
  \begin{equation}
\sum_{m\geq 0}\frac{\prod_{l=0}^{m-1}(\sum_{A=1}^{N_0}\eta_A-\sum_{F=1}^{N_2}x_{F}^{(2)}+N_1'-l)}{m!} ((-1)^{N_1'}q_1)^m\,.
  \end{equation}
\end{enumerate}
\end{enumerate}
\end{thm}
Since all of our varieties $F(N_1,\ldots,N_D)$, $S_1^{\oplus N_0}\rightarrow F(N_1,\ldots, N_D)$, $\mc Z^1$, $\mc Z^2$, $\mc Z^3$, and $\mc Z^4$ are Fano varieties of index at least 2 
 under conditions
\begin{equation}
    1\leq N_1<N_2<\ldots<N_D,\,\, N_0+2\leq N_2\,,
\end{equation}
then we are led to the genus-zero Seiberg duality conjecture.
\begin{cor}\label{cor:seiberg}
When $1\leq N_1<\ldots<N_D$ and $N_0+2\leq N_2$, the equivariant small $\mc J$-functions  of $A_n$-type quivers in Example \ref{ex:flag} and Example \ref{ex:tautflag} and their dualities satisfy
the genus 0 Seiberg duality Conjecture \ref{conj}. 
\end{cor}

\subsection{Seiberg duality conjecture in the level of equivariant cohomology groups}
In this subsection, we will prove that the equivariant cohomology groups of two varieties before and after a quiver mutation are isomorphic. 
\begin{lem}\label{lem:fixedpoints1to1}
    There exists a bijection map
    \begin{equation}
        \iota: \mathfrak{F}^{bm}\rightarrow \mathfrak F^{am}\,.
    \end{equation}
\end{lem}
\begin{proof}
For any torus fixed point $Q=(\vec C_{[N_1]}\subset \vec C_{[N_2]}\subseteq\cdots \subseteq [N_D] )\in \mathfrak{F}^{bm}$, $\iota$ keeps each integer subset $\vec C_{N_{i}}$ for $i\neq k$ unchanged, and sends $\vec C_{N_k}$ to $\vec C_{N_k'}=\vec C_{N_{k+1}}\backslash \vec C_{N_k}$. One can check that this map $\iota$ is bijective. 
\end{proof}
\begin{thm}\label{thm:seibergforcohomology}
We have the following isomorphism among equivariant cohomology groups,
\begin{equation}
    H^*_S(F(N_1,\ldots, N_D))\iso H^*_S(\mc Z^1) \iso H^*_S(\mc Z^2)\,,
\end{equation}
and 
\begin{equation}
    H^*_{S^2}(S^{\oplus N_0}\rightarrow F(N_1,\ldots, N_D))\iso H^*_{S^2}(\mc Z^3) \iso H^*_{S^2}(\mc Z^4)\,.
\end{equation}
\end{thm}
\begin{proof}
Both $F(N_1,\ldots,N_D)$ and $\mc Z^1$ admit a good torus action $S=(\C^*)^{N_D}$, and the torus fixed loci for them are $\mathfrak{F}^{bm}$ and $\mathfrak{F}^{am}$. The localization theorem \cite{ATIYAH19841} states that
\begin{equation}
    {H}_{S}^*(F(N_1,\ldots, N_D))\iso \bigoplus_{Q\in \mathfrak F^{bm}}\op{H}^*_S(Q)\,,
\end{equation}
and 
\begin{equation}
    {H}_{S}^*(\mc Z^1)\iso \bigoplus_{P\in \mathfrak F^{am}}\op{H}^*_S(P)\,.
\end{equation}
Since $\mathfrak{F}^{bm}$ and $\mathfrak{F}^{am}$ are in one to one correspondence by Lemma \ref{lem:fixedpoints1to1}, 
we obtain that $H^*_S(F(N_1,\ldots, N_D))\iso H^*_S(\mc Z^1)$. Similar reasoning, we can get the isomorphism between  $H^*_S(F(N_1,\ldots, N_D))$ and $H^*_S(\mc X^2)$, and isomorphism among $H^*_{S^2}(S^{\oplus N_0}\rightarrow F(N_1,\ldots, N_D))$, $H^*_{S^2}(\mc Z^3)$ and $H^*_{S^2}(\mc Z^4)$.
\end{proof}
At the end of this subsection, we would like to outline our strategies to prove the main theorems. 
Let $(V,T\subset G,\theta)$ denote the input data of a quiver variety before a quiver mutation which represents $F(N_1, \ldots, N_D)$ and $S_1^{\oplus N_0}\rightarrow F(N_1,\ldots, N_D)$ in our examples, 
and let $(\tilde Z, \tilde T\subseteq \tilde G, \tilde \theta)$ denote the input data of a GIT quotient after a quiver mutation which represents $\mc Z^1,\,\mc Z^2,\, \mc Z^3,\, \mc Z^4$.  Assume that $V$ and $\tilde Z$ admit a common good torus action $S$. 

Our goal is to to prove that $I^{V\sslash G,S}(\vec q)=I^{\tilde Z\sslash {\tilde G},S}(\vec q')$ under a suitable variable change. 
Since we have proved that equivariant cohomology groups of $V\sslash G$ and $\tilde Z\sslash {\tilde G}$ are isomorphic, we only have to prove that for each $Q\in \mathfrak F^{bm}$ and $P:=\iota(Q)\in \mathfrak F^{am}$
\begin{equation}\label{eqn:Ifunctionlocalization}
    i_Q^*I^{V\sslash G,S}(\vec q)=i_P^*I^{\tilde Z\sslash {\tilde G},S}(\vec q')\,,
\end{equation}
where $i_Q:Q\rightarrow V\sslash G$ and $i_P: P\rightarrow \tilde Z\sslash{\tilde G}$ are inclusions.

 Consider the following commutative diagram,
\begin{equation}
    \begin{tikzcd}
 &H^*_S(V^s(G)/T)^{W_T} \arrow[]{r}{} & H^*_{S}(\tilde Z^s(
\tilde G)/{\tilde T})^{W_{\tilde T}}  &  \\
\oplus_{Q\in \mathfrak F^{bm}} H^*_S(Q)\arrow[]{r}{i_*}&H^*_S(V^s(G)/G) \arrow[]{r}{}\arrow[]{u}{g^*}  & H^*_{S}(\tilde Z\sslash \tilde G)\arrow[]{u}{g^*} &\oplus_{P\in \mathfrak F^{am}}H^*_S(P)\arrow[]{l}[swap]{j_*}
\end{tikzcd}
\end{equation}
Each arrow in the above diagram is an isomorphism.

Let $Q'\in (V^s(G)\slash T)^S$ be any lifting of $Q$ via the map $g$ in diagram \eqref{diag}. This lifting is not unique, and one can show that for all of our examples, any two distinct liftings are connected by an element $w\in W_T$.

Let $g^*(I^{V\sslash G,S})$ be the lifting of $I^{V\sslash G,S}$. By the isomorphism \eqref{eqn:cohabelianization}, $g^*(I^{V\sslash G,S}) \in H^*_S(V^s(G)\slash T)^{W_T}$ is $W_T$ invariant, so $i_{Q'}^*(g^*I^{V\sslash G,S})$ is independent of the choice of $Q'$. 

Let $Q' \in (V^s(G)\slash T)^S$ and $P'\in (\tilde Z^s(\tilde G)\slash \tilde T)^S$ be arbitrary liftings of $Q$ and $P$. 
Then the Equation \eqref{eqn:Ifunctionlocalization} is equivalent to
\begin{equation}
     i_{Q'}^*(g^*I^{V\sslash G,S})=i_{P'}^*(g^*I^{\tilde Z\sslash {\tilde G},S})\,.
\end{equation}
We will prove this equation for all pairs of our varieties before and after a quiver mutation. 
In the following subsections, when we talk about the restriction of a small $I$-function $I^{V\sslash G, S}$ to some torus fixed point $Q$, we mean $i_{Q'}^*(g^*I^{V\sslash G,S})$. 
%%%%%%%%%%%%%%%%%%%%%%

\subsection{Proof for a fundamental building block}\label{sec:taugr}

In this section, we consider the fundamental building block. Consider the quiver diagram below with $N_1\leq N_2,\, N_0\leq N_2$,  which is a special case of Example \ref{ex:tautflag},
\begin{figure}[H]
    \centering
     \includegraphics[width=2in]{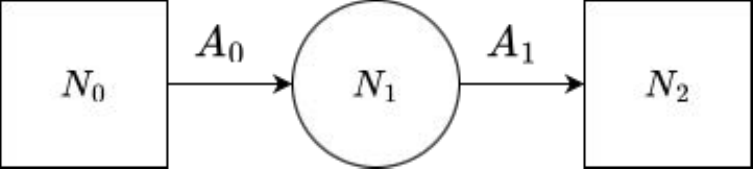}
    %\caption{Caption}
    %\label{fig:my_label}
\end{figure}
\noindent 
The quiver variety is the total space of $N_0$ copies of the tautological bundle  over a Grassmannian:  $S_1^{ N_0}\rightarrow Gr(N_1, N_2)$. Apply a quiver mutation, 
and we get a quiver diagram below with a potential $W=tr(BA_1A_0)$, where $N_1'=N_2-N_1$.
\begin{figure}[H]
    \centering
    \includegraphics[width=2in]{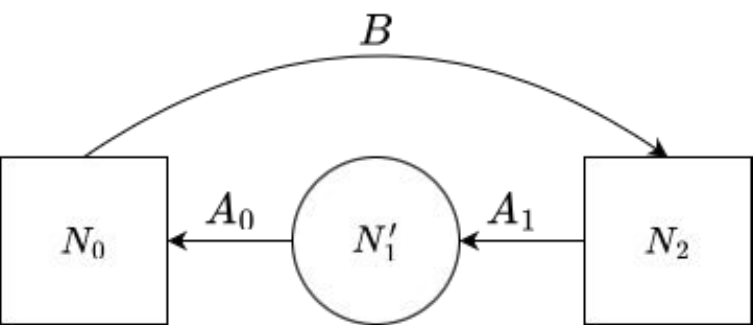}
    %\caption{Caption}
    %\label{fig:my_label}
\end{figure}
\noindent 
By the argument in Example \ref{ex:dualoftautflag}, we need to consider the negative phase of the gauge group: $\theta(g)=\det(g)^{\sigma}$, $\sigma<0$, for $g\in GL(N_1')$. The corresponding variety is the total space of $N_0$ copies of the dual tautological bundle over the dual Grassmannian $ (S_1^{\vee})^{\oplus N_0}\rightarrow Gr(N_1',N_2)$.

The genus-zero Seiberg duality conjecture holds for the fundamental building block if we can prove that 
 the equivariant quasimap small $I$-functions of $S_1^{\oplus N_0}\rightarrow Gr(N_1, N_2)$ and $(S_1^\vee)^{\oplus N_0}\rightarrow Gr(N_1', N_2)$ are equal.

Both $S_1^{\oplus N_0}\rightarrow Gr(N_1, N_2)$ and $(S_1^\vee)^{\oplus N_0}\rightarrow Gr(N_1', N_2)$ admit a good torus action $S^2=(\C^*)^{N_0}\times(\C^{*})^{N_2}$. 
The torus fixed locus in $S_1^{\oplus N_0}\rightarrow Gr(N_1, N_2)$ is $\mathfrak{F}^{bm}:=\{\vec C_{[N_1]}\subseteq [N_2]\}$ and that in $(S_1^\vee)^{\oplus N_0}\rightarrow Gr(N_1', N_2)$ is $\mathfrak{F}^{am}=\{\vec C_{[N_1']}\subseteq [N_{2}]\}$. 

The equivariant quasimap small ${I}$-function of $S_1^{\oplus N_0}\rightarrow Gr(N_1,N_2)$ denoted by ${I}^{Gr,S^2}(q)$ can be written as follows
\begin{equation}
   {I}^{Gr,S^2}(q)=\sum_{\vec d\in \Z^N_{\geq 0}} 
   \prod_{I\neq J}^{N_1}\frac{\prod_{l\leq d_I-d_J}(x_I-x_J+l)}{\prod_{l\leq 0}(x_I-x_J+l)}
    \prod_{I=1}^{N_1}\frac{\prod_{A=1}^{N_0}\prod_{l=0}^{d_I-1}(-x_I+\eta_A-l)}{ \prod_{F=1}^{N_2}\prod_{l=1}^{d_I}(x_I-\lambda_F+l)}q^{\abs{\vec d}}\,.
\end{equation}
The equivariant quasimap small $I$-function of $(S^\vee)^{\oplus N_0}\rightarrow Gr(N_1',N_2)$ denoted by $I^{Gr^\vee, S^2}(q')$ is
\begin{equation}
    {I}^{Gr^\vee,S^2}(q')=\sum_{\vec d\in \Z^{N_1'}_{\leq 0}}
    \prod_{I\neq J}^{N_1'}\frac{\prod_{l\leq d_I-d_J}(x_I-x_J+l)}{\prod_{l\leq 0}(x_I-x_J+l)}
    \prod_{I=1}^{N_1'}\frac{\prod_{A=1}^{N_0}\prod_{l=1}^{-d_I}(-x_I+\eta_A+l)}{ \prod_{F=1}^{N_2}\prod_{l=1}^{-d_I}(-x_I+\lambda_F+l)}(q')^{\abs{\vec d}}\,.
\end{equation}
For an arbitrary point $Q=(\{f_1<\ldots<f_{N_1}\}\subseteq [N_2])  \in \mathfrak F^{bm}$, denote the image $\iota(Q)$ by $P=(\{f_1'<\ldots<f_{N_1'}'\}\subseteq [N_2])\in \mathfrak F^{am}$. The restriction of $I^{Gr,S^2}$ to $Q$ is 
\begin{equation}\label{eqn:Ifungrbmres}
   i_{Q}^*I^{Gr,S^2}(q)=\sum_{\vec d\in \Z^N_{\geq 0}}
    \prod_{I\neq J}^{N_1}\frac{\prod_{l\leq d_I-d_J}(\lambda_{f_I}-\lambda_{f_J}+l)}{\prod_{l\leq 0}(\lambda_{f_I}-\lambda_{f_J}+l)}
    \prod_{I=1}^{N_1}\frac{\prod_{A=1}^{N_0}\prod_{l=0}^{d_I-1}(-\lambda_{f_I}+\eta_A-l)}
    {\prod_{F=1}^{N_2}\prod_{l=1}^{d_I}(\lambda_{f_I}-\lambda_F+l)}q^{\abs{\vec d}}\,.
\end{equation}
The restriction of $I^{Gr^\vee,S^2}$ to $P$ is  
\begin{equation}\label{eqn:Ifungramres}
   i^*_{P}I^{Gr^\vee,S^2}(q')=\sum_{\vec d\in \Z^{N_1'}_{\leq  0}}
    \prod_{I\neq J}^{N_1'}\frac{\prod_{l\leq d_I-d_J}(\lambda_{f_I'}-\lambda_{f_J'}+l)}{\prod_{l\leq 0}(\lambda_{f_I'}-\lambda_{f_J'}+l)}
    \prod_{I=1}^{N_1'}\frac{\prod_{A=1}^{N_0}\prod_{l=1}^{-d_I}(-\lambda_{f_I'}+\eta_A+l)}{ \prod_{F=1}^{N_2}\prod_{l=1}^{-d_I}(-\lambda_{f_I'}+\lambda_F+l)}{q'}^{\abs{\vec d}}\,.
\end{equation}

\begin{thm}[\cite{donghai,benini2015cluster}]\label{thm:Haidong}
\begin{enumerate} 
    \item When $N_2\geq N_0+2$, $i_{Q}^*{I}^{Gr,S^2}(q)=i^*_{P}{I}^{Gr^\vee,S^2}(q^{-1})$.
    \item When $N_2=N_0+1$, $i_{Q}^*{I}^{Gr,S^2}(q)={e}^{(-)^{N_1'}q}\, i^*_{P}{I}^{Gr^\vee,S^2}(q^{-1})$.
    \item When $N_2=N_0$, 
    $i_{Q}^*{I}^{Gr,S^2}(q)=(1+(-1)^{N_1'}q)^{\sum_{A=1}^{N_0}\eta_A-\sum_{F=1}^{N_2}\lambda_{F}+N_1'}i^*_{P}{I}^{Gr^\vee,S^2}(q^{-1})$.
\end{enumerate}
\end{thm}
\begin{proof}
The situations $N_2\geq N_0+2$ and $N_2=N_0+1$ have been proved by Hai Dong in his thesis \cite{donghai} which has no online resources, and the proof of the case $N_2=N_0$ can be found in \cite[appendix]{benini2015cluster}. We include the same proofs in appendix for readers' convenience. One can also find proofs in physical papers \cite[appendix]{MR3296161}\cite{gomis2016m2}.
\end{proof}
The above theorem implies that $I^{Gr, S^2}(q)$ and $I^{Gr^\vee, S^2}(q')$ satisfy the relations in Theorem \ref{thm:main2} item 2. 
When $N_2\geq N_0+2$, the wall-crossing Theorem \ref{thm:wallcrossing} holds with the trivial mirror map. 
Therefore we have proved the genus-zero Seiberg duality conjecture for the fundamental building block: $\mc J^{Gr,S^2}(q)=\mc J^{Gr^\vee, S^2}(q^{-1})$ when $N_2\geq N_0+2$.

\subsection{Proofs for Theorem \ref{thm:main}}
We will follow the outline in the above subsection to prove this Theorem, and to prove the Equation \eqref{eqn:Ifunctionlocalization} for $I^{F, S}(\vec q)$ and $I^{\mc Z^1, S}(\vec q')$.
Without loss of generality, we consider the special pair of torus fixed points 
\begin{equation}
    Q_0=\{[N_1]\subseteq [N_2]\subseteq\ldots\subseteq [N_D] \}\,,
\end{equation}
 and $P_0=\iota(Q_0)$.
We firstly restrict $I^{F, S}(\vec q)$ in Lemma \ref{lem:Ifuncofflagbm} to the torus fixed point $Q_0$
Then $x_{I}^{(i)}\rvert_{Q_0}=\lambda_{I}$ for each $i$ and
 \begin{equation}
 i_{Q_0}^*I^{F,S}(\vec q)=\sum_{(\vec m^{(i)})\in \op{Eff}^T_{bm}}
 \prod_{i=1}^{D-1}\prod_{I\neq J}^{N_i}
 \frac{\prod_{l\leq m_I^{(i)}-m_J^{(i)}}(\lambda_I-\lambda_{J}+l)}
 {\prod_{l\leq 0}(\lambda_I-\lambda_J+l)}\prod_{I=1}^{N_i}\prod_{J=1}^{N_{i+1}}
 \frac{\prod_{l\leq 0}(\lambda_I-\lambda_J+l)}
 {\prod_{l\leq m_I^{(i)}-m_J^{(i+1)}}(\lambda_I-\lambda_J+l)}q_i^{\abs{\vec m^{(i)}}}\,.
 \end{equation}
 For each fixed degree $(\vec m^{(1)},\ldots, \vec m^{D-1})\in \op{Eff}_{bm}^{T}$, in the spirit of the Figure \ref{diag:ideaofproof} in the introduction, we split $i_{Q_0}^*I^{F,S}_{(\vec m^{(i)})}$ into two parts: 
 \begin{align}
 i_{Q_0}^*I^{F,S}_{(\vec m^{(i)})}=R^{(k)}I^{(k)}\,,
 \end{align}
 where 
 \begin{align}\label{eqn:Inodek}
    I^{(k)}=&
 \prod_{I\neq J}^{N_k}
 \frac{\prod_{l\leq m_I^{(k)}-m_J^{(k)}}(\lambda_I-\lambda_{J}+l)}
 {\prod_{l\leq 0}(\lambda_I-\lambda_J+l)}
 \prod_{I=1}^{N_k}\prod_{F=1}^{N_{k+1}}
 \frac{\prod_{l\leq 0}(\lambda_I-\lambda_F+l)}
 {\prod_{l\leq m_I^{(k)}-m_F^{(k+1)}}(\lambda_I-\lambda_F+l)}\nonumber\\
 & \prod_{A=1}^{N_{k-1}}\prod_{J=1}^{N_{k}}
 \frac{\prod_{l\leq 0}(\lambda_A-\lambda_J+l)}
 {\prod_{l\leq m_A^{(k-1)}-m_J^{(k)}}(\lambda_A-\lambda_J+l)}\,,
 \end{align}
 and $R^{{(k)}}$ is the remaining part.
In the expression of $ I^{(k)}$,
$m^{(k)}_{I}\geq m^{(k+1)}_{I}$ for $I=1,\ldots, N_k$, otherwise $I^{(k)}$  would vanish. 
Let $n^{(k)}_I=m^{(k)}_{I}- m^{(k+1)}_{I}$. 
Making a substitution $m^{(k)}_I=n^{(k)}_I+m^{(k+1)}_I$, and doing some combinatorics, 
we can rewrite $I^{(k)}$ as follows,

\begin{subequations}
\begin{align}
     I^{(k)}=&
 \prod_{I\neq J}^{N_k}
 \frac{\prod_{l\leq n^{(k)}_I-n^{(k)}_J}(\lambda_I-\lambda_{J}+m^{(k+1)}_I-m^{(k+1)}_J+l)}
 {\prod_{l\leq 0}(\lambda_I-\lambda_J+m^{(k+1)}_I-m^{(k+1)}_J+l)}\label{eqn:flagI1}\\
 &\frac{\prod_{A=1}^{N_{k-1}}\prod_{I=1}^{N_k} \prod_{l=0}^{n_I^{(k)}-1}(\lambda_A-\lambda_I+m_A^{(k-1)}-m_I^{(k+1)}-l) }
 {\prod_{I=1}^{N_k}\prod_{F=1}^{N_{k+1}}\prod_{l=1}^{n_I^{(k)}}(\lambda_I-\lambda_F+m^{(k+1)}_{I}-m_F^{(k+1)}+l)}\label{eqn:flagI2}\\
 &\prod_{I=1}^{N_k}\prod_{F\in [N_{k+1}]\backslash [N_k]}
 \frac{\prod_{l\leq 0}(\lambda_I-\lambda_F+l)}{\prod_{l\leq m_I^{(k+1)}-m_F^{(k+1)}}(\lambda_I-\lambda_F+l)}
 \prod_{A=1}^{N_{k-1}}\prod_{I=1}^{N_k}
 \frac{\prod_{l\leq 0}(\lambda_A-\lambda_I+l)}{\prod_{l\leq m_A^{(k-1)}-m^{(k+1)}_I}(\lambda_A-\lambda_I+l)}\,.\label{eqn:flagI3}
\end{align}
\end{subequations}
Observe the above formula and one can find sub-equations \eqref{eqn:flagI1} and \eqref{eqn:flagI2} together can be viewed as the degree $\vec n^{(k)}$ term of the equivariant quasimap small $I$-function of $S_1^{\oplus N_{k-1}}\rightarrow Gr(N_k,N_{k+1})$ in \eqref{eqn:Ifungrbmres}, pulled back to the $S^2$-fixed point $Q=\{1,\ldots, N_k\}$, 
if one pretends $\lambda_{F}+m^{(k+1)}_F$ are equivariant parameters of the torus $(\C^*)^{N_{k+1}}$ and $\lambda_A+m^{(k-1)}_A$ are equivariant parameters of the torus $(\C^*)^{N_{k-1}}$. 

We do the similar combinatorics to the $I$-function of the variety $\mc Z^1$ in Lemma \ref{lem:IfunctionofZ1}. 
After being restricted to the torus fixed point $P_0=\iota(Q_0)$, $x_I^{(k)}\rvert_{P_0}=\lambda_{N_k+I}$, $x_J^{(i)}\rvert_{P_0}=\lambda_J$ for $i\neq k$, and  
\begin{align}
        i_{P_0}^*I^{\mc Z^1}(\vec q')=&\sum_{(\vec m^{(i)})\in \op{Eff}^T_{am}}\prod_{\substack{i=1,\\i\neq k-1,k}}^{D-1}\left(\prod_{\substack{I,J=1\\I\neq J}}^{N_i}
    \frac{\prod_{l\leq m_{I}^{(i)}-m_{J}^{(i)}}(\lambda_{I}-\lambda_{J}+l)}
    {\prod_{l\leq 0}(\lambda_{I}-\lambda_{J}+l)}
    \prod_{I=1}^{N_i}\prod_{J=1}^{N_{i+1}}
    \frac{\prod_{l\leq 0}(\lambda_{I}-\lambda_{J}+l)}
    {\prod_{l\leq m_{I}^{(i)}-m_{J}^{(i+1)}}(\lambda_{I}-\lambda_{J}+l)}\right)\nonumber\\
    &
    \prod_{\substack{I,J=1\\I\neq J}}^{N_{k-1}}
    \frac{\prod_{l\leq m_{I}^{(k-1)}-m_{J}^{(k-1)}}(\lambda_{I}-\lambda_{J}+l)}
    {\prod_{l\leq 0}(\lambda_{I}-\lambda_{J}+l)}
    \prod_{A=1}^{N_{k-1}}\prod_{F=1}^{N_{k+1}}
    \frac{\prod_{l\leq 0}(\lambda_A-\lambda_{F}+l)}
    {\prod_{l\leq m_{A}^{(k-1)}-m_{F}^{(k+1)}}(\lambda_A-\lambda_F+l)}\nonumber\\
    &
    \prod_{\substack{I,J=1\\I\neq J}}^{N_k'}
    \frac{\prod_{l\leq m_I^{(k)}-m_J^{(k)}}(\lambda_{N_K+I}^{(k)}-\lambda_{N_K+J}^{(k)}+l)}{\prod_{l\leq 0}(\lambda_{N_K+I}^{(k)}-\lambda_{N_K+J}^{(k)}+l)}
    \prod_{F=1}^{N_{k+1}}\prod_{I=1}^{N_{k}'}
    \frac{\prod_{l\leq 0}(\lambda_F-\lambda_{N_k+I}+l)}
    {\prod_{l\leq m_{F}^{(k+1)}-m_{I}^{(k)}}(\lambda_F-\lambda_{N_k+I}+l)}\nonumber\\
    &
    \prod_{I=1}^{N_k'}\prod_{A=1}^{N_{k-1}}
    \frac{\prod_{l\leq -m_{I}^{(k)}+m_{A}^{(k-1)}}(-\lambda_{N_k+I}+\lambda_A+l)}
    {\prod_{l\leq 0}(-\lambda_{N_k+I}+\lambda_A+l)}\prod_{i=1}^{D-1}(q_{i}')^{\abs{\vec m^{(i)}}}\,.
\end{align}
For each fixed degree $(\vec m^{(1)},\ldots,\vec m^{(D-1)})\in \op{Eff}_{am}^T$, 
we split $i_{P_0}^*I^{\mc Z^1}_{(\vec m^{(i)})}$ into 
\begin{equation}
    i_{P_0}^*I^{\mc Z^1}_{(\vec m^{(i)})}=R_{am}^{(k)}I^{(k)}_{am}\,,
\end{equation}
where 

\begin{align}\label{eqn:Inodekdual}
    I^{(k)}_{am}=&
    \prod_{A=1}^{N_{k-1}}\prod_{F=1}^{N_{k+1}}
    \frac{\prod_{l\leq 0}(\lambda_A-\lambda_{F}+l)}
    {\prod_{l\leq m_{A}^{(k-1)}-m_{F}^{(k+1)}}(\lambda_A-\lambda_F+l)}
        \prod_{I=1}^{N_k'}\prod_{A=1}^{N_{k-1}}
    \frac{\prod_{l\leq -m_{I}^{(k)}+m_{A}^{(k-1)}}(-\lambda_{N_k+I}+\lambda_A+l)}
    {\prod_{l\leq 0}(-\lambda_{N_k+I}+\lambda_A+l)}\nonumber\\
    &
    \prod_{\substack{I,J=1\\I\neq J}}^{N_k'}
    \frac{\prod_{l\leq m_I^{(k)}-m_J^{(k)}}(\lambda_{N_K+I}^{(k)}-\lambda_{N_K+J}^{(k)}+l)}{\prod_{l\leq 0}(\lambda_{N_K+I}^{(k)}-\lambda_{N_K+J}^{(k)}+l)}
    \prod_{F=1}^{N_{k+1}}\prod_{I=1}^{N_{k}'}
    \frac{\prod_{l\leq 0}(\lambda_F-\lambda_{N_k+I}+l)}
    {\prod_{l\leq m_{F}^{(k+1)}-m_{I}^{(k)}}(\lambda_F-\lambda_{N_k+I}+l)}\,.
\end{align}
Denote $n_I^{(k)}:=m_{N_k+I}^{(k+1)}-m_I^{(k)}$. 
We are aware that $n_I^{(k)}\geq 0$, otherwise $ I^{(k)}_{am}$ would vanish. 
By making substitution $m_{I}^{(k)}=m_{N_k+I}^{(k+1)}-n_I^{(k)}$, we can transform $ I^{(k)}_{am}$ to
\begin{subequations}
\begin{align}
     I^{(k)}_{am}=&
     \prod_{\substack{I,J=1\\I\neq J}}^{N_k'}
    \frac{\prod_{l\leq n_J^{(k)}-n_I^{(k)}}(\lambda_{N_K+I}^{(k)}-\lambda_{N_K+J}^{(k)}+m^{(k+1)}_{N_k+I}-m^{(k+1)}_{N_k+J}+l)}{\prod_{l\leq 0}(\lambda_{N_K+I}^{(k)}-\lambda_{N_K+J}^{(k)}+m^{(k+1)}_{N_k+I}-m^{(k+1)}_{N_k+J}+l)}\label{eqn:flagdual1}\\
    &
    \prod_{I=1}^{N_k'}\frac{\prod_{A=1}^{N_{k-1}} \prod_{l=1}^{n^{(k)}_I}(-\lambda_{N_k+I}-m_{N_k+I}^{(k+1)}+\lambda_A+m_{A}^{(k-1)}+l) }
    {\prod_{F=1}^{N_{k+1}}\prod_{l=1}^{n_I^{(k)}}(\lambda_F-\lambda_{N_k+I}+m_{F}^{(k+1)}-m_{N_k+I}^{(k+1)}+l)}\label{eqn:flagdual2}\\
    &
    \prod_{I=1}^{N_k}\prod_{F\in [N_{k+1}]\backslash [N_k]}
 \frac{\prod_{l\leq 0}(\lambda_I-\lambda_F+l)}{\prod_{l\leq m_I^{(k+1)}-m_F^{(k+1)}}(\lambda_I-\lambda_F+l)}
 \prod_{A=1}^{N_{k-1}}\prod_{I=1}^{N_k}
 \frac{\prod_{l\leq 0}(\lambda_A-\lambda_I+l)}{\prod_{l\leq m_A^{(k-1)}-m^{(k+1)}_I}(\lambda_A-\lambda_I+l)}\,.\label{eqn:flagdual3}
\end{align}
\end{subequations}
Sub-equations \eqref{eqn:flagdual1} and \eqref{eqn:flagdual2} together can be viewed as the degree $\vec n^{(k)}$ term of the $I$-function of $(S_1^{\vee})^{\oplus N_{k-1}}\rightarrow Gr(N_k',N_{k+1})$ in \eqref{eqn:Ifungramres} being restricted to the $S^2$-fixed point $(\{N_k+1<\ldots< N_{k+1}\}\subseteq [N_{k+1}])$, 
if we pretend that $\lambda_F+m^{(k+1)}_{F}$ are the equivariant parameters of the torus $(\C^*)^{N_{k+1}}$, 
and $\lambda_A+m^{(k-1)}_A$ are the equivariant parameters of the torus $(\C^*)^{N_{k-1}}$. 

Compare the two formulae $I^{(k)}$ and $I^{(k)}_{am}$ for fixed integer vectors $\vec m^{(i)}$, $i\neq k$. 
%Note thatthe vector $\vec m^{(k)}\in \Z^{N_1}$ to make $(\vec m^{(1)},\ldots,\vec m^{(k)},\ldots, \vec m^{(D-1)})\in \op{Eff}_{bm}^{T}$  and the vector $\vec m^{(k)}\in \Z^{N_1'}$ to make $(\vec m^{(1)},\ldots, \vec m^{(k)}, \ldots ,\vec m^{(D-1)})\in \op{Eff}^T_{am}$ are different, so we denote the previous one by $\vec m^{(k)}$ and the later one by $\vec{\tilde{m}}^{(k)}$.
Denote 
\begin{align}
    I^{(k)}(\vec q):= \sum_{\substack {\vec m^{(k)}:\\ m^{(k)}_I\geq m^{(k+1)}_I}}I^{(k)}q_{k}^{\abs{\vec m^{(k)}}}q_{k+1}^{\abs{\vec m^{(k+1)}}}q_{k-1}^{\abs{\vec m^{(k-1)}}}\,,
\end{align}
and denote 
\begin{equation}
    I^{(k)}_{am}(\vec q'):=\sum_{\substack{\vec {m}^{(k)}:\\m_{N_k+I}^{(k+1)}\geq m_I^{(k)} }}I^{(k)}_{am}(q_{k}')^{\abs{\vec { m}^{(k)}}}(q'_{k+1})^{\abs{\vec m^{(k+1)}}}(q'_{k-1})^{\abs{\vec m^{(k-1)}}}\,.
\end{equation} 
Notice that we have involved variables $q_k,\,q_{k-1},\,q_{k+1}$, and $q_k',\,q_{k-1}',\,q_{k+1}'$  in the summations in order to investigate the change of variables under the cluster transformation.
\begin{lem}\label{lem:mainlemmaflag}
For fixed integer vectors $\vec m^{(i)}$, $i\neq k$, $I^{(k)}(\vec q)$ and   $I^{(k)}_{am}(\vec q')$
 satisfy the following relations.
\begin{itemize} 
    \item When $N_{k+1}\geq N_{k-1}+2$, 
    \begin{equation}\label{eqn:lemflag1}
    I^{(k)}(\vec q)= I^{(k)}_{am}(\vec q')\,,
\end{equation}
via the variable change,
\begin{equation}
    q_{k}'=q_k^{-1}, q_{k+1}'=q_{k+1}q_k\,.
\end{equation}
\item When $N_{k+1}=N_{k-1}+1$,
\begin{equation}\label{eqn:lemflag2}
   I^{(k)}(\vec q) = 
    e^{(-1)^{N_k'} q_k} I^{(k)}_{am}(\vec q')\,,
\end{equation}
via the variable change,
\begin{equation}
    q_{k}'=q_k^{-1}, q_{k+1}'=q_{k+1}q_k\,.
\end{equation}
\item When $N_{k+1}=N_{k-1}$, then $N_k=N_{k+1}=N_{k-1}$ and $N_k'=0$.
 \begin{equation}\label{eqn:N2=N04}
 I^{(k)}(\vec q)= 
I^{(k)}_{am}(\vec q')\,,
    \end{equation}
    via the variable change, 
    \begin{equation}\label{eqn:variablechangeN2=N0k}
        q_{k+1}'=\frac{q_kq_{k+1}}{1+q_k},\, q_{k-1}'={q_{k-1}}(1+q_k)\,.
    \end{equation}
\end{itemize}
\end{lem}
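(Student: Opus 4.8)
The plan is to match $I^{(k)}(q)$ and $I^{(k)}_a(q')$ with the fixed-point restrictions of the tautological-bundle-over-Grassmannian building block of Section~\ref{sec:taugr}, and then to quote Theorem~\ref{thm:Haidong}.

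Fix all classes $\vec m^{(i)}$ with $i\neq k$. On the $k$-th component the constraints cutting out $Eff_{flag}$ force $m^{(k)}_I\geq m^{(k+1)}_I$, so after the substitution $n^{(k)}_I=m^{(k)}_I-m^{(k+1)}_I$ the class $\vec n^{(k)}$ ranges over $\Z^{N_k}_{\geq0}$, which is exactly the effective semigroup of $S_1^{\oplus N_{k-1}}\to Gr(N_k,N_{k+1})$. Comparing \eqref{eqn:flagI1}--\eqref{eqn:flagI3} with \eqref{eqn:Ifungrbmres}, the subexpressions \eqref{eqn:flagI1} and \eqref{eqn:flagI2} are precisely the degree-$\vec n^{(k)}$ summand of $I^{bm}_{gr}(q_k)\rvert_{\vec C}$ for that block under the dictionary $\lambda_F\mapsto \lambda_F+m^{(k+1)}_F$ (the $(\C^*)^{N_{k+1}}$-weights), $\eta_A\mapsto \lambda_A+m^{(k-1)}_A$ (the $(\C^*)^{N_{k-1}}$-weights on the fibre) and $\vec C=[N_k]$, multiplied by the $\vec n^{(k)}$-independent monomial $q_k^{\sum_{I=1}^{N_k}m^{(k+1)}_I}q_{k-1}^{\abs{\vec m^{(k-1)}}}q_{k+1}^{\abs{\vec m^{(k+1)}}}$; the factor \eqref{eqn:flagI3} is also $\vec n^{(k)}$-independent. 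Hence $I^{(k)}(q)$ equals this common factor \eqref{eqn:flagI3}, times that monomial, times the shifted $I^{bm}_{gr}(q_k)\rvert_{\vec C}$. Running the same reduction on \eqref{eqn:flagdual1}--\eqref{eqn:flagdual3}, now with $n^{(k)}_I=m^{(k+1)}_{N_k+I}-m^{(k)}_I$ and the fixed point $\vec C^c$ of Lemma~\ref{lem:fixedpointsflag}, identifies \eqref{eqn:flagdual1} and \eqref{eqn:flagdual2} with the shifted $I^{am}_{gr}(q_k')\rvert_{\vec C^c}$ for $(S_1^\vee)^{\oplus N_{k-1}}\to Gr(N_k',N_{k+1})$ (cf.\ \eqref{eqn:Ifungramres}), shows that the leftover factor \eqref{eqn:flagdual3} is \eqref{eqn:flagI3} verbatim, and presents $I^{(k)}_a(q')$ as that same common factor, times $(q_k')^{\sum_{I=1}^{N_k'}m^{(k+1)}_{N_k+I}}(q_{k-1}')^{\abs{\vec m^{(k-1)}}}(q_{k+1}')^{\abs{\vec m^{(k+1)}}}$, times the shifted $I^{am}_{gr}(q_k')\rvert_{\vec C^c}$.

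Next I would invoke Theorem~\ref{thm:Haidong} for this block with $(N_1,N_2,N_0,N_1')$ replaced by $(N_k,N_{k+1},N_{k-1},N_k')$ and the shifted parameters; this is permissible because that theorem is an identity of rational functions in the $\lambda$'s, $\eta$'s and $q$, hence survives the substitutions $\lambda_F\mapsto\lambda_F+m^{(k+1)}_F$ and $\eta_A\mapsto\lambda_A+m^{(k-1)}_A$. Its three cases are exactly $N_{k+1}\geq N_{k-1}+2$, $N_{k+1}=N_{k-1}+1$ (producing the factor $e^{(-1)^{N_k'}q_k}$) and $N_{k+1}=N_{k-1}$; in the last one the flag inequalities $N_{k-1}\le N_k\le N_{k+1}$ force $N_k=N_{k+1}=N_{k-1}$ and $N_k'=0$, so both Grassmannians become points, $I^{am}_{gr}\equiv1$, and the correction $(1+q_k)^{\sum_A\eta_A-\sum_F\lambda_F+N_k'}$ becomes $(1+q_k)^{\abs{\vec m^{(k-1)}}-\abs{\vec m^{(k+1)}}}$ after the shift and restriction, using $N_{k-1}=N_{k+1}$. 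It then remains to match the surviving monomials: with $\abs{\vec m^{(k+1)}}=\sum_{I=1}^{N_k}m^{(k+1)}_I+\sum_{I=1}^{N_k'}m^{(k+1)}_{N_k+I}$ one checks that $q_k'=q_k^{-1}$, $q_{k+1}'=q_{k+1}q_k$, $q_{k-1}'=q_{k-1}$ turns the mutated monomial into the original one in the first two cases, while in the third case the extra $(1+q_k)$-power is absorbed by $q_{k-1}'=q_{k-1}(1+q_k)$, $q_{k+1}'=q_kq_{k+1}/(1+q_k)$. Assembling the three factorizations gives the claimed identities between $I^{(k)}(q)$ and $I^{(k)}_a(q')$.

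I expect the main obstacle to be the splitting on the mutated side: checking that \eqref{eqn:Iflagam} restricted to $Q_0^c$ really does factor as the same \eqref{eqn:flagI3} times the dual-Grassmannian $I$-function with the stated dictionary. This amounts to tracking how the ``$\prod_{l\leq0}$''-type ratios in \eqref{eqn:flagdual1}--\eqref{eqn:flagdual3} redistribute between the building-block part and the leftover part, verifying that the relabeling $\vec C_{N_k}\mapsto \vec C_{N_{k+1}}\setminus\vec C_{N_k}$ built into $\iota$ is compatible with the shift $\lambda_F\mapsto\lambda_F+m^{(k+1)}_F$, and pinning down the overall signs. Once that bookkeeping is in hand, the conclusion is a direct application of Theorem~\ref{thm:Haidong} together with the elementary monomial identity above.
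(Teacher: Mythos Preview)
Your proposal is correct and follows essentially the same approach as the paper: identify \eqref{eqn:flagI1}--\eqref{eqn:flagI2} and \eqref{eqn:flagdual1}--\eqref{eqn:flagdual2} with the fixed-point restrictions of the Grassmannian building block under the shifted-parameter dictionary, observe that the leftover factors \eqref{eqn:flagI3} and \eqref{eqn:flagdual3} coincide, and apply Theorem~\ref{thm:Haidong}. You spell out the monomial bookkeeping and the $N_{k+1}=N_{k-1}$ exponent computation more explicitly than the paper does, but the strategy is the same.
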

\begin{proof}
This lemma is a straightforward application of Theorem \ref{thm:Haidong}.
Since the sub-equations \eqref{eqn:flagI3} and \eqref{eqn:flagdual3} are exactly equal no matter what $\vec m^{(k)}$ is, we only have to consider the sum of subequations  \eqref{eqn:flagI1} and \eqref{eqn:flagI2} and the sum of subequations \eqref{eqn:flagdual1} and \eqref{eqn:flagdual2}. One can find that
\begin{align}
I^{(k)}(\vec q)&=
    \sum_{\vec n^{(k)}\in \Z_{\geq 0}^{N_k}}\left(i^*_{Q_0}I^{Gr, S^2}_{\vec n^{(k)}}\right)q_{k}^{\abs{\vec n^{(k)}}+\sum_{I=1}^{N_k}m^{(k+1)}_{I}}q_{k+1}^{\abs{\vec m^{(k+1)}}}q_{k-1}^{\abs{\vec m^{(k-1)}}}\,\nonumber\\
    &\times \prod_{I=1}^{N_k}\prod_{F\in [N_{k+1}]\backslash [N_k]}
 \frac{\prod_{l\leq 0}(\lambda_I-\lambda_F+l)}{\prod_{l\leq m_I^{(k+1)}-m_F^{(k+1)}}(\lambda_I-\lambda_F+l)}
 \prod_{A=1}^{N_{k-1}}\prod_{I=1}^{N_k}
 \frac{\prod_{l\leq 0}(\lambda_A-\lambda_I+l)}{\prod_{l\leq m_A^{(k-1)}-m^{(k+1)}_I}(\lambda_A-\lambda_I+l)}\,,
\end{align}
where $I^{Gr, S^2}_{\vec n^{(k)}}$ is the degree $\vec n^{(k)}$-term of the $I$-function of $S_1^{\oplus N_{k-1}}\rightarrow Gr(N_k,N_{k+1})$,
and $Q_0=([N_{k}]\subseteq [N_{k+1}])$ is the $S^2$-fixed point. Similarly, 
\begin{align}\label{eqn:N2=N03}
    I^{(k)}_{am}(\vec q')&=  \sum_{\vec n^{(k)}\in \Z_{\geq 0}^{N_k'}}\left(i^*_{P_0}I^{Gr^\vee, S^2}_{\vec n^{(k)}}\right)(q_{k}')^{-\abs{\vec n^{(k)}}+\sum_{I=1}^{N_k'}m_{N_k+I}^{(k+1)}}(q'_{k+1})^{\abs{\vec m^{(k+1)}}}(q'_{k-1})^{\abs{\vec m^{(k-1)}}}\,\nonumber\\
    &\prod_{I=1}^{N_k}\prod_{F\in [N_{k+1}]\backslash [N_k]}
    \frac{\prod_{l\leq 0}(\lambda_I-\lambda_F+l)}{\prod_{l\leq m_I^{(k+1)}-m_F^{(k+1)}}(\lambda_I-\lambda_F+l)}
    \prod_{A=1}^{N_{k-1}}\prod_{I=1}^{N_k}
    \frac{\prod_{l\leq 0}(\lambda_A-\lambda_I+l)}{\prod_{l\leq m_A^{(k-1)}-m^{(k+1)}_I}(\lambda_A-\lambda_I+l)}\,,
\end{align}
where $I^{Gr^\vee,S^2}_{\vec n^{(k)}}$ is the equivariant quasimap small $I$-function of $(S_1^{\vee})^{\oplus N_{k-1}}\rightarrow Gr(N_k',N_{k+1})$ and  $P_0=\iota(Q_0)$.
In the above two expressions, the equivariant parameters of $S^2$ are $\lambda_F+m^{(k+1)}_F$ and $\eta_A+m^{(k-1)}_A$.
Therefore we can derive the first two relations \eqref{eqn:lemflag1}\eqref{eqn:lemflag2} for $N_2\geq N_0+1$ by Theorem \ref{thm:Haidong}. 
As to the case $N_{k-1}=N_{k+1}$, by Theorem \ref{thm:Haidong}, we get,
\begin{align}\label{eqn:N2=N02}
&I^{(k)}(\vec q) =\sum_{\vec n^{(k)}\in Z_{\geq 0}^{N_1'}}I^{Gr^\vee,S^2}_{\vec n^{(k)}}q_{k}^{\abs{\vec n^{(k)}}+\sum_{I=1}^{N_k}m^{(k+1)}_{I}}(\frac{q_{k+1}}{1+q_k})^{\abs{\vec m^{(k+1)}}}((1+q_k)q_{k-1})^{\abs{\vec m^{(k-1)}}}\nonumber\\
&\times \prod_{I=1}^{N_k}\prod_{F\in [N_{k+1}]\backslash [N_k]}
    \frac{\prod_{l\leq 0}(\lambda_I-\lambda_F+l)}{\prod_{l\leq m_I^{(k+1)}-m_F^{(k+1)}}(\lambda_I-\lambda_F+l)}
    \prod_{A=1}^{N_{k-1}}\prod_{I=1}^{N_k}
    \frac{\prod_{l\leq 0}(\lambda_A-\lambda_I+l)}{\prod_{l\leq m_A^{(k-1)}-m^{(k+1)}_I}(\lambda_A-\lambda_I+l)}\,.
\end{align}
Then we can obtain the relation \eqref{eqn:N2=N04} via the variable change \eqref{eqn:variablechangeN2=N0k}.
\end{proof}
For fixed vectors $\vec m^{(i)},\, i\neq k$, it's clear that $R^{(k)}=R_{am}^{(k)}$. Also the variables $q_i,\,i\neq k-1,\,k,\,k+1$ are not affected at all. 
Therefore $i_{Q_0}^* I^{F,S}(\vec q)=\sum_{(\vec m^{(i)})\in \op{Eff}^T_{bm}}R^{(k)}I^{(k)}\prod_{i=1}^{D-1}q_i^{\abs{\vec m^{(i)}}}$ and $i_{P_0}^*I^{\mc Z^1,S}(\vec q')=\sum_{(\vec m^{(i)})\in \op{Eff}^T_{am}}R^{(k)}_{am}I^{(k)}_{am}\prod_{i=1}^{D-1}(q_i')^{\abs{\vec m^{(i)}}} $ satisfy all relations in Lemma \ref{lem:mainlemmaflag}. 
The above procedure can be generalized to any pair of $S$-fixed points $Q\in \mathfrak{F}^{bm}, \, P=\iota(Q)$ easily, so we have proved Theorem \ref{thm:main} for $I^{F,S}(\vec q)$ and $I^{\mc Z^1,S}(\vec q')$.

To prove the relation between $I^{F,S}$ and $I^{\mc X^2,S}$ in Theorem \ref{thm:main}, 
we can replace $N_{k-1}$ in the above arguments by 0 naively and repeat the above procedure.

\subsection{Proofs for Theorem \ref{thm:main2}}
We use a similar method in the previous sub-section. 
For a pair of torus fixed points $Q\in \mathfrak{F}^{bm}$ and $P=\iota(Q)\in \mathfrak{F}^{am}$, we are going to study the relations of the restriction $i^*_QI^{tF, S^2}(\vec q)$, $i^*_PI^{\mc Z^3, S^2}(\vec q')$ and  $i^*_PI^{\mc Z^4, S^2}(\vec q')$. 

Note that $S_1^{\oplus N_0}\rightarrow F(N_0,\ldots, N_D)$ and $\mc Z^3$ are the local targets over $F(N_1,\ldots, N_D)$ and $\mc Z^1$. Comparing $I^{F,S}_{(\vec m^{(i)})}(\vec q)$ and
$I^{tF, S^2}_{(\vec m^{(i)})}(\vec q)$ in Lemma \ref{lem:Ifuncoftautflag} and Lemma \ref{lem:Itauflagam}, we can find that
they  differ by a twisted factor.
Also $I^{\mc Z^1,S}_{(\vec m^{(i)})}(\vec q')$ and $I^{\mc Z^3, S^2}_{(\vec m^{(i)})}(\vec q')$ differ by the same twisted factor. The proof in the previous subsection for Theorem \ref{thm:main} can be performed to $S_1^{\oplus N_0}\rightarrow F(N_0,\ldots, N_D)$ and $\mc Z^3$  directly without any obstacle. We omit this proof.  

We only prove the relations between $I^{tF, S^2}$ and $I^{\mc Z^4,S^2}$ in Theorem \ref{thm:main2} in detail. 
We prefer to work on a shorter quiver to make the proof easier, and it has no obstacle to generalizing it. 

Consider the quiver diagram below with $ N_1\leq N_2\leq N_3$ and $N_0\leq N_2$, whose quiver variety is $S_1^{\oplus N_0}\rightarrow F(N_1,N_2,N_3)$,
\begin{figure}[H]
    \centering
\includegraphics[width=2.5in]{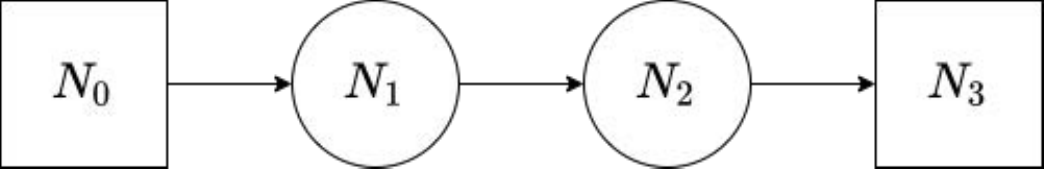}
\end{figure}
\noindent 
By Example \ref{ex:dualoftautflag}, performing quiver mutation at gauge node $k=1$, we get $\mc Z^4$ which is a subvariety in $\mc X^4$ defined by the equation $BA_1=0$. The quiver variety $\mc X^4$ is defined by the quiver diagram below, 
\begin{figure}[H]
    \centering
\includegraphics[width=2.5in]{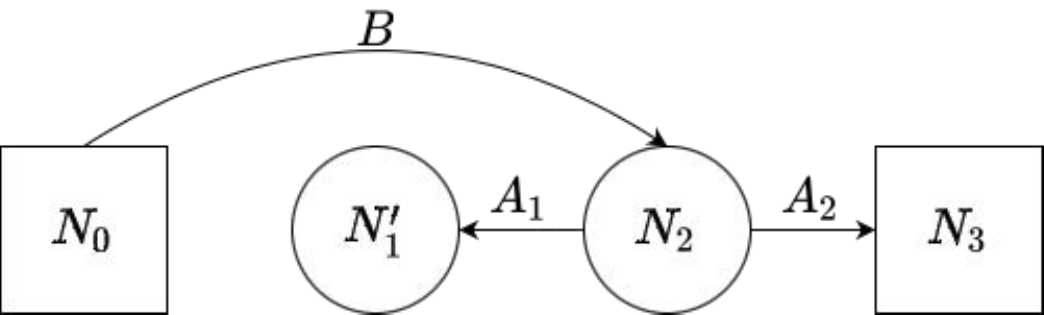}
\end{figure}
Without loss of generality, we consider a special $S^2$-fixed point in $\mathfrak F^{bm}$
\begin{equation}
    Q_{0}=([N_1]\subseteq [N_2]\subseteq [N_3])\,
\end{equation}
and the restriction of ${I}^{tF,S^2}(q_1, q_2)$ in Lemma \ref{lem:Ifuncoftautflag} to it. 
 Being restricted to $Q_0$, $x^{(i)}_{I}\rvert_{Q_0}=\lambda_I$ and 
 \begin{align}
    i^*_{Q_0}{I}^{tF,S^2}(q_1, q_2)&=\sum_{\substack{(\vec p,\vec m)\\ \in \op{Eff}^T_{bm} }}
   \prod_{I\neq J}^{N_2}\frac{\prod_{l\leq m_I-m_J}(\lambda_{I}-\lambda_{J}+l)}{\prod_{l\leq 0}(\lambda_{I}-\lambda_{J}+l)}\prod_{I=1}^{N_2}\prod_{J=1}^{N_3}\frac{1}{\prod_{l=1}^{m_I}(\lambda_{I}-\lambda_J+l)} q_2^{\abs{\vec m}}\nonumber\\
    &
    \prod_{I\neq J}^{N_1}\frac{\prod_{l\leq p_I-p_J}(\lambda_{I}-\lambda_{J}+l)}{\prod_{l\leq 0}(\lambda_{I}-\lambda_{J}+l)}
    \prod_{I=1}^{N_1}\prod_{F=1}^{N_2}
    \frac{\prod_{l\leq 0}(\lambda_I-\lambda_F+l)}{\prod_{l\leq p_I-m_F}(\lambda_I-\lambda_F+l)}\nonumber\\
    &\prod_{A=1}^{N_0}\prod_{I=1}^{N_1}\frac{\prod_{l\leq 0}(\eta_A-\lambda_I+l)}{\prod_{l\leq -
    p_I}(\eta_A-\lambda_I+l)}q_1^{\abs{\vec p}}\,.
\end{align} 
Fixing a degree $(\vec p, \vec m)\in \op{Eff}_{bm}^T$,
 we can split $i^*_{Q_0}I^{tF,S^2}_{(\vec p,\vec m)}$ into two parts
\begin{equation}
    i^*_{Q_0}I^{tF,S^2}_{(\vec p,\vec m)}=R^{(1)}
    I^{(1)}\,,
\end{equation}
where 
\begin{align}
I^{(1)}&=
       \prod_{I\neq J}^{N_1}\frac{\prod_{l\leq p_I-p_J}(\lambda_{I}-\lambda_{J}+l)}{\prod_{l\leq 0}(\lambda_{I}-\lambda_{J}+l)}
    \prod_{I=1}^{N_1}\prod_{F=1}^{N_2}
    \frac{\prod_{l\leq 0}(\lambda_I-\lambda_F+l)}{\prod_{l\leq p_I-m_F}(\lambda_I-\lambda_F+l)}\nonumber\\
    &\prod_{A=1}^{N_0}\prod_{I=1}^{N_1}\frac{\prod_{l\leq 0}(\eta_A-\lambda_I+l)}{\prod_{l\leq -
    p_I}(\eta_A-\lambda_I+l)}\,,
\end{align}
and 
\begin{equation}
R^{(1)}=
    \prod_{I\neq J}^{N_2}\frac{\prod_{l\leq m_I-m_J}(\lambda_{I}-\lambda_{J}+l)}{\prod_{l\leq 0}(\lambda_{I}-\lambda_{J}+l)}\prod_{I=1}^{N_2}\prod_{J=1}^{N_3}\frac{1}{\prod_{l=1}^{m_I}(\lambda_{I}-\lambda_J+l)}\,.
\end{equation}
In the expression of $I^{(1)}$, we have $n_I:=p_I-m_{I}\geq 0$, otherwise $I^{(1)}=0$. Replacing $p_I$ by $n_I+m_I$, we have 
\begin{subequations}
 \begin{align}
    {I}^{(1)}&=
   \prod_{I\neq J}^{N_1}\frac{\prod_{l\leq n_I-n_J}\lambda_{I}-\lambda_{J}+m_I-m_J+l}{\prod_{l\leq 0}\lambda_{I}-\lambda_{J}+m_I-m_J+l}
    \prod_{I=1}^{N_1}\frac{\prod_{A=1}^{N_0}\prod_{l=0}^{n_I-1}(\eta_A-\lambda_I-m_I-l))}{\prod_{F=1}^{N_2}\prod_{l=1}^{n_I}(\lambda_I-\lambda_F+m_I-m_F+l)}\label{eqn:Itaut1}\\
    &\prod_{I=1}^{N_1}\prod_{J=1}^{N_1'}\frac{\prod_{l\leq 0}(\lambda_I-\lambda_{N_1+J}+l)}{\prod_{l\leq m_I-m_{N_1+J}}(\lambda_I-\lambda_{N_1+J}+l)}\prod_{A=1}^{N_0}\prod_{I=1}^{N_1}\frac{\prod_{l\leq 0}(\eta_A-\lambda_I+l)}{\prod_{l\leq -m_I}(\eta_A-\lambda_I+l)}q_{1}^{\abs{\vec{n}}}\label{eqn:Itaut2}\,.
\end{align} 
\end{subequations}
We can find that the sub-equation \eqref{eqn:Itaut1} is the degree $\vec{n}$ term of the $I$-function of $S_1^{\oplus N_0}\rightarrow Gr(N_1,N_2)$ if we pretend that $\lambda_I+m_I$ are equivariant parameters of the torus $(\C^*)^{N_2}$ and  $\eta_A$ are still equivariant parameters of the torus $(\C^*)^{N_0}$. 

On the other side, consider the restriction of $I^{\mc Z^4,S^2}$ in Lemma \ref{lem:IofZ4} to the torus fixed point $P_0:=\iota(Q_0)=\{[N_2]\backslash [N_1]\subseteq [N_2]\subseteq [N_3]\}$. 
Then $x^{(1)}_I\rvert_{P_0}=\lambda_{N_1+I}$, $x_J^{(2)}\rvert_{P_0}=\lambda_J$, and
\begin{align}
    i_{P_0}^*{I}^{\mc Z^4,S^2}(\vec q')=&
    \sum_{\substack{(\vec p,\vec m)\\ \in \op{Eff}^T_{am} }}
   \prod_{I\neq J}^{N_2}\frac{\prod_{l\leq m_I-m_J}(\lambda_{I}-\lambda_{J}+l)}{\prod_{l\leq 0}(\lambda_{I}-\lambda_{J}+l)}\prod_{I=1}^{N_2}\prod_{J=1}^{N_3}\frac{1}{\prod_{l=1}^{m_I}(\lambda_{I}-\lambda_J+l)} q_2'^{\abs{\vec m}}
    \nonumber\\
    &\prod_{I\neq J}^{N_1'}\frac{\prod_{l\leq p_I-p_J}(\lambda_{N_1+I}-\lambda_{N_1+J}+l)}{\prod_{l\leq 0}(\lambda_{N_1+I}-\lambda_{N_1+J}+l)}\prod_{I=1}^{N_1'}\prod_{F=1}^{N_2}
    \frac{\prod_{l\leq 0}(-\lambda_{N_1+I}+\lambda_F+l)}{\prod_{l\leq -p_I+m_F}(-\lambda_{N_1+I}+\lambda_F+l)}\nonumber\\
    & \prod_{A=1}^{N_0}\prod_{I=1}^{N_1'}\frac{\prod_{l\leq -p_I}(\eta_A-
    \lambda_{N_1+I}+l)}{\prod_{l\leq 0}(\eta_A-\lambda_{N_1+I}+l)}
    \prod_{A=1}^{N_0}\prod_{F=1}^{N_2}\frac{\prod_{l\leq 0}(\eta_A-\lambda_F+l)}{\prod_{l\leq -m_F}(\eta_A-\lambda_F+l)} q_1'^{\abs{\vec p}}\,.
\end{align}
Fix a degree $(\vec p, \vec m)\in \op{Eff}_{am}^T$. 
The term  $i_{P_0}^*I^{\mc Z^4}_{(\vec p,\vec m)}$  can be split into two parts,
\begin{equation}
    i^*_{P_0}I^{\mc Z^4,S^2}_{(\vec p,\vec m)}=R_{am}^{(1)}I^{(1)}_{am}\,,
\end{equation}
where 
\begin{align}
    I^{(1)}_{am}=
    &\prod_{I\neq J}^{N_1'}\frac{\prod_{l\leq p_I-p_J}(\lambda_{N_1+I}-\lambda_{N_1+J}+l)}{\prod_{l\leq 0}(\lambda_{N_1+I}-\lambda_{N_1+J}+l)}\prod_{I=1}^{N_1'}\prod_{F=1}^{N_2}
    \frac{\prod_{l\leq 0}(-\lambda_{N_1+I}+\lambda_F+l)}{\prod_{l\leq -p_I+m_F}(-\lambda_{N_1+I}+\lambda_F+l)}\nonumber\\
    & \prod_{A=1}^{N_0}\prod_{I=1}^{N_1'}\frac{\prod_{l\leq -p_I}(\eta_A-
    \lambda_{N_1+I}+l)}{\prod_{l\leq 0}(\eta_A-\lambda_{N_1+I}+l)}
    \prod_{A=1}^{N_0}\prod_{F=1}^{N_2}\frac{\prod_{l\leq 0}(\eta_A-\lambda_F+l)}{\prod_{l\leq -m_F}(\eta_A-\lambda_F+l)}\,,
\end{align}
and $R^{(1)}_{am}$ is the remaining part. 
 Notice that in the expression of $I^{(1)}_{am}$,  $n_I=-p_I+m_{N_1+I}\geq 0$. Making replacement $p_I=m_{N_1+I}-n_I$, we can transform $I^{(1)}_{am}$ to
\begin{subequations}
\begin{align}
   I^{(1)}_{am}=&
   \prod_{I\neq J}^{N_1'}\frac{\prod_{l\leq -n_I+n_J}\lambda_{N_1+I}-\lambda_{N_1+J}+m_{N_1+I}-m_{N_1+J}+l}{\prod_{l\leq 0}\lambda_{N_1+I}-\lambda_{N_1+J}+m_{N_1+I}-m_{N_1+J}+l}\label{Itautflag:adual0} \\
    &\prod_{I=1}^{N_1'}
    \frac{\prod_{A=1}^{N_0}\prod_{l=1}^{ n_I}(\eta_A-\lambda_{N_1+I}-m_{N_1+I}+l)}
    {\prod_{F=1}^{N_2}\prod_{l=1}^{ n_I}(-\lambda_{N_1+I}-m_{N_1+I}+\lambda_F+m_F+l)} \label{Itautflag:adual1}  \\
    &
    \prod_{I=1}^{N_1}\prod_{J=1}^{N_1'}
    \frac{\prod_{l\leq 0}(\lambda_I-\lambda_{N_1+J}+l)}{\prod_{l\leq m_I-m_{{N_1+J}}}(\lambda_I-\lambda_{N_1+J}+l)}
    \prod_{A=1}^{N_0}\prod_{J=1}^{N_1}\frac{\prod_{l\leq 0}(\eta_A-\lambda_J+l)}{\prod_{l\leq -m_J}(\eta_A-\lambda_J+l)}
\label{Itautflag:adual2}\,.
\end{align}
\end{subequations}
 One can find that the sub-equations \eqref{Itautflag:adual0} and \eqref{Itautflag:adual1} together can be viewed as the degree ${\vec n}$ term of the ${I}$-function of $(S_1^\vee)^{\oplus N_0}\rightarrow Gr(N_1',N_2)$, if we view $\lambda_F+m_F$ as the equivariant parameters of the torus $(\C^*)^{N_2}$. 
 
 Let $\vec m$ fixed. Consider the sum of $I^{(1)}$ over all possible $\vec p\in \Z^{N_1}$, such that $(\vec p, \vec m)\in \op{Eff}_{bm}^T$, and denote it by 
 \begin{equation}
     I^{(1)}(\vec q)=\sum_{\vec p}I^{(1)}q_1^{\abs{\vec p}}q_2^{\abs{\vec m}}\,.
 \end{equation}
Similarly, consider the sum of $I^{(1)}_{am}$ over all possible $\vec{\tilde p}\in \Z^{N_1'}$ where $(\vec{\tilde p}, \vec m)\in \op{Eff}_{am}^T$, and denote it by 
\begin{equation}
    I^{(1)}_{am}(\vec q')=\sum_{\vec{\tilde p}}I^{(1)}_{am}(q_1')^{\abs{\vec {\tilde p}}}(q_2')^{\abs{\vec m}}\,.
\end{equation}
\begin{lem}
$ I^{(1)}(\vec q)$ and $I^{(1)}_{am}(\vec q')$ satisfy the following relations.
\begin{enumerate}
    \item When $N_2\geq N_0+2$, 
    \begin{equation}\label{eqn:lemtautflag1}
       I^{(1)}(\vec q)=I^{(1)}_{am}(\vec q')\,,
    \end{equation}
    via the variable change 
    \begin{equation}
        q_1'=q_1^{-1}, q_2'=q_2q_1\,.
    \end{equation}
    \item When $N_2=N_0+1$, 
       \begin{equation}\label{eqn:lemtautflag2}
       I^{(1)}(\vec q)=e^{(-1)^{N_1'}q_1}I^{(1)}_{am}(\vec q')\,,
    \end{equation}
    via the variable change
    \begin{equation}
         q_1'=q_1^{-1}, q_2'=q_2q_1\,. 
    \end{equation}
    \item When $N_2=N_0$, 
      \begin{equation}\label{eqn:N2=N0}
      I^{(1)}(\vec q)=(1+(-1)^{N_1'}q_1)^{\sum_{A=1}^{N_0}\eta_A-\sum_{F=1}^{N_2} \lambda_F+N_1'}I^{(1)}_{am}(\vec q')\,,
    \end{equation}
    via variable change 
    \begin{equation}\label{eqn:variablechangeN2=N0}
        q_1'=q_1^{-1},q_2=\frac{q_1q_2}{1+(-1)^{N_1'}q_1}\,.
    \end{equation}
In the above expression, the formula
\begin{equation}
    (1+(-1)^{N_1'}q_1)^{\sum_{A=1}^{N_0}\eta_A-\sum_{F=1}^{N_2} \lambda_F+N_1'}
\end{equation}
 is formally expanded as 
  \begin{equation}
   \sum_{m\geq 0}\frac{\prod_{l=0}^{m-1}(\sum_{A=1}^{N_0}\eta_A-\sum_{F=1}^{N_2}\lambda_{F}+N_1'-l)}{m!} ((-1)^{N_1'}q_1)^m\,.
 \end{equation}
\end{enumerate}
\end{lem}
\begin{proof}
$I^{(1)}(\vec q)$ can be transformed to,
\begin{align}
    I^{(1)}(\vec q)&=\sum_{\vec n\in \Z_{\geq 0}^{N_1}} i_{Q_0}^*I^{Gr, S^2}_{\vec n}q_1^{\abs{\vec n}+\sum_{I=1}^{N_1}m_{I}}q_2^{\abs{\vec m}}\nonumber\\
    &\times \prod_{I=1}^{N_1}\prod_{J=1}^{N_1'}
    \frac{\prod_{l\leq 0}(\lambda_I-\lambda_{N_1+J}+l)}{\prod_{l\leq m_I-m_{{N_1+J}}}(\lambda_I-\lambda_{N_1+J}+l)}
    \prod_{A=1}^{N_0}\prod_{J=1}^{N_1}\frac{\prod_{l\leq 0}(\eta_A-\lambda_J+l)}{\prod_{l\leq -m_J}(\eta_A-\lambda_J+l)}\,,
\end{align}
where $I^{Gr, S^2}_{\vec n}$ is the degree $\vec n$-term of the equivariant quasimap small $I$-function of $S_1^{\oplus N_0}\rightarrow Gr(N_1,N_2)$, and $Q_0=([N_1]\subseteq [N_2])$.
Similarly, 
\begin{align}
    I^{(1)}_{am}(\vec q')&=\sum_{\vec n\in \Z_{\geq 0}^{N_1'}} i^*_{P_0}I^{Gr^\vee, S^2}_{\vec n}(q_1')^{-\abs{\vec n}+\sum_{I=1}^{N_1'}m_{N_1+I}}(q_2')^{\abs{\vec m}}\nonumber\\
    &\times \prod_{I=1}^{N_1}\prod_{J=1}^{N_1'}
    \frac{\prod_{l\leq 0}(\lambda_I-\lambda_{N_1+J}+l)}{\prod_{l\leq m_I-m_{{N_1+J}}}(\lambda_I-\lambda_{N_1+J}+l)}
    \prod_{A=1}^{N_0}\prod_{J=1}^{N_1}\frac{\prod_{l\leq 0}(\eta_A-\lambda_J+l)}{\prod_{l\leq -m_J}(\eta_A-\lambda_J+l)}\,,
\end{align}
where $I^{Gr^\vee, S^2}_{\vec n}$ is the degree $\vec n$-term of the equivariant quasimap small $I$-function of $(S_1^\vee)^{N_0}\rightarrow Gr(N_1',N_2)$, and $P_0=\iota(Q_0)$.

When $N_2\geq N_0+1$, $I^{(1)}(\vec q)$ and $I^{(1)}_{am}(\vec q')$ satisfy the relations \eqref{eqn:lemtautflag1} and \eqref{eqn:lemtautflag2} by exactly the same arguments in Lemma \ref{lem:mainlemmaflag}.
When $N_2=N_0$, 
by Theorem \ref{thm:Haidong},
\begin{align}
    I^{(1)}(\vec q)&=(1+(-1)^{N_1'}q_1)^{\sum_{A=1}^{N_0}\eta_A-\sum_{F=1}^{N_2}\lambda_F+N_1'}\sum_{\vec n\in \Z_{\geq 0}^{N_1'}} i_{P_0}^*I^{G^\vee, S^2}_{\vec n}q_1^{\abs{\vec n}+\sum_{I=1}^{N_1}m_{I}}(\frac{q_2}{1+(-1)^{N_1'}q_1})^{\abs{\vec m}}
    \,\nonumber\\
    &\times \prod_{I=1}^{N_1}\prod_{J=1}^{N_1'}
    \frac{\prod_{l\leq 0}(\lambda_I-\lambda_{N_1+J}+l)}{\prod_{l\leq m_I-m_{{N_1+J}}}(\lambda_I-\lambda_{N_1+J}+l)}
    \prod_{A=1}^{N_0}\prod_{J=1}^{N_1}\frac{\prod_{l\leq 0}(\eta_A-\lambda_J+l)}{\prod_{l\leq -m_J}(\eta_A-\lambda_J+l)}\,\nonumber\\
    &=(1+(-1)^{N_1'}q_1)^{\sum_{A=1}^{N_0}\eta_A-\sum_{F=1}^{N_2}\lambda_F+N_1'} I^{(1)}_{am}(\vec q')\,,
\end{align}
with $q_1'=q_1^{-1},\, q_2'=\frac{q_1q_2}{1+(-1)^{N_1'}q_1}$.
Therefore we have proved the case for $N_0=N_2$.
\end{proof}
Notice that $R^{(1)}=R^{(1)}_{am}$ no matter what $\vec m$ is, and the above lemma holds for an arbitrary vector $\vec m\in \Z_{\geq 0}^{N_2}$. Hence  $i_{Q_0}^*I^{tF,S^2}(\vec q)$ and $i_{P_0}^*I^{\mc Z^4,S^2}(\vec q')$ satisfy the relations between $I^{(1)}(\vec q)$ and $I^{(q)}_{am}(\vec q')$.
Being restricting to any other pair of torus fixed points $Q$ and $P:=\iota(Q)$, $i^*_QI^{tF,S^2}(q_1,q_2)$ and $i^*_PI^{\mc Z^4,S^2}(q_1',q_2')$ still satisfy the relations in the above lemma. Hence, we have proved  Theorem \ref{thm:main2}.

\appendix
\addcontentsline{toc}{section}{Appendices}
\section{Proof of Theorem \ref{thm:Haidong}}
Define Pochhammer symbol $(a)_n=\prod_{i=0}^{n-1}(a+i)=\frac{\Gamma(a+n)}{\Gamma(a)}$. 
We first list some useful equations for Pochhammer symbol.
\begin{lem}
\begin{enumerate}
    \item For any integer $n\in \Z_{\geq 0}$,
    \begin{equation}\label{eqn:A1}
    (-a)_n=(-1)^{n}(a+1-n)_n\,.
    \end{equation}
    \item Assume $x_1$ and $x_2$ are distinct variables and $d_1,d_2\in Z_\geq 0$ are positive integers. Then we have the following equation,
    \begin{align}\label{eqn:A2}
        \frac{x_1-x_2+d_1-d_2}{(x_1-x_2)(x_1-x_2+1)_{d_1}(x_2-x_1+1)_{d_2}}=\frac{1}{(x_1-x_2-d_2)_{d_1}(x_2-x_1-d_1)_{d_2}}\,.
    \end{align}
    \item Suppose $a$ is a variable, and $m,\,l$ are positive integers, then
    \begin{equation}\label{eqn:A7}
   1+\frac{m-l}{a}=\frac{(a+1)_m(-a+1)_l}{(a-l)_m(-a-m)_l}\,.
    \end{equation}
\end{enumerate}
\end{lem}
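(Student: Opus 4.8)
The plan is to establish the three identities in the stated order, since (2) carries essentially all the content: (1) is a one-line reindexing, and (3) is a direct specialization of (2).

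For (1), I would reverse the order of the product: writing $(-a)_n=\prod_{i=0}^{n-1}(-a+i)=(-1)^n\prod_{i=0}^{n-1}(a-i)=(-1)^n\,a(a-1)\cdots(a-n+1)$, and observing that $a+1-n,\,a+2-n,\dots,a$ are exactly the factors of $(a+1-n)_n$ listed backwards, gives $(-a)_n=(-1)^n(a+1-n)_n$. Replacing $a$ by $-a$ also gives the equivalent form $(a)_n=(-1)^n(-a-n+1)_n$, which is the one I will use below.

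For (2), set $u=x_1-x_2$ and clear the denominator. Since $u\,(u+1)_{d_1}(-u+1)_{d_2}$ is a nonzero rational function of $u$, the assertion is equivalent to the polynomial identity (for fixed $d_1,d_2\in\Z_{\geq 0}$)
\[
(u+d_1-d_2)\,(u-d_2)_{d_1}\,(-u-d_1)_{d_2}=u\,(u+1)_{d_1}\,(-u+1)_{d_2},
\]
after which one divides back through. I would verify this by matching Pochhammer factors. On the right, $u\,(u+1)_{d_1}=(u)_{d_1+1}$ and, by (1), $(-u+1)_{d_2}=(-1)^{d_2}(u-d_2)_{d_2}$; the factors of $(u-d_2)_{d_2}$ run over $u-d_2,\dots,u-1$ and those of $(u)_{d_1+1}$ over $u,\dots,u+d_1$, so concatenating shows the right side equals $(-1)^{d_2}(u-d_2)_{d_1+d_2+1}$. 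On the left, $u+d_1-d_2$ is precisely the factor following the last factor $u-d_2+d_1-1$ of $(u-d_2)_{d_1}$, so $(u+d_1-d_2)(u-d_2)_{d_1}=(u-d_2)_{d_1+1}$; and, by (1) again, $(-u-d_1)_{d_2}=(-1)^{d_2}(u+d_1-d_2+1)_{d_2}$, whose factors $u-d_2+d_1+1,\dots,u+d_1$ continue those of $(u-d_2)_{d_1+1}$, so the left side also equals $(-1)^{d_2}(u-d_2)_{d_1+d_2+1}$. The two sides therefore agree.

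For (3), I would specialize (2) to $x_1-x_2=a$, $d_1=m$, $d_2=l$: it reads $\frac{a+m-l}{a\,(a+1)_m(-a+1)_l}=\frac{1}{(a-l)_m(-a-m)_l}$, and cross-multiplying, together with $a\,(a+1)_m=(a)_{m+1}$, yields $\frac{(a+1)_m(-a+1)_l}{(a-l)_m(-a-m)_l}=\frac{a+m-l}{a}=1+\frac{m-l}{a}$. The only slightly delicate part is the index bookkeeping inside the concatenation steps of (2): one must check that the successive arithmetic progressions of factors abut with neither gap nor overlap, and treat the degenerate cases $d_1=0$ or $d_2=0$ as empty products. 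This is routine rather than a genuine obstacle; alternatively one can dodge it entirely by writing $(a)_n=\Gamma(a+n)/\Gamma(a)$ and applying the reflection formula $\Gamma(z)\Gamma(1-z)=\pi/\sin\pi z$, so that the quotient of the two sides of (2) collapses (using $\sin\pi(z+1)=-\sin\pi z$) to $\frac{\sin\pi(u-d_2)\,\sin\pi(-u-d_1)}{\sin\pi(u+d_1-d_2+1)\,\sin\pi u}=(-1)^{2d_2}=1$.
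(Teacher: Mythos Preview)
Your proof is correct. The paper states this lemma without proof, treating the three Pochhammer identities as elementary facts to be used in the appendix computations; your argument supplies exactly the kind of direct verification the paper omits. The reindexing for (1), the concatenation-of-arithmetic-progressions argument for (2), and the observation that (3) is the specialization $u=a$, $d_1=m$, $d_2=l$ of (2) are all clean and accurate, and the alternative Gamma/reflection computation you sketch at the end also checks out.
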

\begin{proof}
     The first statement directly results from the definition, so we omit it.
     
     To prove the second statement, we firstly claim the following equation,
    \begin{align}\label{eqn:A4}
       \frac{x_1-x_2+d_1-d_2}{x_1-x_2}=(-1)^{d_1-d_2}\frac{\prod_{l\leq d_1-d_2}(x_1-x_2+l)}{\prod_{l\leq 0}(x_1-x_2+l)}
      \times\frac{\prod_{l\leq d_2-d_1}(x_2-x_1+l)}{\prod_{l\leq 0}(x_2-x_1+l)}\,.
    \end{align}
    The case for $d_1=d_2$ is trivial.
 Let $d_1>d_2$ without loss of generality, we have
    \begin{align}
        &\frac{\prod_{l\leq d_1-d_2}(x_1-x_2+l)}{\prod_{l\leq 0}(x_1-x_2+l)}
      \cdot\frac{\prod_{l\leq d_2-d_1}(x_2-x_1+l)}{\prod_{l\leq 0}(x_2-x_1+l)}=\frac{\prod_{1\leq l\leq d_1-d_2}(x_1-x_2+l)}{\prod_{d_2-d_1+1\leq l \leq 0}(x_2-x_1+l)}\nonumber\\
      &=(-1)^{d_1-d_2}\frac{\prod_{1\leq l\leq d_1-d_2}(x_1-x_2+l)}{\prod_{0\leq l \leq d_1-d_2-1}(x_1-x_2+l)}=(-1)^{d_1-d_2} \frac{x_1-x_2+d_1-d_2}{x_1-x_2}\,,
    \end{align}
    which proves the Equation \eqref{eqn:A4}.
Then the left hand side of \eqref{eqn:A2} is equal to 
    \begin{align}
       &(-1)^{d_1-d_2}\frac{\prod_{l\leq d_1-d_2}(x_1-x_2+l)}{\prod_{l\leq 0}(x_1-x_2+l)}
      \frac{\prod_{l\leq d_2-d_1}(x_2-x_1+l)}{\prod_{l\leq 0}(x_2-x_1+l)}\frac{\prod_{l\leq 0}(x_1-x_2+l)}{\prod_{l\leq d_1}(x_1-x_2+l)}
      \frac{\prod_{l\leq 0}(x_2-x_1+l)}{\prod_{l\leq d_2}(x_2-x_1+l)}\nonumber\\
      &=(-1)^{d_1-d_2}\frac{1}{\prod_{d_1-d_2+1\leq l
      \leq d_1}(x_1-x_2+l)}\frac{1}{\prod_{d_2-d_1+1\leq l\leq d_2}(x_2-x_1+l)}\nonumber\\
      &=(-1)^{d_1-d_2}\frac{1}{(x_1-x_2+d_1-d_2+1)_{d_2}}\frac{1}{(x_2-x_1+d_2-d_1+1)_{d_1}}\nonumber\\
      &=\frac{1}{(x_2-x_1-d_1)_{d_2}(x_1-x_2-d_2)_{d_2}}\,,
    \end{align}
where we have applied the Equation \eqref{eqn:A1} in the last step.

The third statement is a special situation of the second one by taking $x_1=a$, $x_2=0$ and $d_1=m,\,d_2=l$.
\end{proof}

The following will be devoted to proofs of the Theorem \ref{thm:Haidong}. We will use notations there. 
Consider an $S^2$-fixed point $Q=( \vec C_{[N_1]}\subset [N_2])$, and $P=\iota(Q)=(\{f_1'<\ldots<f_{N_1'}'\}\subset [N_2])$. 
Let $I^{Gr,S^2}_d$ and $I^{Gr^\vee, S^2}_d$ be the coefficient of $q^d$ in $i_Q^*(I^{Gr, S^2})$ and coefficient of 
$(q')^{-d}$ in $i_P^*(I^{Gr^\vee, S^2})$. 
We need to do some combinatorics to simplify them.
For $I^{Gr,S^2}_d$, we are able to rewrite its "abelianization translator" as follows
by the Equation \eqref{eqn:A4}, 
\begin{equation}
     \prod_{\substack{I,J=1\\I\neq J}}^{N_1}\frac{\prod_{\leq d_I-d_J}(\lambda_{f_I}-\lambda_{f_J}+l)}{\prod_{l\leq 0}(\lambda_{f_I}-\lambda_{f_J}+l)}
     =
    (-1)^{(N-1)\abs{\vec d}}
    \prod_{\substack{I,J=1\\I<J}}^{N_1}\frac{\lambda_{f_I}-\lambda_{f_J}+d_I-d_J}{\lambda_{f_I}-\lambda_{f_J}}\,.
\end{equation}
Hence,
\begin{align}\label{eqnA:IGr1}
    &I_d^{Gr, S^2}=\sum_{\abs{\vec d}=d}(-1)^{(N_1-1)d}
    \prod_{I<J}^{N_1}\frac{\lambda_{f_I}-\lambda_{f_J}+d_I-d_J}{\lambda_{f_I}-\lambda_{f_J}}
    \prod_{I=1}^{N_1}\frac{\prod_{A=1}^{N_0}\prod_{l=0}^{d_I-1}(-\lambda_{f_I}+\eta_A-l)}
    {\prod_{F=1}^{N_2}\prod_{l=1}^{d_I}(\lambda_{f_I}-\lambda_F+l)}\,\nonumber\\
    &=(-1)^{(N_1-1)d}\sum_{\abs{\vec d}=d}
    \prod_{I=1}^{N_1}
    \left(\prod_{I<J}^{N_1}\frac{\lambda_{f_I}-\lambda_{f_J}+d_I-d_J}{\lambda_{f_I}-\lambda_{f_J}}\frac{\prod_{A=1}^{N_0}\prod_{l=0}^{d_I-1}(-\lambda_{f_I}+\eta_A-l)}
    {\prod_{J=1}^{N_1}(\lambda_{f_I}-\lambda_{f_J}+1)_{d_I}\prod_{J=1}^{N_1'}(\lambda_{f_I}-\lambda_{f'_J}+1)_{d_I}}\right)\,.
\end{align}
Applying formula \eqref{eqn:A2}, we change $I_d^{Gr,S^2}$ to
\begin{align}
    I_d^{Gr,S^2}
    =\sum_{\abs{\vec d}=d}(-1)^{(N_1-1+N_0)d} \prod_{\substack{I,J=1\\I\neq J}}^{N_1}
    \frac{1}{(\lambda_{f_J}-\lambda_{f_I}-d_I)_{d_J}}\prod_{I=1}^{N_1}\frac{\prod_{A=1}^{N_0}(\lambda_{f_I}-\eta_A)_{d_I}}{d_I!\prod_{J=1}^{N_1'}(\lambda_{f_I}-\lambda_{f_J'}+1)_{d_I}}\,.
\end{align}
Similarly, we repeat the above procedure to 
\begin{equation}
    I_d^{Gr^\vee,S^2}=\sum_{\abs{\vec d}=d}
    \prod_{I\neq J}^{N_1'}\frac{\prod_{l\leq d_J-d_I}(\lambda_{f_I'}-\lambda_{f_J'}+l)}{\prod_{l\leq 0}(\lambda_{f_I'}-\lambda_{f_J'}+l)}
    \prod_{I=1}^{N_1'}\frac{\prod_{A=1}^{N_0}\prod_{l=1}^{d_I}(-\lambda_{f_I'}+\eta_A+l)}{ \prod_{F=1}^{N_2}\prod_{l=1}^{d_I}(-\lambda_{f_I'}+\lambda_F+l)}\,,
\end{equation}
and change $I^{Gr^\vee,S^2}_d$ to 
\begin{align}
    I_n^{Gr^\vee,S^2}=\sum_{\abs{\vec d}=d}
    (-1)^{(N_1'-1)d} \prod_{I\neq J=1}^{N_1'}
    \frac{1}{(\lambda_{f_J'}-\lambda_{f_I'}-d_J)_{d_I}}\prod_{I=1}^{N_1'}\frac{\prod_{A=1}^{N_0}(-\lambda_{f_I'}+\eta_A+1)_{d_I}}{d_I!\prod_{F=1}^{N_1}(\lambda_{f_F}-\lambda_{f_I'}+1)_{d_I}}\,.
    \end{align}
\begin{lem}
Suppose $N_2\geq N_0+2$, then
\begin{equation}\label{eqnA:lemma1}
  I^{Gr^\vee,S^2}_d=I^{Gr,S^2}_d\,.
\end{equation}
The above equation induces the first statement of Theorem \ref{thm:Haidong}.
\end{lem}
\begin{proof}
Define a meromorphic function,
\begin{align}\label{eqn:f}
    f(\vec \phi)=\prod_{\alpha=1}^{d}\prod_{\alpha<\beta}^{d}
    \frac{\phi_{\alpha}-\phi_{\beta}}{\phi_{\alpha}-\phi_{\beta}+1}
    \frac{\phi_\beta-\phi_\alpha}{\phi_\beta-\phi_\alpha+1}\prod_{\alpha=1}^{d}\frac{\prod_{A=1}^{N_0}(\eta_A-\phi_\alpha)}{\prod_{I=1}^{N_1}(\lambda_{f_I}-\phi_\alpha)\prod_{J=1}^{N_1'}(\phi_\alpha-\lambda_{f_{J}'}+1)}\,.
\end{align}
Suppose that 
\begin{equation} 
    0<Re\lambda_I<1,\,\, \lambda_I-\lambda_J\notin \Z, \text{ for } I\neq J\,.
\end{equation}
Note that $f(\vec \phi)$ is symmetric with respect to its variables. 
Let $C$ be the contour from $-i\infty$ to $i\infty$ and $C^-$ is in opposite direction, and then the following equation always holds,
\begin{equation}\label{eqn:prf1}
    \frac{1}{d!}\frac{1}{(2\pi i)^{d}}\int_{C}\cdots\int_{C}f(\vec \phi)d\phi_1\cdots d\phi_{d}=
    \frac{1}{d!}\frac{(-1)^{d}}{ (2\pi i)^{d}}\int_{C^-}\cdots\int_{C^-}f(\vec \phi)d\phi_1\cdots d\phi_{d}\,.
\end{equation}
Let $C_{R}^+$ be a semicircle in the left half-plane, and $C_{R}^-$ be a semicircle in the right half-plane and assume that both are centered at zero and have radius $R$.
Since $N_2\geq N_0+2$,  both $\int_{C_R-}f(\vec \phi)d\phi_i$ and $\int_{C_R^+}f(\vec \phi)d\phi_i$ approach to zero as $R$ goes to infinity. 
Hence the left-hand side of \eqref{eqn:prf1} is equal to the sum of residues of $f$ at all poles that are in the left half-plane, and the right-hand side of \eqref{eqn:prf1} is equal to the sum of residues at all poles that are in the right half-plane. 
Again since $N_2\geq N_0+2$,  we are free to exchange the integration order.

The poles on the right half plane can be classified as 
\begin{align}
    & \phi_\alpha=\lambda_{f_I}\,,\\
    &\phi_{\alpha}=\phi_{\beta}\,.
\end{align}
The above classification of poles gives a partition of the $d$. 
More explicitly, the poles of $f(\vec{\phi}_\alpha)$ in the right half plane are 
\begin{equation}\label{eqn:polesr}
   \{\lambda_{f_I}+n_I, \text{ for } I=1, \ldots, N_1, n_I=0,\ldots,d_I-1\}\,,
\end{equation}
with $d_1+\ldots+d_{N_1}=d$.
We can permute the pole of each variable $\phi_{\alpha}$ in the set \eqref{eqn:polesr}, and that is the reason why there is a factor $\frac{1}{d!}$ in front of the integral of \eqref{eqn:prf1}. 
Make a variable change 
\begin{equation}
    \phi_\alpha=\lambda_{f_I}+n_I+s_{n_I}^{I}\,,\, \text{for }\alpha=1,\ldots, d\,; \,
    I=1,\ldots,N_1\,;\, 
    n_I=0,\ldots,d_I-1\,.
\end{equation}
Then the right hand side of \eqref{eqn:prf1} is
\begin{subequations}
\begin{align}
   & {(-1)^d}\sum_{\abs{\vec d}=d}\int_{S_{\epsilon}^{\otimes d}}\prod_{I=1}^{N_1}\prod_{n_I=0}^{d_I-1}\frac{ds^{I}_{n_I}}{2\pi i}\\
   & \times \prod_{I\neq J}\prod_{n_I,n_J}\frac{\lambda_{f_I}-\lambda_{f_J}+n_I-n_J+s^{I}_{n_I}- s^{J}_{n_J}}{\lambda_{f_I}-\lambda_{f_J}+n_I-n_J+s^{I}_{n_I}- s^{J}_{n_J}+1}
     \prod_{I=1}^{N_1}\prod_{\substack{n_I,m_I=0,\\m_I\neq n_I}}^{d_I-1}\frac{n_I-m_I+s^{I}_{n_I}- s^{I}_{m_I}}{n_I-m_I+s^{I}_{n_I}- s^{I}_{m_I}+1}\label{subeqnA:int2}\\
   &\times \prod_{I=1}^{N_1}\prod_{n_I=0}^{d_I-1}
     \frac{\prod_{A=1}^{N_0}(\eta_A-\lambda_{f_I}-n_I-s^{I}_{n_I})}{\prod_{J=1}^{N_1}(\lambda_{f_I}-\lambda_{f_J}-n_I-s^I_{n_I})\prod_{K=1}^{N_1'}(\lambda_{f_I}-\lambda_{f'_K}+n_I+1+s^I_{n_I})}\,,\label{subeqnA:int3}
     \end{align}
\end{subequations}
where $S_{\epsilon}^{\otimes d}$ represent $d$ copies of circles of radius $\epsilon$ around the origin.
Each $s^I_{n_I}$ has a simple pole around the origin. 
When $m_I=n_I+1$ in \eqref{subeqnA:int2}, the factor $\prod_{I=1}^{N_1} \prod_{n_I=1}^{d_I} \frac{s^{I}_{n_I}- s^{I}_{n_I+1}-1}{s^{I}_{n_I}- s^{I}_{n_I+1}}$ admit poles, and when $I=J,\,n_I=0$ in \eqref{subeqnA:int3}, 
factors   $\prod_{I=1}^{N_1}\frac{1}{\lambda_{f_I}-\lambda_{f_J}-n_I-s^I_{n_I}} =\prod_{I=1}^{N_1}\frac{1}{-s^I_{0}}$ admit poles. Hence by splitting the holomorphic part, the above formula is
\begin{subequations}
\begin{align}
  & (-1)^d\sum_{\abs{\vec d}=d}\int_{S_{\epsilon}^{\otimes d}}\prod_{I=1}^{N_1}\frac{-1}{s^I_0} \prod_{n_I=0}^{d_I-1}\frac{s^{I}_{n_I}- s^{I}_{n_I+1}-1}{s^{I}_{n_I}- s^{I}_{n_I+1}} \frac{ds^{I}_{n_I}}{2\pi i} \label{subeqnA:int4}\\
   & \times \prod_{I\neq J}\prod_{n_I=0}^{d_I-1}\prod_{n_J=0}^{d_J-1}\frac{\lambda_{f_I}-\lambda_{f_J}+n_I-n_J+s^{I}_{n_I}- s^{J}_{n_J}}{\lambda_{f_I}-\lambda_{f_J}+n_I-n_J+s^{I}_{n_I}- s^{J}_{n_J}+1}
     \prod_{I=1}^{N_1}\prod_{\substack{n_I,m_I=0,\\m_I\neq n_I,\\m_I\neq n_I+1}}^{d_I-1}\frac{n_I-m_I+s^{I}_{n_I}- s^{I}_{m_I}}{n_I-m_I+s^{I}_{n_I}- s^{I}_{m_I}+1} \label{subeqnA:int5}\\
     &\times \prod_{I=1}^{N_1}\prod_{n_I=0}^{d_I-1}
     \frac{\prod_{A=1}^{N_0}(\eta_A-\lambda_{f_I}-n_I-s^{I}_{n_I})}
     {\prod_{J\neq I}^{N_1}(\lambda_{f_I}-\lambda_{f_J}-n_I-s^I_{n_I})\prod_{n_I=1}^{d_I-1}(-n_I-s^I_{n_I})\prod_{K=1}^{N_1'}(\lambda_{f_I}-\lambda_{f'_K}+n_I+1+s^I_{n_I})}\,.\label{subeqnA:int6}
\end{align}
\end{subequations}
One can check that the integral \eqref{subeqnA:int4} is $(-1)^{N_1}$, and the two rows \eqref{subeqnA:int5} and \eqref{subeqnA:int6} are holomorphic around $s^I_{n_I}=0$.
Therefore, after taking residues, the above integral is equal to 
\begin{subequations}
\begin{align}
  &(-1)^{d+N_1}\sum_{\abs{\vec d}=d}
   \prod_{I\neq J}\prod_{n_I=0}^{d_I-1}\prod_{n_J=0}^{d_J-1}\frac{\lambda_{f_I}-\lambda_{f_J}+n_I-n_J}{\lambda_{f_I}-\lambda_{f_J}+n_I-n_J+1}
     \prod_{I=1}^{N_1}\prod_{\substack{m_I,n_I=0,\\m_I\neq n_I,\\m_I\neq n_I+1}}^{d_I-1}\frac{n_I-m_I}{n_I-m_I+1} \label{subeqnA:int8}\\
     &\times \prod_{I=1}^{N_1}
     \frac{\prod_{n_I=0}^{d_I-1}\prod_{A=1}^{N_0}(\eta_A-\lambda_{f_I}-n_I)}
     {(-1)^{d_I-1}(d_I-1)!\prod_{J\neq I}^{N_1}(-1)^{d_I}(\lambda_{f_J}-\lambda_{f_I})_{d_I}\prod_{K=1}^{N_1'}(\lambda_{f_I}-\lambda_{f'_K}+1)_{d_I}}\,.\label{subeqnA:int9}
\end{align}
\end{subequations}
In the above formula, one can check that 
\begin{equation}
     \prod_{I\neq J}^{N_1}\prod_{n_I=0}^{d_I-1}\prod_{n_J=0}^{d_J-1}\frac{\lambda_{f_I}-\lambda_{f_J}+n_I-n_J}{\lambda_{f_I}-\lambda_{f_J}+n_I-n_J+1}
     =\prod_{I\neq J}\frac{(\lambda_{f_J}-\lambda_{f_I})_{d_J}}{(\lambda_{f_J}-\lambda_{f_I}-d_I)_{d_J}}\,,
\end{equation}
and 
\begin{equation}
    \prod_{I=1}^{N_1}\prod_{n_I=0}^{d_I-1}\prod_{\substack{m_I\neq n_I,\\m_I\neq n_I+1}}^{d_I}\frac{n_I-m_I}{n_I-m_I+1}=\prod_{I=1}^{N_1}\frac{1}{d_I}\,.
\end{equation}
Substituting the two formulae back to \eqref{subeqnA:int8} and \eqref{subeqnA:int9}, we can prove that it equals
\begin{equation}
    \sum_{\abs{\vec d}=d}(-1)^{(N_1-1+N_0)\abs{\vec d}} \prod_{I\neq J=1}^{N_1}
    \frac{1}{(\lambda_{f_J}-\lambda_{f_I}-d_I)_{d_J}}\prod_{I=1}^{N_1}\frac{\prod_{A=1}^{N_0}(\lambda_{f_I}-\eta_A)_{d_I}}{d_I!\prod_{J=1}^{N_1'}(\lambda_{f_I}-\lambda_{f_J'}+1)_{d_I}}\,,
\end{equation}
which is exactly $I^{Gr, S^2}_d$.

On the other hand, the poles of $f(\vec \phi)$ in the left half plane are 
\begin{align}\label{eqn:polesl}
   \{\phi_\alpha\}=\{\lambda_{f_I'}-n_I, \text{ for } I=1, \ldots, N_1', n_I=1,\ldots,d_I\}\,,
\end{align}
for all possible partition  $d=(d_1,\ldots, d_{N_1'})$ of $d$.
By mimicking the above procedure, one can prove that the left hand side of \eqref{eqn:prf1} equals $I^{Gr^\vee,S^2}_d$. Hence we have proved this lemma.
\end{proof}

\begin{cor}
When $N_2=N_0+1$,
\begin{equation}\label{eqn:N2=N01}
    I_d^{Gr,S^2}=\sum_{p=0}^d\frac{(-1)^{N_1'(d-p)}}{(d-p)!}I_p^{Gr^\vee,S^2}\,,
\end{equation}
 so we conclude
\begin{equation}
    I^{Gr,S^2}(q)=e^{(-1)^{N_1'}q}I^{Gr^\vee,S^2}(q^{-1})\,.
\end{equation}
\end{cor}
\begin{proof}
Denote $N_1'=N_2-N_1$. 
Consider instead the $I$-functions of $S_1^{\oplus N_0}\rightarrow Gr(N_1,N_2+1)$ and $(S_1^\vee)^{\oplus N_0}\rightarrow Gr(N_1'+1, N_2+1)$. 
By the above lemma, their $I$-functions are equal. Consider the pair of torus fixed points $Q$, $P=\iota (Q)$ such that the index $N_2+1$ is in $P$. That means $Q=(\{f_1,\ldots, f_{[N_1]}\}\subseteq [N_2+1])$ and $P=(\{f_1',\ldots, f_{[N_1']},N_2+1\})$. 
Denote $x:=\lambda_{N_2+1}$. 
The $I$-functions of $S_1^{\oplus N_0}\rightarrow Gr(N_1,N_2+1)$ and $(S_1^\vee)^{\oplus N_0}\rightarrow Gr(N_1'+1, N_2+1)$ satisfy the Equation \eqref{eqnA:lemma1}, and it can be written as follows, 
\begin{align}\label{eqnA:N2N01}
    &\sum_{\abs{\vec d}=d}(-1)^{(N_1-1+N_0)d} \prod_{I\neq J=1}^{N_1}
    \frac{1}{(\lambda_{f_J}-\lambda_{f_I}-d_I)_{d_J}}\prod_{I=1}^{N_1}\frac{\prod_{A=1}^{N_0}(\lambda_{f_I}-\eta_A)_{d_I}}{d_I!\prod_{J=1}^{N_1'}(\lambda_{f_I}-\lambda_{f_J'}+1)_{d_I}}\cdot
    \frac{1}{\prod_{I=1}^{N_1}(\lambda_{f_I}-x+1)_{d_I}}
    \nonumber\\
    =&\sum_{\abs{\vec n}+p=d}(-1)^{(N_1'-1)\abs{\vec n}} \prod_{I\neq J=1}^{N_1'}
    \frac{1}{(\lambda_{f_J'}-\lambda_{f_I'}-n_J)_{n_I}}\prod_{I=1}^{N_1'}\frac{\prod_{A=1}^{N_0}(-\lambda_{f_I'}+\eta_A+1)_{n_I}}{n_I!\prod_{J=1}^{N_1}(\lambda_{f_J}-\lambda_{f_I'}+1)_{n_I}}\nonumber\\
    &\times(-1)^{N_1'p+d-p} \prod_{I=1}^{N_1'}\frac{1}{(x-\lambda_{f_I'}-p)_{n_I}(\lambda_{f_I'}-x-n_I)_{p}}\frac{\prod_{A=1}^{N_0}(-x+\eta_A+1)_{p}}{p!\prod_{J=1}^{N_1}(\lambda_{f_J}-x+1)_{p
    }}\,.
\end{align}
Multiplying both sides by $(-x)^d$ and 
taking limit $x\rightarrow \infty$, the left hand side is 
\begin{equation}
    I^{Gr,S^2}_d\cdot \lim_{\lambda_{N_2+1}\rightarrow \infty} \frac{(-x)^{d}}{\prod_{I=1}^{N_1}(\lambda_{f_I}-x+1)_{d_I}}=I^{Gr,S^2}_d\,,
\end{equation}
where $I^{Gr,S^2}_d$ is the degree $d$-term of the equivariant small $I$-function of $S_1^{\oplus N_0}\rightarrow Gr(N_1,N_2)$ being restricted to $Q=(\{f_1,\ldots, f_{[N_1]}\}\subseteq [N_2])$.
Taking limit $x\rightarrow \infty$ to the right hand side of the Equation \eqref{eqnA:N2N01}, we get 
\begin{align}
    &\lim_{x\rightarrow \infty}\sum_{p+\abs{\vec n}=d} I^{Gr^\vee, S^2}_{\abs{\vec n}}\nonumber\\
    &\times \frac{(-1)^{N_1'p+d-p}(-x)^{d}}{\prod_{I=1}^{N_1'}(\lambda_{f_I'}-x-n_I)_{p}(x-\lambda_{f_I'}-p)_{n_{I}}}
    \frac{\prod_{A=1}^{N_0}(-\lambda_{N_2+1}+\eta_A+1)_{p}}{p!\prod_{J=1}^{N_1}(\lambda_{f_J}-x+1)_{p}}\nonumber\\
    &=
    \sum_{p=0}^{d}\frac{(-1)^{(N_1'p)}}{(p)!}I_{d-p}^{Gr^\vee,S^2}\,. 
\end{align}
Therefore we have proved the lemma. 
\end{proof}
The situation when $N_0=N_2$ is more involved. In the following,  $I^{Gr,S^2}_d$ will always represent the degree $d$-term of the $I$-function of $S_1^{\oplus N_0}\rightarrow Gr(N_1,N_2)$ and $I^{Gr^\vee,S^2}_d$ will represent the $I$-function of $(S_1^\vee)^{N_0}\rightarrow Gr(N_1',N_2)$ with $N_1'=N_2-N_1$.
\begin{lem}
When $N_0=N_2$,
\begin{align}
    I_d^{Gr,S^2}=\sum_{m=0}^d(-1)^{N_1'm}
   \begin{pmatrix}
   \sum_{A=1}^{N_0}\eta_A-\sum_{F=1}^{N_2}\lambda_F+N_1'\\
   m
   \end{pmatrix}
   I^{Gr^\vee,S^2}_{d-m}\,,
\end{align}
where 
\begin{equation}
    \begin{pmatrix}
   \sum_{A=1}^{N_0}\eta_A-\sum_{F=1}^{N_2}\lambda_F+N_1'\\
   m
   \end{pmatrix}=\frac{\prod_{l=0}^{m-1} \sum_{A=1}^{N_0}\eta_A-\sum_{F=1}^{N_2}\lambda_F+N_1'-l }{m!}\,.
\end{equation}
\end{lem}
\begin{proof}
We consider the $I$-functions of $S_1^{\oplus N_0}\rightarrow Gr(N_1,N_2+1)$ and $(S_1^\vee)^{\oplus N_0}\rightarrow Gr(N_1'+1,N_2+1)$ and they satisfy the relation  \eqref{eqn:N2=N01}. Again, we consider the restriction of the two $I$-functions to torus fixed points $Q$ and $P=\iota(Q)$ such that the index $N_2+1$ is in $P$. Denote $x:=\lambda_{N_2+1}$, and $y:=x^{-1}$.
When $y$ is small enough,  the  degree $d$-term of the $I$-function of $S_1^{\oplus N_0}\rightarrow Gr(N_1,N_2+1)$ can be expanded as power series of $y$ as follows,
\begin{align}\label{eqn:A8left}
    &\sum_{\abs{\vec d}=d}(-1)^{(N_1-1+N_0)\abs{\vec d}} \prod_{I\neq J=1}^{N_1}
    \frac{1}{(\lambda_{f_J}-\lambda_{f_I}-d_I)_{d_J}}\prod_{I=1}^{N_1}\frac{\prod_{A=1}^{N_0}(\lambda_{f_I}-\eta_A)_{d_I}}{d_I!\prod_{F\in \vec C^c}(\lambda_{f_I}-\lambda_F+1)_{d_I}}(-y)^d+O(y^{d+1})\nonumber\\
    &=I_d^{Gr,S^2}(-y)^d+O(y^{d+1})\,.
\end{align}
By splitting the factors involving $x$, the right hand side of the Equation \eqref{eqn:N2=N01} can be written as  
\begin{align}\label{eqn:A6}
   &\sum_{\abs{\vec n}+m=d}
   (-1)^{(N_1'-1)\abs{\vec n}}\prod_{I=1}^{N_1'}
   \frac{\prod_{A=1}^{N_0}(\lambda_{f'_I}-\eta_A+1)_{n_{I}}}{n_I!\prod_{J\neq I}(\lambda_{f_I'}-\lambda_{f_J'}-n_I)_{n_J}\prod_{F\in \vec C}(\lambda_F-\lambda_{f_I'}+1)_{n_I}}\nonumber\\
   &\times\frac{ (-1)^{d+N_1'm}}{m!}\sum_{q=0}^m(-1)^{q}C_m^q
   \prod_{I=1}^{N_1'}\frac{x-\lambda_{f_I'}+n_I-q}{(x-\lambda_{f_I'})_{n_I+1}}
   \frac{\prod_{A=1}^{N_0}(-x+\eta_A+1)_q}{\prod_{F=1}^{N_2}(\lambda_F-x+1)_q}\,,
\end{align}
where we have applied the formula \eqref{eqn:A7}.
Changing variable $x$ to $y^{-1}$, the second row of the formula \eqref{eqn:A6} appears as,
\begin{align}\label{eqn:A8right}
    \frac{1}{m!}\cdot 
   \frac{(-1)^{d+N_1'm}y^{d-m}Q_{m}(y, -\lambda_{f_I'}+n_I, \lambda_F, \eta_A)}{\prod_{I=1}^{N_1'}\prod_{k=0}^{n_I}\left(1-(\lambda_{f_I'}-k)y\right)\prod_{F=1}^{N_2}\prod_{k=1}^m\left(1-(\lambda_F+k)y\right)}
\end{align}
where 
\begin{align}
    Q_m(y, -\lambda_{f_I'}+n_I, \lambda_F, \eta_A)&=
    \sum_{q=0}^m(-1)^{q}C_m^q\prod_{I=1}^{N_1'}\left(1+(-\lambda_{f_I'}+n_I-q)y\right)\nonumber\\
    &\times
    \prod_{A=1}^{N_0}\prod_{k=1}^q(1-(\eta_A+k)y)\prod_{F=1}^{N_2}\prod_{k=q+1}^m\left(1-(\lambda_F+k)y\right)\,.
\end{align}
By \cite[Appendix B]{benini2015cluster}, 
\begin{align}\label{eqn:A8right2}
    Q_m(y, -\lambda_{f_I'}+n_I, \lambda_F, \eta_A)=\prod_{k=1}^{m}\left(-\sum_{F=1}^{N_2}\lambda_F+\sum_{A=1}^{N_0}\eta_A+N_1'-k\right)y^{m}+O(y^{m+1})\,.
\end{align}
Hence, the formula \eqref{eqn:A6} expanded as power series of $y$ is
\begin{equation}\label{eqn:A8rightf}
\sum_{\abs{\vec n}+m=d}I^{Gr^\vee,S^2}_{\abs{\vec n}}\frac{(-1)^{N_1'm}}{m!} \prod_{k=1}^{m}\left(-\sum_{F=1}^{N_2}\lambda_F+\sum_{A=1}^{N_0}\eta_A+N_1'-k\right)(-y)^d+O(y^{d+1})\,.
\end{equation}
Comparing coefficients of $(-y)^d$ of \eqref{eqn:A8left} and \eqref{eqn:A8rightf}, we get the relation below, 
\begin{align}\label{eqn:A9}
    I_d^{Gr,S^2}&=\sum_{\abs{\vec n}+m=d}(-1)^{N_1'm}
   \begin{pmatrix}
   \sum_{A=1}^{N_0}-\sum_{F=1}^{N_2}\lambda_F+N_1'\\
   m
   \end{pmatrix}I^{Gr^\vee,S^2}_{\abs{\vec n}}\,,
\end{align}
where we have used the formal expression 
\begin{equation}
    \begin{pmatrix}
   \sum_{A=1}^{N_0}-\sum_{F=1}^{N_2}\lambda_F+N_2-N_1\\
   m
   \end{pmatrix}
\end{equation}
to represent 
\begin{equation}
    \frac{\prod_{l=0}^{m-1}\left(\sum_{A=1}^{N_0}\eta_A-\sum_{F=1}^{N_2}\lambda_F+N_2-N_1-l\right)}{m!}\,.
\end{equation}
Therefore, we derive the third formula in Theorem \ref{thm:Haidong}.
\end{proof}

 \bibliographystyle{amsalpha}

\bibliography{Anmianreference}

\providecommand{\bysame}{\leavevmode\hbox to3em{\hrulefill}\thinspace}
\providecommand{\MR}{\relax\ifhmode\unskip\space\fi MR }
% \MRhref is called by the amsart/book/proc definition of \MR.
\providecommand{\MRhref}[2]{%
  \href{http://www.ams.org/mathscinet-getitem?mr=#1}{#2}
}
\providecommand{\href}[2]{#2}
\begin{thebibliography}{CFKM14}

\bibitem[AB84]{ATIYAH19841}
M.F. Atiyah and R.~Bott, \emph{The moment map and equivariant cohomology},
  Topology \textbf{23} (1984), no.~1, 1--28.

\bibitem[BCC11]{benini2011comments}
Francesco Benini, Cyril Closset, and Stefano Cremonesi, \emph{Comments on 3d
  {S}eiberg-like dualities}, Journal of High Energy Physics \textbf{2011}
  (2011), no.~10, 1--40.

\bibitem[BCFK05]{abalian/nonabelian:BCK3}
Aaron Bertram, Ionu\c{t} Ciocan-Fontanine, and Bumsig Kim, \emph{Two proofs of
  a conjecture of {H}ori and {V}afa}, Duke Math. J. \textbf{126} (2005), no.~1,
  101--136.

\bibitem[BCFK08]{abeliannonabelian:BCK2}
\bysame, \emph{Gromov-{W}itten invariants for abelian and nonabelian
  quotients}, J. Algebraic Geom. \textbf{17} (2008), no.~2, 275--294.
  \MR{2369087}

\bibitem[Beh97]{GW:B}
K.~Behrend, \emph{Gromov-{W}itten invariants in algebraic geometry}, Invent.
  Math. \textbf{127} (1997), no.~3, 601--617. \MR{1431140}

\bibitem[BF97]{GW:BFintrinsic}
K.~Behrend and B.~Fantechi, \emph{The intrinsic normal cone}, Invent. Math.
  \textbf{128} (1997), no.~1, 45--88. \MR{1437495}

\bibitem[BM96]{GW:BM}
K.~Behrend and Yu. Manin, \emph{Stacks of stable maps and {G}romov-{W}itten
  invariants}, Duke Math. J. \textbf{85} (1996), no.~1, 1--60. \MR{1412436}

\bibitem[BPZ15]{benini2015cluster}
Francesco Benini, Daniel~S Park, and Peng Zhao, \emph{Cluster algebras from
  dualities of 2d $\mathcal{N}=(2, 2)$ quiver gauge theories}, Communications
  in Mathematical Physics \textbf{340} (2015), no.~1, 47--104.

\bibitem[BSTV15]{MR3296161}
Giulio Bonelli, Antonio Sciarappa, Alessandro Tanzini, and Petr Vasko,
  \emph{Vortex partition functions, wall crossing and equivariant
  {G}romov-{W}itten invariants}, Comm. Math. Phys. \textbf{333} (2015), no.~2,
  717--760. \MR{3296161}

\bibitem[CCFK15]{MR3412343}
Daewoong Cheong, Ionu\c{t} Ciocan-Fontanine, and Bumsig Kim, \emph{Orbifold
  quasimap theory}, Math. Ann. \textbf{363} (2015), no.~3-4, 777--816.
  \MR{3412343}

\bibitem[CF95]{flag:quantumcohomology}
Ionut Ciocan-Fontanine, \emph{{Quantum cohomology of flag varieties}},
  International Mathematics Research Notices \textbf{1995} (1995), no.~6,
  263--277.

\bibitem[CFK10]{MR2729000}
Ionu\c{t} Ciocan-Fontanine and Bumsig Kim, \emph{Moduli stacks of stable toric
  quasimaps}, Adv. Math. \textbf{225} (2010), no.~6, 3022--3051. \MR{2729000}

\bibitem[CFK14]{MR3272909}
\bysame, \emph{Wall-crossing in genus zero quasimap theory and mirror maps},
  Algebr. Geom. \textbf{1} (2014), no.~4, 400--448. \MR{3272909}

\bibitem[CFK16]{MR3586512}
\bysame, \emph{Big {$I$}-functions}, Development of moduli theory---{K}yoto
  2013, Adv. Stud. Pure Math., vol.~69, Math. Soc. Japan, [Tokyo], 2016,
  pp.~323--347. \MR{3586512}

\bibitem[CFK17]{ciocan2017higher}
Ionu{\c{t}} Ciocan-Fontanine and Bumsig Kim, \emph{Higher genus quasimap
  wall-crossing for semipositive targets}, Journal of the European Mathematical
  Society \textbf{19} (2017), no.~7, 2051--2102.

\bibitem[CFK20]{ciocan2020quasimap}
\bysame, \emph{Quasimap wall-crossings and mirror symmetry}, Publications
  math{\'e}matiques de l'IH{\'E}S \textbf{131} (2020), no.~1, 201--260.

\bibitem[CFKM14]{MR3126932}
Ionu\c{t} Ciocan-Fontanine, Bumsig Kim, and Davesh Maulik, \emph{Stable
  quasimaps to {GIT} quotients}, J. Geom. Phys. \textbf{75} (2014), 17--47.
  \MR{3126932}

\bibitem[CFKS08]{abelian/nonabelian:CKB}
Ionu\c{t} Ciocan-Fontanine, Bumsig Kim, and Claude Sabbah, \emph{The
  abelian/nonabelian correspondence and {F}robenius manifolds}, Invent. Math.
  \textbf{171} (2008), no.~2, 301--343.

\bibitem[CJR17]{clader2017higher}
Emily Clader, Felix Janda, and Yongbin Ruan, \emph{Higher-genus quasimap
  wall-crossing via localization}, arXiv preprint arXiv:1702.03427 (2017).

\bibitem[CK99]{GW:mirrorsym}
David~A. Cox and Sheldon Katz, \emph{Mirror symmetry and algebraic geometry},
  Mathematical Surveys and Monographs, vol.~68, American Mathematical Society,
  Providence, RI, 1999. \MR{1677117}

\bibitem[Clo12]{closset2012seiberg}
Cyril Closset, \emph{Seiberg duality for {C}hern-{S}imons quivers and {D}-brane
  mutations}, Journal of High Energy Physics \textbf{2012} (2012), no.~3,
  1--43.

\bibitem[Don20]{donghai}
Hai Dong, \emph{{I}-funciton in {G}rassmannian {D}uality}, Ph.D Thesis of
  Peking university (2020).

\bibitem[DW22]{dong2020level}
Hai Dong and Yaoxiong Wen, \emph{Level correspondence of the {$K$}-theoretic
  {$I$}-function in {G}rassmann duality}, Forum Math. Sigma \textbf{10} (2022),
  Paper No. e44. \MR{4439781}

\bibitem[ESm89]{Ab_nab:EG}
Geir Ellingsrud and Stein~Arild Str\o~mme, \emph{On the {C}how ring of a
  geometric quotient}, Ann. of Math. (2) \textbf{130} (1989), no.~1, 159--187.
  \MR{1005610}

\bibitem[GK95]{givental1995quantum}
Alexander Givental and Bumsig Kim, \emph{Quantum cohomology of flag manifolds
  and toda lattices}, Communications in mathematical physics \textbf{168}
  (1995), no.~3, 609--641.

\bibitem[GLF16]{gomis2016m2}
Jaume Gomis and Bruno Le~Floch, \emph{M2-brane surface operators and gauge
  theory dualities in toda}, Journal of High Energy Physics \textbf{2016}
  (2016), no.~4, 1--111.

\bibitem[GP99]{GW:GPequiv}
T.~Graber and R.~Pandharipande, \emph{Localization of virtual classes}, Invent.
  Math. \textbf{135} (1999), no.~2, 487--518. \MR{1666787}

\bibitem[HK13]{HoriKnapp}
Kentaro Hori and Johanna Knapp, \emph{Linear sigma models with strongly coupled
  phases—one parameter models}, Journal of High Energy Physics \textbf{2013}
  (2013), no.~11, 1--74.

\bibitem[Hor13]{Hori}
Kentaro Hori, \emph{Duality in two-dimensional (2, 2) supersymmetric
  non-abelian gauge theories}, Journal of High Energy Physics \textbf{2013}
  (2013), no.~10, 1--76.

\bibitem[HT07]{HoriTong}
Kentaro Hori and David Tong, \emph{Aspects of non-abelian gauge dynamics in
  two-dimensional $\mathcal{N}=(2, 2)$ theories}, Journal of High Energy
  Physics \textbf{2007} (2007), no.~05, 079.

\bibitem[HZ]{starquiver:hezhang}
Weiqiang He and Yingchun Zhang, \emph{Seiberg duality of star shaped quivers
  and application to moduli space of parabolic bundles over $\mathbb{P}^1$}, In
  progress.

\bibitem[Kir05]{Kir}
Frances Kirwan, \emph{Refinements of the {M}orse stratification of the
  normsquare of the moment map}, The breadth of symplectic and {P}oisson
  geometry, Progr. Math., vol. 232, Birkh\"{a}user Boston, Boston, MA, 2005,
  pp.~327--362. \MR{2103011}

\bibitem[Kir16]{quiver}
Alexander Kirillov, Jr., \emph{Quiver representations and quiver varieties},
  Graduate Studies in Mathematics, vol. 174, American Mathematical Society,
  Providence, RI, 2016. \MR{3526103}

\bibitem[Kon95]{GW:kontsevich1995enumeration}
Maxim Kontsevich, \emph{Enumeration of rational curves via torus actions}, The
  moduli space of curves, Springer, 1995, pp.~335--368.

\bibitem[LT98]{GW:LT}
Jun Li and Gang Tian, \emph{Virtual moduli cycles and {G}romov-{W}itten
  invariants of algebraic varieties}, J. Amer. Math. Soc. \textbf{11} (1998),
  no.~1, 119--174. \MR{1467172}

\bibitem[Mar00]{Ab_nab:Mar}
Shaun Martin, \emph{Symplectic quotients by a nonabelian group and by its
  maximal torus}, arXiv preprint math/0001002 (2000).

\bibitem[Rua17]{nonabelianGLSM:YR}
Yongbin Ruan, \emph{Nonabelian gauged linear sigma model}, Chin. Ann. Math.
  Ser. B \textbf{38} (2017), no.~4, 963--984. \MR{3673177}

\bibitem[Wan19]{wang2019mirror}
Jun Wang, \emph{A mirror theorem for gromov-witten theory without convexity},
  arXiv preprint arXiv:1910.14440 (2019).

\bibitem[Web18]{abeliannonabelian:Webb}
Rachel Webb, \emph{The {A}belian-{N}onabelian {C}orrespondence for
  {I}-functions}, arXiv preprint arXiv:1804.07786 (2018).

\bibitem[Web21]{abelianizationlef:Webb}
\bysame, \emph{Abelianization and {Q}uantum {L}efschetz for {O}rbifold
  {Q}uasimap {I}-functions}, arXiv preprint arXiv:2109.12223 (2021).

\bibitem[Xie13]{xie2013three}
Dan Xie, \emph{Three dimensional {S}eiberg-like duality and tropical cluster
  algebra}, arXiv preprint arXiv:1311.0889 (2013).

\bibitem[Zho22]{zhou2019quasimap}
Yang Zhou, \emph{Quasimap wall-crossing for {GIT} quotients}, Invent. Math.
  \textbf{227} (2022), no.~2, 581--660. \MR{4372221}

\end{thebibliography}

\end{document}